\newtheorem{Theorem}{Theorem}[section]
\newtheorem{theorem}[Theorem]{Theorem}
\newtheorem{proposition}[Theorem]{Proposition}
\newtheorem{corollary}[Theorem]{Corollary}
\newtheorem{lemma}[Theorem]{Lemma}
\newtheorem{fact}[Theorem]{Fact}
\newtheorem{remark/def}[Theorem]{Remark/Definition}
\newtheorem{claim}[Theorem]{Claim}
\theoremstyle{definition}
\newtheorem{example}[Theorem]{Example}
\newtheorem{remark}[Theorem]{Remark}
\newtheorem{definition}[Theorem]{Definition}
\newtheorem{notation}[Theorem]{Notation}
\newtheorem{question}[Theorem]{Question}
\newtheorem{def/rem}[Theorem]{Definition/Remark}
\newtheorem{not/rem}[Theorem]{Notation/Remark}
\newsavebox{\indbin}
\savebox{\indbin}{\begin{picture}(0,0)
\newlength{\gnu}
\settowidth{\gnu}{$\smile$} \setlength{\unitlength}{.5\gnu}
\put(-1,-.65){$\smile$} \put(-.25,.1){$|$}
\end{picture}}
\def \indo {\mathop{\smile \hskip -0.9em ^| \ }}
\def \depo {\mathop{ \not \smile \hskip -0.9em  ^| \ }}
\newcommand{\be}{\begin{enumerate}}
\newcommand{\bd}{\begin{defn}}
\newcommand{\bt}{\begin{theorem}}
\newcommand{\bl}{\begin{lemma}}
\newcommand{\ee}{\end{enumerate}}
\newcommand{\ed}{\end{defn}}
\newcommand{\et}{\end{theorem}}
\newcommand{\el}{\end{lemma}}
\newcommand{\ov}{\overline}
\newcommand{\BN}{{\mathbb N}}
\newcommand{\id}{\operatorname{id}}
\newcommand{\Aut}{\operatorname{Aut}}
\def\stab{\operatorname{Stab}}
\def\dcl{\operatorname{dcl}}
\def\acl{\operatorname{acl}}
\def\o{\operatorname{o}}
\def\Acl{\operatorname{Acl}}
\def\mcl{\operatorname{\mu Acl}}
\def\mR{\operatorname{\mu R}}
\def\NM{\mathcal{NM}}
\newcommand{\rist}{\mathrm{rist}}
\newcommand{\grp}[1]{\langle #1\rangle}
\title{Geometric stability theory for $\mu$-structures}
\author[Junguk Lee]{Junguk Lee\\ \textnormal{With an appendix by Michael Cohen, and Phillip Wesolek}}
\address{Instytut Matematyczny, Uniwersytet Wrocławski\\ pl. Grunwaldzki 2/4\\        50-384 Wrocław, Poland}
\email{jlee@math.uni.wroc.pl}
\begin{document}

\begin{abstract}
We introduce a notion of $\mu$-structures which are certain locally compact group actions and prove some counterparts of results on Polish structures(introduced by Krupinski in \cite{Kru5}). Using the Haar measure of locally compact groups, we introduce an independence, called $\mu$-independence, in $\mu$-structures having good properties. With this independence notion, we develop geometric stability theory for $\mu$-structures. Then we see some structural theorems for compact groups which are $\mu$-structure. We also give examples of profinite structures where $\mu$-independence is different from $nm$-independence introduced by Krupinski for Polish structures.
\end{abstract}
\maketitle

\section{Introduction}\label{Introduction}
%%Introduce notions used through this article
%%summary of this paper
In \cite{N2,N1}, Newelski introduced topological objects called {\em profinite structures}. He introduced $m$-independence on profinite structures and using this independence, developed geometric stability theory on profinite structures. He also compared $m$-independence with another independence called $\mu$-independence. The notion of $m$-independence is defined topologically and the notion of $\mu$-independence is defined measure theoretically. In \cite{Kru5}, Krupinski generalized this topological independence notion to more wide contexts, called {\em Polish structures}. A {\em Polish structure} is a pair $(X,G)$ where $G$ is a Polish group acting faithfully on a set $X$ such that the stabilizer of each $x\in X$ is a closed subgroup of $G$. For a finite tuple $a$ of $X$ and $A\subset X$, we denote the orbit of $a$ under the action of the pointwise stabilizer  $G_A$ of $A$ by $\o(a/A)$, that is, $\o(a/A):=\{ga|\ g\in G_A\}$. There is a canonical surjective map $\pi_{A,a}:\ G_A\rightarrow \o(a/A)$ defined by $g\mapsto ga$. If there is no confusion, we write $\pi_A$ for $\pi_{A,a}$. For a Polish structure $(X,G)$, he defined a well-behaved independence, called {\em $nm$-independence}. For $A,B\subset X$ finite, we say a finite tuple $a$ of $X$ is {\em $nm$-independent} from $B$ over $A$ if $\pi_A^{-1}[\o(a/AB)]$ is non-meager in $\pi_A^{-1}[\o(a/A)]$, denoted by $a\indo^{nm}_A B$. This independence satisfies the following properties:
\begin{itemize}
	\item (Invariance) For all $g\in G$, $a\indo^{nm}_A B\Leftrightarrow ga\indo^{nm}_{g[A]}g[B]$.
	\item (Symmetry) $a\indo^{nm}_C b \Leftrightarrow b\indo^{nm}_C a$.
	\item (Transitivity) Assume $A\subset B\subset C$. Then $a\indo^{nm}_A B$ and $a\indo^{nm}_B C$ if and only if $a\indo^{nm}_A C$.
\end{itemize}
We call a $G$-set $X$ is {\em small} if for every finite set $A\subseteq X$, the action of the pointwise stabilizer $G_A$ on $X$ has countably many orbits.
\begin{itemize}
	\item (Extension) Suppose $(X,G)$ is small. Then there is $a'\in \o(a/A)$ such that $a'\indo^{nm}_A B$.
\end{itemize}
\noindent  We call the above $4$ properties {\bf the basic 4 axioms}. He developed model theory for Polish structures. He introduced a notion of imaginaries. For a given Polish structure $(X,G)$, after extending $X$ to $X^{eq}$ by adding imaginaries, the $G$-set $(X^{eq},G)$ is still a Polish structure and $nm$-independence is well-extended to $X^{eq}$. Using $nm$-independence, he defined the $nm$-rank on orbits of finite tuples of $X^{eq}$ over finite parameter sets from $X^{eq}$. A Polish structure having ordinal $nm$-ranks is called {\em $nm$-stable}. Using model theoretic method with $nm$-independence and the $nm$-rank, he gave a structure theorem for $nm$-stable Polish structures $(X,G)$ where $G$ acts on a compact group $X$ as homeomorphisms.

Here we introduce {\em $\mu$-structures} by considering locally compact group actions instead of Polish group actions and we define an independence relation on $\mu$-structures induced from the Haar measure on locally compact groups, which generalizes the notion of $\mu$-independence in profinite structures. We also call such independence for $\mu$-structures $\mu$-independence.
\begin{definition}\label{mu_structure}
A {\em $\mu$-structure} is a faithful $G$-action on a set $X$ where $G$ is {\bf  a compact group or a locally compact Polish group}, and for each $x\in X$, the stabilizer of $x$, $G_x:=\{g\in G|\ gx=x\}$ is closed in $G$.
\end{definition}
\noindent Note that in Definition \ref{mu_structure}, if $G$ is compact, it needs not be a Polish group. Roughly speaking, $a$ is `independent' from $B$ over $A$ if $\pi_A^{-1}[\o(a/AB)]$ is a `large' subset of $\pi_A^{-1}[\o(a/A)]$. In Polish structures, being large is measured by being non-meager. For $\mu$-structures, locally compact groups are equipped with the Haar measures and we can measure a size of a subset with the Haar measures. Using the Haar measure, we will introduce a ternary relation, called {\em $\mu$-independence}, and develop counterparts of some results on Polish structures. We will show that $\mu$-independence also satisfies the basic $4$ axioms. With $\mu$-independence, we will develop geometric stability theory for the $\mu$-structure, and see some structure theorems for $\mu$-stable structure $(X,G)$ where $G$ acts on a compact group $X$  as homeomorphisms. Also we compare two notions of $\mu$-independence and $nm$-independence in the case of compact structures. For its generality of Polish structure, there are many topological spaces which can be concerned as Polish structure(See \cite[Section 4]{Kru5}). Any profinite structure is interpretable in a first order theory(See \cite[Theorem 1.4]{Kru4}), and any profinite group is concerned as a profinite structure(See \cite[Example 1]{N2}). Note that $\mu$-structures generalize profinite structures because profinite groups are compact groups. It is interesting to find profinite groups acting on profinite spaces, which are small profinite structures. In Appendix, Cohen and Wesolek give a wide class of profinite groups, called {\em profinite branch groups}, which act on rooted trees. They showed that this group action induces a small profinite structure on the boundary of a rooted tree.

In Section \ref{Introduction}, we recall some basic facts on the Haar measure on locally compact groups and some notions for basic model theory for $\mu$-structures. In Section \ref{section mu-independence}, we will show that $\mu$-independence satisfies the basic $4$ axioms. Invariance, symmetry, and extension comes easily from the basic properties of the Haar measure, for example, the quasi-invariance and the $\sigma$-additivity. We will mainly focus on proving transitivity of $\mu$-independence. In Section \ref{Basic model theory for mu-structure}, we introduce the $\mu$-rank and $\mu$-stability for $\mu$-structures. In Section \ref{Small compact mu-groups}, we classify the small $\mu$-stable $\mu$-structures $(X,G)$ where $G$ acts on a compact group $X$ as homeomorphisms. In Section \ref{Examples}, we will give examples of non-small profinite structures where $nm$-independence and $\mu$-independence are different. More precisely, $\mu$-independence satisfies extension axiom but $nm$-independence does not.\\

%%summary of the contents by section
\medskip

Throughout this paper, we write $(X,G)$ for a $G$-set $X$. For a group $H$ and subgroups $H_1,H_2$ of $H$, we write $H_1H_2:=\{h_1h_2|\ h_1\in H_1,h_2\in H_2\}$, and $H_1^{-1}:=\{h^{-1}|\ h\in H_1\}$. For $A, B\subset X$ and $a=(a_1,\ldots,a_n)\in X^n$, we write $AB$ for $A\cup B$, and $aA$ or $Aa$ for $A\cup\{a_1,\ldots,a_n\}$. Also we write $a\in X$ for a finite tuple $a$ of elements in $X$. For a $\mu$-structure $(X,G)$ and $A\subset X$, we denote $G_A$ for the pointwise sabilizer of $A$ and $\mu_A$ for the Haar measure on $G_A$. Let $\phi:\ G\times X\rightarrow X$ be the action of $G$ on $X$. For $g\in G$ and for $a\in X$ and $A\subset X$, we write $ga$ and $g[A]$ for $\phi(g,a)$ and $\phi(g,A)$ respectively. Through this paper, we consider only a Hausdorff space.

We first recall some properties of the Haar measure on locally compact groups. Let $G$ be a locally compact group. Since $G$ is locally compact, it is equipped with a Borel measure $\mu$(at least, there is the Haar measure). We recall basic notions and facts on Borel measures on locally compact spaces(c.f. \cite{DE,KT}). 
\begin{definition}\cite{DE}\label{Borel_Radon_measure}
Let $X$ be a locally compact space.
\begin{enumerate}
	\item The {\em Borel $\sigma$-algebra} $\mathfrak{B}(X)$ on $X$ is the $\sigma$-algebra generated by open subsets of $X$.
	\item A {\em Borel measure} is a countably additive non-negative measure $\mu: \mathfrak{B}(X)\rightarrow [0,+\infty]$.
\end{enumerate}
\end{definition}
%\begin{remark}\label{a result_sigma_additivity}
%Let $\mu$ be a Borel measure on a compact space $X$. Suppose $X=\bigcup_{i\in \omega}X_i$ with each $X_i\in \mathfrak{B}(X)$. Then by $\sigma$-additivity, there is $i_0\in \omega$ such that $\mu(X_{i_0})>0$.
%\end{remark}
\begin{definition}\cite{DE}\label{Radon_measure}
Let $X$ be a locally compact space. Let $\mu$ be a Borel measure on $X$.
\begin{enumerate}
	\item We say $\mu$ is {\em locally finite} if for any $x\in X$, there is an open neighborhood $U$ of $x$ such that $\mu(U)<\infty$.
	\item We say $\mu$ is {\em regular} if for any Borel set $E$ of $X$, 
	\begin{align*}
	\mu(E)&=\inf \{\mu(U)|\ E\subset U,\ U\mbox{ open}\}\\
	&=\sup\{\mu(K)|\ K\subset E,\ K\mbox{ compact}\}.
\end{align*}		
	\item We say $\mu$ is a {\em Rodon measure} if
	\begin{enumerate}
		\item $\mu(K)<\infty$ for any compact set $K\subset X$,
		\item $\mu(E)=\inf \{\mu(U)|\ E\subseteq U,\ U\mbox{ open}\}$ for $E\in \mathfrak{B}(X)$, and
		\item $\mu(U)=\sup\{\mu(K)|\ K\subseteq U,\ K\mbox{ compact}\}$ for $U\subseteq X$ open.
\end{enumerate}
\end{enumerate}
\end{definition}

\noindent Next we define {\em push-forward} and {\em push-out} measures. 
\begin{definition}
Let $X$ and $Y$ be locally compact spaces. We say a continuous map $f:\ X\rightarrow Y$ is {\em Borel*} if $f(A)\in \mathfrak{B}(Y)$ for each $A\in \mathfrak{B}(X)$.
\end{definition}
\noindent By the Lusin-Suslin Theorem, any injective continuous map between Polish spaces is Borel*.
\begin{theorem}[The Lusin-Suslin Theorem]\cite{Ke}\label{Lusin-Suslin}
Let $f:\ X\rightarrow Y$ be a continuous map between locally compact Polish spaces. For each $A\in \mathfrak{B}(X)$, if $f\restriction A$ is injective, then $f(A)$ is in $\mathfrak{B}(Y)$. 
\end{theorem}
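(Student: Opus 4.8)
The statement is the classical Lusin--Suslin theorem, so the plan is to reproduce the descriptive-set-theoretic argument under the present hypotheses. First I would reduce to the case $A=X$ with $f$ globally injective: since $A\in\mathfrak{B}(X)$, there is a finer Polish topology on $X$ with the same Borel sets in which $A$ is clopen, and $f$ stays continuous for this finer topology (refining the domain only helps continuity). Passing to the clopen, hence Polish, subspace $A$, it suffices to prove that if $Z$ and $Y$ are Polish and $f\colon Z\to Y$ is continuous and injective, then $f(Z)\in\mathfrak{B}(Y)$. Two observations then drive everything: $f(Z)$ is analytic, being a continuous image of a Polish (in particular Borel) set, and likewise each $f(B)$ with $B\in\mathfrak{B}(Z)$ is analytic; and injectivity forces $f$ to commute with intersections, $f\bigl(\bigcap_n B_n\bigr)=\bigcap_n f(B_n)$, so in particular it carries disjoint sets to disjoint sets.

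The heart of the proof is a Luzin scheme. Fixing complete compatible metrics on $Z$ and $Y$, I would construct Borel sets $(Z_s)_{s\in\BN^{<\BN}}$ with $Z_\emptyset=Z$, $Z_s=\bigsqcup_n Z_{s\frown n}$, and both $\mathrm{diam}(Z_s)$ and $\mathrm{diam}(f(Z_s))$ at most $2^{-|s|}$; such a scheme exists because at each stage one may cover $Z_s$ by countably many Borel sets that are small in $Z$ and whose $f$-images are small in $Y$, and then disjointify. After a further refinement of the Polish topology on $Z$ making all the $Z_s$ clopen, their closures nest, $\overline{Z_{s\frown n}}\subseteq Z_s$. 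Since $f$ is injective, for each $s$ the images $\{f(Z_{s\frown n})\}_n$ are pairwise disjoint analytic sets, so the generalized Luzin separation theorem yields pairwise disjoint Borel sets $V_{s\frown n}\supseteq f(Z_{s\frown n})$; intersecting downward I arrange $V_{s\frown n}\subseteq V_s$, and by shrinking each into a small neighbourhood of $f(Z_{s\frown n})$ I keep $\mathrm{diam}(V_s)\le 2^{-|s|}$, setting $V_s=\emptyset$ whenever $Z_s=\emptyset$.

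Finally I would identify $f(Z)=\bigcap_{m}\bigcup_{|s|=m}V_s$, which is manifestly Borel. The inclusion $\subseteq$ is clear, since every $x\in Z$ determines a branch $\alpha$ with $f(x)\in V_{\alpha\restriction m}$ for all $m$. For $\supseteq$, a point $y$ in the right-hand side lies in exactly one $V_s$ at each level by disjointness, and these cohere into a branch $\alpha$; each $Z_{\alpha\restriction m}$ is then nonempty, the nested closed sets $\overline{Z_{\alpha\restriction m}}$ have vanishing diameter, so by completeness they meet in a single point $x\in\bigcap_m Z_{\alpha\restriction m}\subseteq Z$, and $f(x)=y$ because $\mathrm{diam}(V_{\alpha\restriction m})\to 0$ with $y,f(x)\in V_{\alpha\restriction m}$. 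I expect the main obstacle to be the generalized Luzin separation theorem together with the bookkeeping it requires: one must simultaneously maintain disjointness of the separating Borel sets across each level, their downward nesting, and the vanishing of their diameters in $Y$, and the separation theorem itself rests on Suslin's theorem that a set which is both analytic and co-analytic is Borel. The local compactness hypothesis is inessential here---the result is genuinely about Polish spaces---though it does permit the preliminary simplification of writing $X=\bigcup_n K_n$ with $K_n$ compact and treating each $f(A\cap K_n)$ separately.
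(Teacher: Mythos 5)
The paper offers no proof of this statement at all—it is quoted verbatim from Kechris's book [Ke]—so the only comparison available is with the classical proof in that reference, and your proposal is a correct reconstruction of exactly that argument: reduce via a finer Polish topology to a continuous injection on a Polish space, build a Luzin scheme with diameter control in both domain and range, separate the pairwise disjoint analytic images level-by-level using the countable Luzin separation theorem, and identify $f(Z)=\bigcap_m\bigcup_{|s|=m}V_s$. One small repair: the further refinement making all $Z_s$ clopen should be dropped, since the complete metric witnessing the vanishing diameters is the original one and Cantor's intersection argument cannot mix the two topologies; instead take closures in the original topology, so the nested sets $\overline{Z_{\alpha\restriction m}}$ meet in a single point $x\in Z$ with $f(x)\in\overline{f(Z_{\alpha\restriction m})}\subseteq\overline{V_{\alpha\restriction m}}$, which together with $y\in V_{\alpha\restriction m}$ and $\mathrm{diam}(V_{\alpha\restriction m})\to 0$ still forces $y=f(x)$.
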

\begin{remark/def}\label{push-forward_out_measure}
Let $f:\ X\rightarrow Y$ be a continuous map between locally compact spaces. Let $\mu_X$ and $\mu_Y$ be Borel measures on $X$ and $Y$ respectively.
\begin{enumerate}
	\item The composition $\mu_X\circ f^{-1}$ gives a Borel measure on $Y$. We call this measure the {\em push-forward measure} by $f$ and denote by $f_*(\mu_X)$.
	\item Suppose $f$ is Borel*. The composition $\mu_Y \circ f$ gives a Borel measure on $X$. We call this measure the {\em push-out measure} by $f$ and denote by $f^*(\mu_Y)$. 
\end{enumerate}
\end{remark/def}

\begin{remark}\cite{Bour}\label{Polish_Radon_regular}
Since a Polish space is strongly Radon, any locally finite Borel measure on a locally compact Polish space $X$ is a regular Borel measure.
\end{remark}

\begin{lemma}\label{push-out_regular_measure}
Let $f:\ X\rightarrow Y$ be a injective continuous map between locally compact Polish spaces. Let $\mu_Y$ be a locally finite Borel measure on $Y$. Then $f^*(\mu_Y)$ is a regular Borel measure on $X$.
\end{lemma}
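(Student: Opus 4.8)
The plan is to reduce the assertion to a single local-finiteness check, after which regularity is automatic. Since $X$ is a locally compact Polish space, Remark \ref{Polish_Radon_regular} tells us that every locally finite Borel measure on $X$ is already regular; so once I know that $f^*(\mu_Y)$ is a locally finite Borel measure, the lemma follows immediately. Thus the work splits into two parts: (i) checking that $f^*(\mu_Y)$ is a well-defined Borel measure, and (ii) proving that it is locally finite.

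For (i), I would first note that $f$ is Borel*: as $f$ is injective and continuous between locally compact Polish spaces, for any $A \in \mathfrak{B}(X)$ the restriction $f \restriction A$ is injective, so Theorem \ref{Lusin-Suslin} gives $f(A) \in \mathfrak{B}(Y)$. Hence $f^*(\mu_Y)(A) := \mu_Y(f(A))$ is defined on all Borel sets, in accordance with Remark/Definition \ref{push-forward_out_measure}. Non-negativity and $f^*(\mu_Y)(\emptyset) = 0$ are clear, and countable additivity is where injectivity is essential: for a pairwise disjoint Borel family $(A_n)_n$, injectivity forces the images $f(A_n)$ to be pairwise disjoint, so $f\big(\bigcup_n A_n\big) = \bigsqcup_n f(A_n)$ and the $\sigma$-additivity of $\mu_Y$ passes through.

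For (ii), fix $x \in X$. Using local finiteness of $\mu_Y$, pick an open neighborhood $V \ni f(x)$ with $\mu_Y(V) < \infty$. By continuity of $f$, the set $U := f^{-1}(V)$ is an open neighborhood of $x$, and since $f(U) \subseteq V$, monotonicity of $\mu_Y$ gives $f^*(\mu_Y)(U) = \mu_Y(f(U)) \le \mu_Y(V) < \infty$. This is exactly local finiteness in the sense of Definition \ref{Radon_measure}, and Remark \ref{Polish_Radon_regular} then upgrades it to regularity, completing the proof.

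I do not expect a genuine obstacle here; the one structural point to emphasize is that injectivity of $f$ is used twice and in an essential way---via Theorem \ref{Lusin-Suslin} even to define the push-out, and via disjointness of images to obtain countable additivity---while regularity itself is never checked by hand but rather inherited from the Polish and locally compact structure of $X$ through Remark \ref{Polish_Radon_regular}.
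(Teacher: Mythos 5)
Your proof is correct and takes essentially the same route as the paper: both reduce the lemma to showing that $f^*(\mu_Y)$ is a locally finite Borel measure (local finiteness obtained exactly as you do, by pulling back a finite-measure open neighborhood of $f(x)$ and using $f(f^{-1}[U])\subseteq U$), and then invoke Remark \ref{Polish_Radon_regular} to conclude regularity. The only difference is that you verify by hand that $f^*(\mu_Y)$ is a Borel measure---Borel* via Theorem \ref{Lusin-Suslin} and countable additivity via disjointness of images under the injective $f$---where the paper simply cites Remark/Definition \ref{push-forward_out_measure}; this extra care is harmless and usefully makes explicit where injectivity is needed.
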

\begin{proof}
By Remark/Definition \ref{push-forward_out_measure}, the push-out measure $f^*(\mu_Y)$ is a Borel measure. We show that the measure $f^*(\mu_Y)$ is locally finite. Take $x\in X$. Since $\mu_Y$ is locally finite, there is an open neighborhood $U$ of $f(x)$ with $\mu_Y(U)<\infty$. The open neighborgood $f^{-1}[U]$ of $x$ has the finite measure $f^*(\mu_Y)$. Therefore $f^*(\mu_Y)$ is a locally finite Borel measure on $X$ and it is regular by Remark \ref{Polish_Radon_regular}.
\end{proof}

\begin{definition}\cite{DE}\label{def_invariant}
Let $G$ be a group acting on a locally compact space $X$, and let $\mu$ be a Borel measure on $X$. For $S\subset X$ and $g\in G$, we write $gS:=\{gs|s\in X\}$ and $Sg:=\{sg|\ s\in X\}$. We say $\mu$ is {\em ($G$-)left-invariant[respectively, right-invariant]} if for any Borel set $E$ of $X$ and $g\in G$, $\mu(gS)=\mu(S)$[respectively, $\mu(Sg)=\mu(S)$]. We say $\mu$ is {\em ($G$-)invariant} if it is left and right invariant.
\end{definition}

\begin{definition}\cite{DE}\label{quasi_inv}
Let $G$ be a group acting on a locally compact space $X$. A {\em quasi-invariant measure}(with respect to $G$) $\mu$ on $X$ is a regular Borel measure such that for $g\in G$ and a Borel set $Y$ of $X$, there is a non-zero $\Delta(g)$ such that $\mu(gX)=\Delta(g)\mu(X)$.
\end{definition}

\begin{remark/def}\cite[Theorem 1.3.5]{DE}\label{Haar_measure}
Let $G$ be a locally compact group. Then there is a unique(up to scaling) non-zero left-invariant regular Radon measure $\mu$. We call this measure the (left-invariant) {\em the Haar measure} on $G$. So there is a quasi-invariant measure on a locally compact group.
\end{remark/def}

\begin{fact}\cite[Corollary 1.23]{KT}\label{quasi_inv_and_equivalence}
Let $G$ be a locally compact group and let $H$ be a closed subgroup of $G$. Let $\nu_1$ and $\nu_2$ be two quasi-invariant measures on $G/H$. Then $\nu_1$ and $\nu_2$ are equivalent, that is, they have the same null sets.
\end{fact}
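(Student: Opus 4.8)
The plan is to compare the two measures indirectly, by relating null sets on $G/H$ to null sets on $G$ itself, where the Haar measure of Remark/Definition \ref{Haar_measure} provides a single canonical yardstick. Write $q\colon G\rightarrow G/H$ for the quotient map and $\lambda_G$ for the left Haar measure on $G$. The heart of the argument is the following claim: for \emph{any} quasi-invariant measure $\nu$ on $G/H$ and any Borel set $E\subseteq G/H$, one has $\nu(E)=0$ if and only if $\lambda_G(q^{-1}[E])=0$. Granting this, the conclusion is immediate, since the condition ``$\lambda_G(q^{-1}[E])=0$'' makes no reference to $\nu$ whatsoever: $\nu_1$ and $\nu_2$ must then have exactly the same null sets, i.e.\ they are equivalent.

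To prove the claim I would use Weil's integration formula. Associated to a quasi-invariant $\nu$ there is a strictly positive continuous rho-function $\rho\colon G\rightarrow(0,\infty)$, transforming by the ratio of the modular functions of $H$ and $G$, for which
\[
\int_G f(g)\rho(g)\,d\lambda_G(g)=\int_{G/H}\Bigl(\int_H f(gh)\,d\lambda_H(h)\Bigr)\,d\nu(gH)
\]
holds for all $f\in C_c(G)$, and then for all nonnegative Borel $f$ by monotone convergence, where $\lambda_H$ is the Haar measure on $H$. Fix $E$ and put $A:=q^{-1}[E]$, which is right-$H$-invariant. Feeding $f=\chi_A\cdot u$ with $0\le u\in C_c(G)$ into the formula, the inner integral collapses to $\int_H u(gh)\,d\lambda_H(h)$ when $gH\in E$ and to $0$ otherwise, so the right-hand side equals $\int_E\bigl(\int_H u(gh)\,d\lambda_H(h)\bigr)\,d\nu(gH)$. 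If $\nu(E)=0$ this vanishes for every such $u$; since $\rho>0$, letting $u$ exhaust $G$ forces $\lambda_G(A)=0$. Conversely, if $\lambda_G(A)=0$ the left-hand side is $0$ for all $u$, and choosing $u$ so that $g\mapsto\int_H u(gh)\,d\lambda_H(h)$ is everywhere positive on a prescribed compact part of $G/H$ (a standard partition-of-unity construction) forces $\nu$ to vanish there; exhausting $G/H$ gives $\nu(E)=0$. This establishes the claim, and hence the equivalence.

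The main obstacle is everything packaged into Weil's formula: the existence of the rho-function $\rho$, its \emph{continuity and strict positivity}, and the fact that \emph{every} quasi-invariant measure (not merely one distinguished measure) arises from such a $\rho$, which is exactly what supplies a two-sided comparison. Establishing this requires disintegrating $\lambda_G$ along the cosets of $H$, for which one typically invokes the existence of Borel (or local continuous) cross-sections of $q$ together with regularity of the measures involved (Lemma \ref{push-out_regular_measure} and Remark \ref{Polish_Radon_regular} are in the right spirit here). I would also remark that the relatively invariant measures of Definition \ref{quasi_inv}, with multiplier $\Delta(g)\ne 0$, are quasi-invariant in the usual sense, since $\Delta(g)>0$ means $\nu$ and its $g$-translate share their null sets; so the argument above applies to them verbatim.
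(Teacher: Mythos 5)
First, a point of comparison: the paper does not prove this statement at all --- it is imported as a Fact with a citation to Kaniuth--Taylor, so there is no internal proof to measure your argument against; it has to stand on its own.

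Your overall strategy is the right (and the standard) one: show that for \emph{every} quasi-invariant $\nu$ on $G/H$ the $\nu$-null Borel sets are exactly the sets $E$ with $\lambda_G(q^{-1}[E])=0$, so that the Haar measure of Remark/Definition \ref{Haar_measure} is a common yardstick and equivalence follows. The genuine gap is in how you implement it: you apply Weil's formula \emph{to $\nu$ itself}, which presupposes that every quasi-invariant measure on $G/H$ arises from a continuous, strictly positive rho-function. The paper's Theorem \ref{Weil_integration_formula} supplies only \emph{one} distinguished measure $\mu$ with this property; the assertion that an arbitrary quasi-invariant $\nu$ admits such a $\rho$ is a substantially harder theorem (Mackey's analysis via Borel cross-sections), and in the standard treatments it is \emph{deduced from} the equivalence of quasi-invariant measures rather than used to prove it. You explicitly flag this as ``the main obstacle,'' but flagging it does not fill it: as written, the key ingredient of your proof is at least as strong as the conclusion, so the argument is circular in substance.

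The standard way to close the gap avoids rho-functions for $\nu$ entirely and uses only quasi-invariance plus Tonelli/Fubini. Given Borel $E\subseteq G/H$, put $A:=q^{-1}[E]$ and integrate the Borel function $(g,xH)\mapsto \chi_E(gxH)$ over $G\times G/H$ against $\lambda_G\times\nu$ (both are $\sigma$-finite here, since $G$ is $\sigma$-compact and $\nu$ is a regular Borel measure). Integrating first in $g$, the inner integral is $\lambda_G(\{g\mid gx\in A\})=\lambda_G(Ax^{-1})$, and right translates preserve $\lambda_G$-null sets (modular function), so this side vanishes identically iff $\lambda_G(A)=0$, using $\nu\neq 0$. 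Integrating first in $xH$, the inner integral is $\nu(g^{-1}\cdot E)$, and quasi-invariance makes $\nu(g^{-1}E)=0$ for some $g$ equivalent to $\nu(E)=0$, so this side vanishes iff $\nu(E)=0$. Equating the two orders of integration gives precisely your claim, simultaneously for all quasi-invariant $\nu$, with no disintegration or cross-sections needed. With this replacement your proof is complete; the Weil-formula computation you carried out remains correct, but is only needed for the one distinguished measure of Theorem \ref{Weil_integration_formula}, where the rho-function genuinely exists. (Your closing remark is fine: the paper's Definition \ref{quasi_inv} describes relatively invariant measures, and since measures are non-negative, a non-zero multiplier $\Delta(g)$ is automatically positive, so such measures are quasi-invariant in the usual sense and the argument applies to them.)
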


\begin{theorem}[Weil's integration formula]\cite[Corollary 1.21]{KT}\label{Weil_integration_formula}
Let $H$ be a closed subgroup of a locally compact group $G$. Let $\mu_G$, $\mu_H$ be the Haar measures on $G$ and $H$ respectively. Then there are a quasi-invariant regular Borel measure $\mu$ on $G/H$ and  a continuous strictly positive function $\rho$ on $G$ such that for any integrable function $f$ on $G$,
$$\int_G f(x)\rho(x)d\mu_G(x)=\int_{G/H}\int_H f(xh)d\mu_H(h)d\mu(xH).$$
\end{theorem}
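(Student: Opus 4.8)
The plan is to follow the classical construction from abstract harmonic analysis: realize the right-hand side as a positive linear functional on $C_c(G/H)$ and invoke the Riesz representation theorem to produce $\mu$. First I would introduce the modular functions $\Delta_G$ and $\Delta_H$ and construct the weight $\rho$ explicitly. Using that $G$ is $\sigma$-compact (which holds under the standing hypothesis that $G$ is compact or locally compact Polish), I would choose a nonnegative $g\in C_c(G)$ whose $H$-averages never vanish and set
$$\rho(x):=\int_H g(xh)\frac{\Delta_G(h)}{\Delta_H(h)}\,d\mu_H(h).$$
A direct substitution $h\mapsto h_0 h$ inside the integral, using left-invariance of $\mu_H$ and multiplicativity of the modular functions, then shows $\rho(xh_0)=\frac{\Delta_H(h_0)}{\Delta_G(h_0)}\rho(x)$; continuity and strict positivity of $\rho$ follow from the choice of $g$.

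Next I would study the averaging map $Q\colon C_c(G)\to C_c(G/H)$ defined by $Q(f)(xH):=\int_H f(xh)\,d\mu_H(h)$. Left-invariance of $\mu_H$ makes $Q(f)$ a well-defined continuous function on $G/H$ with compact support, and a standard partition-of-unity argument shows $Q$ is surjective. The crux is then the \emph{descent lemma}: if $Q(f)=0$, then $\int_G f(x)\rho(x)\,d\mu_G(x)=0$. To prove it I would start from the tautology $\int_G g(x)\,Q(f)(xH)\,d\mu_G(x)=0$, unfold it by Fubini into a double integral over $G\times H$, and substitute $x\mapsto xh^{-1}$ in the $G$-variable, which introduces exactly the factor $\Delta_G(h^{-1})$; after a further inversion $h\mapsto h^{-1}$ on $H$, the inner $H$-integral collapses into the very expression defining $\rho$, yielding $\int_G f(x)\rho(x)\,d\mu_G(x)=0$. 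This computation, in which the two modular functions are balanced against one another and the transformation law of $\rho$ is indispensable, is the main obstacle.

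Granting the descent lemma together with surjectivity of $Q$, the assignment $I(Q(f)):=\int_G f\rho\,d\mu_G$ is a well-defined linear functional on $C_c(G/H)$; positivity uses that every nonnegative $\phi\in C_c(G/H)$ equals $Q(f)$ for some nonnegative $f$. The Riesz representation theorem then furnishes a Radon measure $\mu$ on the locally compact Hausdorff space $G/H$ with $I(\phi)=\int_{G/H}\phi\,d\mu$, which is precisely Weil's formula for $f\in C_c(G)$; the extension to arbitrary integrable $f$ is routine via monotone convergence and density of $C_c(G)$ in $L^1(\mu_G)$. Finally I would check that $\mu$ is quasi-invariant and regular: translating by $x_0\in G$ multiplies the functional by the continuous strictly positive factor $\rho(x_0 x)/\rho(x)$, so the translates of $\mu$ are mutually absolutely continuous with a continuous positive Radon-Nikodym derivative and hence have the same null sets, giving quasi-invariance in the sense of Definition \ref{quasi_inv} (cf. Fact \ref{quasi_inv_and_equivalence}); regularity is automatic for a Radon measure on this space, or follows from Remark \ref{Polish_Radon_regular}.
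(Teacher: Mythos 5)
Your overall strategy is the right one, and it is essentially the route taken by the source the paper itself relies on: the paper gives no proof of this theorem, quoting it as \cite[Corollary 1.21]{KT}, and the argument there (as in Folland or Bourbaki) runs exactly through a rho-function, the averaging map $Q$, a descent lemma, and the Riesz representation theorem. Your descent computation (the two Fubini passes, the substitution $x\mapsto xh^{-1}$ producing $\Delta_G(h^{-1})$, the inversion $h\mapsto h^{-1}$ producing $1/\Delta_H$), the well-definedness and positivity of the functional $I$, the Riesz step, and the verification of quasi-invariance via the $H$-invariant ratio $\rho(x_0x)/\rho(x)$ are all correct as sketched.

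There is, however, a genuine gap at the very first step, the construction of $\rho$. You propose to take a single nonnegative $g\in C_c(G)$ whose $H$-averages never vanish. Such a $g$ exists if and only if $G/H$ is compact: the set $\{g>0\}$ lies in the compact support of $g$, so its image under $\pi\colon G\to G/H$ is compact, while the requirement ``$\int_H g(xh)\,d\mu_H(h)>0$ for every $x$'' says precisely that this image is all of $G/H$. Already $G=\mathbb{R}$, $H=\{e\}$ kills the construction: there the $H$-average of $g$ is $g$ itself, and a compactly supported continuous function on $\mathbb{R}$ cannot be everywhere positive. Since the theorem is stated for an arbitrary closed subgroup $H$ of a locally compact (e.g.\ locally compact Polish) group $G$, where $G/H$ need not be compact, this step fails; note the contrast with your surjectivity argument for $Q$, which is fine because it only needs positivity of an $H$-average on a \emph{compact} subset of $G/H$. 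The standard repair, and the one used in \cite{KT}, is to replace $g$ by a Bruhat function: a continuous $\beta\geq 0$ on $G$, in general not compactly supported, such that $\operatorname{supp}\beta\cap KH$ is compact for every compact $K\subseteq G$ and $\int_H\beta(xh)\,d\mu_H(h)=1$ for all $x$; its existence uses $\sigma$-compactness of $G$ (or paracompactness of $G/H$) and a locally finite partition of unity. With $\beta$ in place of $g$, your formula for $\rho$, its transformation law $\rho(xh_0)=\frac{\Delta_H(h_0)}{\Delta_G(h_0)}\rho(x)$, and the descent computation all go through, because for $f\in C_c(G)$ the integrands involved are supported in sets of the form $\operatorname{supp}\beta\cap KH$ on which Fubini still applies. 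So the proof is salvageable, but as written the key object $\rho$ has only been constructed in the case that $G/H$ is compact (e.g.\ when $G$ itself is compact).
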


\begin{lemma}\label{WIF_char}
Let $G$ be a locally compact group, which is $\sigma$-compact(e.g.  a compact group or a locally compact Polish group), that is, $G$ is a countable union of compact sets. Let $H$ be a closed subgroup of $G$. For any non-negative measurable function $f$ on $G$, we have $$\int_G f(x)\rho(x)d\mu_G(x)=\int_{G/H}\int_H f(xh)d\mu_H(h)d\mu(xH).$$
\end{lemma}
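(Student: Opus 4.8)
The plan is to deduce the formula for an arbitrary non-negative measurable $f$ from the integrable case (Theorem \ref{Weil_integration_formula}) by an exhaustion-plus-monotone-convergence argument, using $\sigma$-compactness to manufacture a suitable increasing sequence of integrable functions converging to $f$.

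First I would fix, once and for all, the quasi-invariant measure $\mu$ on $G/H$ and the continuous strictly positive function $\rho$ provided by Theorem \ref{Weil_integration_formula}, so that the identity already holds for every integrable function on $G$. Using that $G$ is $\sigma$-compact, write $G=\bigcup_n K_n$ with each $K_n$ compact and $K_n\subseteq K_{n+1}$ (replacing a given exhaustion by its partial finite unions if necessary). Set $f_n:=\min(f,n)\cdot\chi_{K_n}$. Each $f_n$ is non-negative, measurable, bounded by $n$, and supported on the compact set $K_n$; since the Haar measure is a Radon measure we have $\mu_G(K_n)<\infty$, and since $\rho$ is continuous it is bounded on $K_n$, so $\int_G f_n\,\rho\,d\mu_G<\infty$ and $f_n$ is integrable. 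Moreover $f_n$ increases pointwise to $f$: the factor $\min(f,n)$ increases to $f$ and $\chi_{K_n}$ increases to $\chi_G=1$, so the products increase to $f$.

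Next I would apply Theorem \ref{Weil_integration_formula} to each $f_n$, obtaining the identity for every $f_n$, and then pass to the limit on both sides via the Monotone Convergence Theorem. On the left, $f_n\rho$ increases to $f\rho$, so $\int_G f_n\,\rho\,d\mu_G\to\int_G f\,\rho\,d\mu_G$. On the right I would apply monotone convergence twice: for each fixed coset, $h\mapsto f_n(xh)$ increases to $h\mapsto f(xh)$, so $\int_H f_n(xh)\,d\mu_H(h)$ increases to $\int_H f(xh)\,d\mu_H(h)$; the resulting outer integrands are themselves increasing in $n$, so a second application yields convergence of the iterated integral to $\int_{G/H}\int_H f(xh)\,d\mu_H(h)\,d\mu(xH)$. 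Combining the two limits gives the desired equality, with both sides allowed to equal $+\infty$.

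The only genuinely delicate point is the measurability bookkeeping needed to make the right-hand side well defined for a merely measurable $f$: I must check that $h\mapsto f(xh)$ is measurable on $H$ for each $x$, and that $xH\mapsto\int_H f(xh)\,d\mu_H(h)$ is measurable on $G/H$. The first is immediate, since $h\mapsto xh$ is continuous and $f$ is measurable, so $h\mapsto f_n(xh)$ is measurable and its increasing limit $h\mapsto f(xh)$ is too. The second holds for each $f_n$ as part of the integrable case, and the outer integrand for $f$ is the increasing limit of these, hence measurable. Once this is in place the two monotone-convergence passages are routine, so I expect the exhaustion construction and the verification that each $f_n$ is integrable—both of which rely on $\sigma$-compactness together with the Radon property of $\mu_G$—to be the only places where the hypotheses are really used.
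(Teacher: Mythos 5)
Your proposal is correct and follows essentially the same argument as the paper: truncate $f$ to an increasing sequence of integrable functions using the $\sigma$-compact exhaustion (the paper uses $f_i = f\chi_{X_i\cap Y_i}$ with $Y_i = f^{-1}[[0,i]]$, you use $\min(f,n)\chi_{K_n}$, an inessential difference), apply Theorem \ref{Weil_integration_formula} to each truncation, and pass to the limit on both sides via the Monotone Convergence Theorem. Your added care about integrability of the truncations and the measurability of the iterated integrand is a welcome refinement of details the paper leaves implicit.
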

\begin{proof}
Suppose a locally compact group  $G$ is $\sigma$-compact so that there is an increasing sequence $(X_i)_{i\in \omega}$ of compact subsets of $G$ such that $\bigcup X_i=G$. Let $H$ be a closed subgroup of $G$ and let $f$ be a positive measurable function on $G$. Let $Y_i=f^{-1}[[0,i]]$ for each $i$. Consider $f_i=f\chi_{X_i\cap Y_i}$ for each $i$, where $\chi_{X_i\cap Y_i}$ is the characteristic functions of $X_i\cap Y_i$, so that $\lim_i f_i =f$. By the Monotone Convergence Theorem and Theorem \ref{Weil_integration_formula}, we have that
\begin{align*}
\int_G f(x)\rho(x) d\mu_G(x)&=\lim_i \int_G f_i(x)\rho(x) d\mu_G(x)\\
&=\lim_i \int_{G/H}\int_H f_i(xh)d\mu_H(h)d\mu(xH)\\
&=\int_{G/H} \lim_i \int_H f_i(xh)d\mu_H(h)d\mu(xH)\\
&=\int_{G/H} \int_H \lim_i  f_i(xh)d\mu_H(h)d\mu(xH)\\
&=\int_{G/H}\int_H f(xh)d\mu_H(h)d\mu(xH).
\end{align*}
\end{proof}

\begin{notation}
Let $G$ be a group acting on a locally compact space $X$. Let $A$ be a Borel set of $X$. For a Borel measure $\nu$ on $X$, we write $A\subset_{\nu} X$ if $\nu(A)>0$. We write $A\subset_{\mu}X$ if $A\subset_{\nu}X$ for any quasi-invariant measure $\nu$ on $X$.
\end{notation}
\begin{corollary}\label{WIF_quotient}
Let $G$ be a locally compact group, which is $\sigma$-compact and let $H$ be a closed subgroup of $G$. Let $\pi:\ G\rightarrow G/H$ be the canonical projection. For a Borel set $A$ of $G/H$, we have that $$\pi^{-1}[A]\subset_{\mu} G\Leftrightarrow A\subset_{\mu}G/H.$$
\end{corollary}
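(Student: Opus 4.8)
The plan is to reduce the relation $\subset_{\mu}$ on each of the two spaces to a statement about one fixed measure, and then to apply Weil's formula in the form of Lemma \ref{WIF_char} to a single indicator function.

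First I would unwind the notation. By Remark/Definition \ref{Haar_measure} the Haar measure $\mu_G$ is a quasi-invariant measure on $G$, and the measure $\mu$ furnished by Theorem \ref{Weil_integration_formula} is a quasi-invariant measure on $G/H$. By Fact \ref{quasi_inv_and_equivalence}, applied to $G/H$ directly and to $G\cong G/\{e\}$ (note $\{e\}$ is closed since $G$ is Hausdorff), any two quasi-invariant measures on each of these spaces share the same null sets. Hence $\pi^{-1}[A]\subset_{\mu}G$ holds if and only if $\mu_G(\pi^{-1}[A])>0$, and $A\subset_{\mu}G/H$ holds if and only if $\mu(A)>0$. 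It therefore suffices to prove
\[
\mu_G(\pi^{-1}[A])>0 \quad\Leftrightarrow\quad \mu(A)>0.
\]

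Next I would apply Lemma \ref{WIF_char} to the function $f=\chi_{\pi^{-1}[A]}$, which is non-negative and measurable because $\pi$ is continuous and $A$ is Borel. The key point is that $f$ is constant on each coset $xH$: for $h\in H$ we have $xhH=xH$, so $f(xh)=\chi_A(xH)$ does not depend on $h$. Consequently the inner integral factors as $\int_H f(xh)\,d\mu_H(h)=\mu_H(H)\,\chi_A(xH)$, and Lemma \ref{WIF_char} gives
\[
\int_{\pi^{-1}[A]} \rho(x)\,d\mu_G(x)=\mu_H(H)\,\mu(A).
\]

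Finally I would read the desired equivalence off this identity. Since $\rho$ is continuous and strictly positive, the left-hand integral vanishes exactly when $\mu_G(\pi^{-1}[A])=0$; and since $\mu_H(H)>0$ (the Haar measure of the whole group is nonzero), the right-hand product vanishes exactly when $\mu(A)=0$. The step requiring the most care is precisely this last one: $H$ may fail to be compact, in which case $\mu_H(H)=\infty$, so the identity must be read in the extended reals with the convention $\infty\cdot 0=0$, and one then checks that $\mu_H(H)\,\mu(A)=0$ if and only if $\mu(A)=0$ in all cases. Combining the two halves yields $\mu_G(\pi^{-1}[A])>0\Leftrightarrow\mu(A)>0$, which by the first paragraph is exactly the claim. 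The genuine analytic content is carried by Weil's formula together with the equivalence of quasi-invariant measures; the remaining steps are bookkeeping.
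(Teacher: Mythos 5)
Your proof is correct and takes essentially the same route as the paper: both reduce $\subset_{\mu}$ to the Haar measure $\mu_G$ and the Weil measure $\mu$, then apply Lemma \ref{WIF_char} to the indicator function $\chi_{\pi^{-1}[A]}$ and compare positivity of the two sides of the formula. The only differences are organizational---the paper argues by contradiction, splitting the outer integral over $G/H\setminus A$ where the inner integral vanishes, whereas you factor the inner integral as $\mu_H(H)\chi_A(xH)$; your explicit appeal to Fact \ref{quasi_inv_and_equivalence} (including for $G\cong G/\{e\}$) and your care with the case $\mu_H(H)=\infty$ make precise two points the paper leaves implicit.
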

\begin{proof}
Let $\mu_G$ be the Haar measure on $G$ and let $\mu$ be a quasi-invariant measure in Theorem \ref{Weil_integration_formula}. Let $A$ be a Borel subset of $G/H$. Let $A'=\pi^{-1}[A]$. By Corollary \ref{WIF_char}, we have that $$\int_G \chi_{A'}(x)\rho(x)d\mu_G(x)=\int_{G/H}\int_H \chi_{A'}(xh)d\mu_H(h)d\mu(xH).$$ Suppose $\mu_G(A')>0$ and $\mu(A)=0$. The condition $\mu_G(A')>0$ implies $\int_G \chi_{A'}(x)\rho(x)d\mu_G(x)>0$. Since $\mu(A)=0$, we have that $$\int_{G/H}\int_H \chi_{A'}(xh)d\mu_H(h)d\mu(xH)=\int_{G/H\setminus A}\int_H \chi_{A'}(xh)d\mu_H(h)d\mu(xH).$$ For $x\in G\setminus A'$ and $h\in H$, $xh\notin A'$ and $\chi_{A'}(xh)=0$. Thus we have $$\int_{G/H\setminus A}\int_H \chi_{A'}(xh)d\mu_H(h)d\mu(xH)=0,$$ which contradicts. So $\mu_G(A')>0$ implies $\mu(A)>0$. Conversely, if $\mu(A)>0$, then $\int_{G/H}\int_H \chi_{A'}(xh)d\mu_H(h)d\mu(xH)>0$ and $\mu_G(A')>0$.
\end{proof}

Next we introduce some notations for basic model theory for $\mu$-structures(c.f. \cite[Section 3]{Kru5}). We recall notions of {\em definable sets, imaginary sorts, names} and so on in \cite[Section 3]{Kru5}. Most of all, imaginary sorts are crucial to handle quotient objects in Section \ref{Small compact mu-groups}. Let $(X,G)$ be a $\mu$-structure. For $Y\subset X^n$, we define $\stab(Y):=\{g\in G|\ g[y]=Y\}$. We say that $Y$ is invariant [over a finite set $A$] if $\stab(Y)=G$[$\supset G_A$, respectively]. We recall the definitions of imaginary sorts and definable sets.
\begin{definition}\cite[Definition 3.1, 3.3]{Kru5}\label{imaginary_definable_name}
\begin{enumerate}
	\item The {\em imaginary extension}, denoted by $X^{eq}$, is the union of all sets of the form $X^n/E$ with $E$ ranging over all invariant equivalence relations such that for all $a\in X^n$, $\stab([a]_E)$ is a closed subgroup of $G$. The sets $X^n/E$ is called the (imaginary) sorts of $X^{eq}$.
	\item A subset $D$ of a sort of $X^{eq}$ is {\em definable} over a finite subset $A$ of $X^{eq}$(in short $A$-definable) if $D$ is invariant over $A$ and $\stab(D)$ is a closed subset of $G$. We say that $D$ is definable if it is definable over some $A$.
	\item We say $d\in X^{eq}$ is a name for a definable set $D$ if $\stab(D)=G_d$.
\end{enumerate}
\end{definition} 

\begin{fact}\cite[Remark 3.2, Proposition 3.4]{Kru5}
\begin{enumerate}
	\item $(X^{eq})^{eq}=X^{eq}$.
	\item Each definable set in $X^{eq}$ has a name in $X^{eq}$.
	\item Let $X^n/E$ be a sort of $X^{eq}$. Then $G$ induces a permutation group of $X^n/E$, denoted by $G\restriction X^n/E$, which is a compact group, and $(X^n/E,G\restriction X^n/E)$ is a $\mu$-structure.
\end{enumerate}
\end{fact}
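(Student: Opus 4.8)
The three assertions are the $\mu$-structure analogues of Shelah's basic facts about $M^{\eq}$: the combinatorial content is the familiar slogan that imaginaries of imaginaries are imaginaries, and the only genuinely new ingredient is to check that the topological hypothesis — closedness of the relevant stabilizers — survives each coding. The plan is to prove (2) first, since it carries the main construction, and then to deduce (1) and (3) by analogous bookkeeping.

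For (2), let $D$ be a subset of a sort of $X^{\eq}$ that is definable over a finite $A\subseteq X^{\eq}$, and put $H:=\stab(D)$, a closed subgroup with $H\supseteq G_A$. The idea is to realize the pointed transitive $G$-set $G/H$ as a single $E$-class inside a suitable sort. First I would pass to reals: choosing for each imaginary $c\in A$ a representative $\bar b_c\in X^{n_c}$ with $c=[\bar b_c]_{E_c}$ and concatenating, I obtain $\bar a\in X^n$ with $G_{\bar a}=\bigcap_c G_{\bar b_c}\subseteq G_A\subseteq H$. On the orbit $O:=G\bar a\subseteq X^n$ I define $g_1\bar a \mathrel{E} g_2\bar a \iff g_1^{-1}g_2\in H$; this is well defined precisely because $G_{\bar a}\subseteq H$, and it is $G$-invariant because $(gg_1)^{-1}(gg_2)=g_1^{-1}g_2$. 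On the $G$-invariant complement $X^n\setminus O$ I let $E$ be equality. Then $E$ is a $G$-invariant equivalence relation on $X^n$; its classes in $O$ are the sets $g_0H\bar a$ with $\stab(g_0H\bar a)=g_0Hg_0^{-1}$ closed, while its classes off $O$ are singletons with the closed pointwise stabilizers $G_{\bar x}$. Hence $X^n/E$ is a sort of $X^{\eq}$, and the element $d:=[\bar a]_E=H\bar a$ satisfies $G_d=\stab(H\bar a)=H=\stab(D)$, so $d$ is a name for $D$.

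For (1), the inclusion $X^{\eq}\subseteq (X^{\eq})^{\eq}$ is immediate by taking each sort of $X^{\eq}$ as a home sort (equality has singleton classes, whose stabilizers $G_c$ are closed as $c\in X^{\eq}$). For the reverse inclusion I would show that every sort $\prod_{i=1}^m S_i/F$ of $(X^{\eq})^{\eq}$, with $S_i=X^{n_i}/E_i$, is $G$-equivariantly identified with a sort of $X^{\eq}$. First, $\prod_i S_i$ is identified with $X^K/E^*$ (where $K=\sum_i n_i$ and $E^*$ is the product relation) via $([\bar b_1]_{E_1},\dots,[\bar b_m]_{E_m})\mapsto [(\bar b_1,\dots,\bar b_m)]_{E^*}$, a $G$-equivariant bijection with $E^*$ invariant and $\stab([\bar x]_{E^*})=\bigcap_i \stab([\bar x_i]_{E_i})$ closed. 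Pulling $F$ back along this identification yields an invariant equivalence relation $\hat F\supseteq E^*$ on $X^K$ with $\prod_i S_i/F\cong X^K/\hat F$, where $\stab([\bar x]_{\hat F})$ equals the stabilizer of the corresponding $F$-class and is closed by hypothesis. Thus $X^K/\hat F$ is a sort of $X^{\eq}$, and under these identifications $(X^{\eq})^{\eq}=X^{\eq}$.

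For (3), since $E$ is invariant, $G$ acts on $X^n/E$, and the kernel $N:=\bigcap_{\bar c\in X^n}\stab([\bar c]_E)$ is a closed normal subgroup, so $G\restriction X^n/E$ is by construction the faithful action of $G/N$. If $G$ is compact then $G/N$ is compact; if $G$ is locally compact Polish then $G/N$ is again locally compact Polish (a quotient of a locally compact group by a closed subgroup is locally compact, and a quotient of a Polish group by a closed normal subgroup is Polish). The stabilizer in $G/N$ of a point $[\bar c]_E$ is $\stab([\bar c]_E)/N$, which is closed since $\stab([\bar c]_E)$ is closed and contains $N$; hence $(X^n/E,\,G\restriction X^n/E)$ is a $\mu$-structure. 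I expect the main obstacle to be the construction in (2): because $H=\stab(D)$ need not be normal in $G$ one cannot form a quotient group directly, and the coset partition on the single orbit $O=G\bar a$ (with equality elsewhere) is the device that simultaneously secures $G$-invariance of $E$ and forces the stabilizer of the distinguished class to equal $H$. Parts (1) and (3) are then routine, provided one is careful to verify that closedness of stabilizers is inherited by products, pullbacks, and quotients.
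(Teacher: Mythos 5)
Your proposal is correct and takes the standard route: the paper gives no proof of this Fact, citing \cite[Remark 3.2, Proposition 3.4]{Kru5} instead, and your three arguments (the coset coding on a single orbit to produce names, the product/pullback identification of sorts for $(X^{eq})^{eq}=X^{eq}$, and the passage to the quotient $G/N$ with closedness of saturated stabilizers for part (3)) are precisely the arguments of that reference transplanted to the $\mu$-structure setting. If anything, your case distinction in (3) between compact $G$ and locally compact Polish $G$ is more careful than the Fact's literal wording, which calls $G\restriction X^n/E$ ``a compact group'' even when $G$ is only assumed locally compact Polish.
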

\noindent We define a notion of $\mcl^{eq}$ and $\dcl^{eq}$ in $X^{eq}$ in the same way as in $X$(see Definition \ref{acl_dcl} and Definition \ref{mcl} for the notions of $\dcl$ and $mcl$ respectively).

We recall a notion of (topological) $G$-space.
\begin{definition}\cite[Definition 3.5]{Kru5}\label{G-space}
Let $G$ be a Polish group.
\begin{enumerate}
	\item A {\em (topological) G-space} is a Polish structure $(X,G)$ such that $X$ is a topological space and the action of $G$ is continuous.
	\item A {\em Polish[compact] G-space} is a $G$-space $(X,G)$ where $X$ is a Polish [compact] space.
\end{enumerate}
\end{definition}
\noindent We define $\mu$-spaces as a counterpart of Definition \ref{G-space}.
\begin{definition}
A {\em (topological) $\mu$-space} is a $\mu$-structure $(X,G)$, where $X$ is a Hausdorff space and $G$ acts continuously on $X$.
\end{definition}
\noindent Next we recall a notion of $*$-closed sets in $\mu$-spaces.
\begin{definition}\cite[Definition 3.6]{Kru5}\label{*-closed}
Assume $(X,G)$ is a $\mu$-space. We equip the quotient topology on each sort $X^n/E$ of $X^{eq}$. We say $D\subset X^n/E$ is {\em $A$-closed} for a finite $A\subset X^{eq}$ if it is closed and invariant over $A$. We say that $D\subset X^n/E$ is {\em $*$-closed} if it is $A$-closed for some finite $A$. 
\end{definition}

\begin{fact}\cite[Proposition 3.7]{Kru5}\label{topological_sort}
Let $(X,G)$ be a compact $\mu$-space and $E$ be a $\emptyset$-closed equivalence relation on $X^n$. Then $X^n/E$ is compact, and $(X^n/E,G/G_{X^n/E}$ is a compact $\mu$-space, where $G_{X^n/E}=\bigcap_{[a]_E\in X^n/E} \stab([a]_E)$.
\end{fact}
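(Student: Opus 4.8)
The plan is to establish the three assertions separately, reducing each to a classical fact about compact Hausdorff spaces and quotient maps. First I would note that $X^n$ is compact Hausdorff, being a finite power of the compact Hausdorff space $X$. Since $E$ is $\emptyset$-closed it is in particular a closed subset of $X^n\times X^n$, and a closed equivalence relation on a compact Hausdorff space has Hausdorff quotient; as the quotient map $q\colon X^n\to X^n/E$ is a continuous surjection, $X^n/E$ is then compact Hausdorff. This settles the first assertion and provides the topology on which the induced group will act.

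Next I would verify that the pair is a $\mu$-structure. Because $E$ is invariant over $\emptyset$ (so $g[E]=E$ for all $g$), the formula $g\cdot[a]_E=[ga]_E$ gives a well-defined action of $G$ on $X^n/E$. The crucial observation is that each $\stab([a]_E)$ is closed: writing $\psi_a\colon G\to X^n\times X^n$, $g\mapsto(ga,a)$, which is continuous by continuity of the action, we have $\stab([a]_E)=\{g:(ga,a)\in E\}=\psi_a^{-1}[E]$, closed since $E$ is closed. Hence $G_{X^n/E}=\bigcap_{[a]_E}\stab([a]_E)$ is a closed normal subgroup of $G$, being the kernel of the induced homomorphism $G\to\Sym(X^n/E)$, and since $G$ is compact, $G/G_{X^n/E}$ is a compact Hausdorff group. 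Its action on $X^n/E$ is faithful by construction, and the stabilizer of $[a]_E$ is $\stab([a]_E)/G_{X^n/E}$, which is closed because $\stab([a]_E)\supseteq G_{X^n/E}$ is closed and saturated while $G\to G/G_{X^n/E}$ is an open quotient map. Thus $(X^n/E,G/G_{X^n/E})$ is a $\mu$-structure.

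The remaining point, establishing continuity of the action, is the step I expect to be the main obstacle, since quotient topologies behave poorly under products. Let $\alpha\colon G\times X^n\to X^n$ denote the (diagonal) action. The composite $q\circ\alpha\colon G\times X^n\to X^n/E$ is continuous, and by $G$-invariance of $E$ it is constant on the fibres of $\id_G\times q$, so it factors as $q\circ\alpha=\bar\alpha\circ(\id_G\times q)$ for a unique $\bar\alpha\colon G\times(X^n/E)\to X^n/E$. The decisive input is that $\id_G\times q$ is again a quotient map, which holds by the standard fact that the product of a quotient map with the identity on a locally compact Hausdorff space is a quotient map; since $G$ is locally compact Hausdorff, $\bar\alpha$ is continuous. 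Finally, since $G\to G/G_{X^n/E}$ is an open surjection, its product with $\id_{X^n/E}$ is also an open surjection, hence a quotient map, and $\bar\alpha$ descends to a continuous action of $G/G_{X^n/E}$ on $X^n/E$. Therefore $(X^n/E,G/G_{X^n/E})$ is a $\mu$-space, and it is compact because $X^n/E$ is, which completes the argument.
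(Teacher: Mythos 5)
Your proof cannot be compared against an argument in the paper, because there is none: the statement is imported verbatim as a Fact with the citation \cite[Proposition 3.7]{Kru5}, and the paper never proves it. What you have produced is a correct, self-contained topological proof. Its three pillars — the classical fact that a closed equivalence relation on a compact Hausdorff space has compact Hausdorff quotient, the identification $\stab([a]_E)=\psi_a^{-1}[E]$ showing class-stabilizers are closed, and Whitehead's theorem (a quotient map times the identity of a locally compact Hausdorff space is again a quotient map) to push joint continuity of the action through the two quotient maps — are all standard and correctly applied. Indeed your route has a real advantage over merely citing \cite{Kru5}: it nowhere uses metrizability, so it covers compact groups $G$ that are not Polish, a case the paper's Definition \ref{mu_structure} explicitly allows but which lies outside Krupinski's Polish-group setting.

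One step does need repair. You write ``since $G$ is compact, $G/G_{X^n/E}$ is a compact Hausdorff group,'' but in this paper a \emph{compact} $\mu$-space only requires the space $X$ to be compact; by Definition \ref{mu_structure} the acting group $G$ may instead be a locally compact Polish group. In that case the correct conclusion is that $G/G_{X^n/E}$ is again a locally compact Polish group: the quotient of a Polish group by a closed normal subgroup is Polish, and the quotient of a locally compact group by a closed subgroup is locally compact, so $(X^n/E,\,G/G_{X^n/E})$ still satisfies Definition \ref{mu_structure}. Nothing else in your argument is affected — the closedness of $\stab([a]_E)/G_{X^n/E}$ and the Whitehead step use only that $G$ is locally compact Hausdorff — so with this one-line case distinction the proof goes through in full generality.
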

\noindent Using Fact \ref{topological_sort}, we introduce topological sorts.
\begin{definition}\cite[Definition 3.8]{Kru5}\label{teq}
Let $(X,G)$ be a compact $\mu$-space. We define $X^{teq}$, called {\em topological imaginary extension} as the disjoint union of the space $X^n/E$ with $E$ ranging over all $\emptyset$-closed equivalence relation on $X^n$. Each $X^n/E$ is called a {\em topological sort} of $X^{teq}$. We say that $D$ is $A$-closed in $X^{teq}$ if it is $A$-closed in a sort of $X^{teq}$.
\end{definition}

\noindent From Proposition \ref{equiv_mu_indo}(3), we have the following result.
\begin{fact}\cite[Remark 3.10]{Kru5}\label{sort_mu_indo}
\begin{enumerate}
	\item Let $(X,G)$ be a $\mu$-structure and $D$ an $A$-definable[or only invariant over $A$] subset of $X^{eq}$. Then $(D,G_A/G_{AD})$ is a $\mu$-structure. Moreover, if $(X,G)$ is small, then $(D,G_A/G_{AD})$ is also small. For tuples and subsets of $D$, the computation of $\mu$-independence in $(X,G_A)$ coincides with the computation of $\mu$-independence in $(D,G_A/G_{AD})$.
	\item Let $(X,G)$ be a $\mu$-space. If $D$ is an $A$-closed of $X^n$[or $X^{teq}$ if $X$ is compact], then $(D,G_A/G_{AD})$ is a $\mu$-space.
\end{enumerate}
\end{fact}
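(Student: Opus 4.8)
The plan is to establish the three assertions of part (1) in order and then derive part (2) from part (1) together with elementary topology, the coincidence of $\mu$-independence being the substantial point. Throughout I would write $\bar{G}:=G_A/G_{AD}$, where $G_{AD}$ is the pointwise stabilizer of $A\cup D$, that is, the kernel of the action homomorphism $G_A\to\Sym(D)$ that exists because $D$ is invariant over $A$.

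First I would verify that $(D,\bar{G})$ is a $\mu$-structure. Since $A$ is finite, $G_A=\bigcap_{a\in A}G_a$ is an intersection of closed subgroups, hence closed in $G$; a closed subgroup of a compact group is compact and a closed subgroup of a locally compact Polish group is again locally compact Polish, so $G_A$ is of the type required in Definition \ref{mu_structure} and $(X,G_A)$ is itself a $\mu$-structure. The subgroup $G_{AD}=\bigcap_{d\in D}(\stab(d)\cap G_A)$ is an intersection of closed subgroups, since each $\stab(d)$ is closed for $d\in X^{eq}$, hence closed, and it is normal in $G_A$ as the kernel of the action on $D$. Therefore $\bar{G}$ is a quotient of a compact (resp.\ locally compact Polish) group by a closed normal subgroup, so it is again compact (resp.\ locally compact Polish). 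The action of $\bar{G}$ on $D$ is faithful by construction, and for $d\in D$ the stabilizer $\bar{G}_d=(\stab(d)\cap G_A)/G_{AD}$ is the image of a closed subgroup containing $G_{AD}$, hence closed in $\bar{G}$. For smallness, fix finite $C\subseteq D\subseteq X^n/E$ and choose representatives $c_i\in X^n$ with $C=\{[c_1]_E,\dots,[c_k]_E\}$. Since $G_{c_i}\subseteq\stab([c_i]_E)$, the pointwise stabilizer $G_{A\,c_1\cdots c_k}$ of the finite real set $A\cup\{\text{coordinates of the }c_i\}$ is contained in $G_{AC}$, so the number of $G_{AC}$-orbits on $D$ is at most the number of $G_{A\,c_1\cdots c_k}$-orbits on $X^n$; smallness of $(X,G)$ gives countably many orbits of this pointwise stabilizer on $X$, and an induction on tuple length upgrades this to countably many orbits on $X^n$, whose images cover $D$.

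The coincidence of $\mu$-independence is the main obstacle, and it is where the quotient compatibility of the Haar measure enters. For a finite tuple $a$ and finite $B,C$ from $D$, the orbit $\o(a/C)$ is literally the same subset of $D$ whether computed in $(X,G_A)$ or in $(D,\bar{G})$, because $D$ is $G_A$-invariant and $G_{AD}$ fixes $a$. As a homogeneous space it is $G_{AC}/G_{ACa}$ in the first structure and $\bar{G}_C/(\bar{G}_C)_a=(G_{AC}/G_{AD})/(G_{ACa}/G_{AD})$ in the second; since $a\in D$ forces $G_{AD}\subseteq G_{ACa}$, the natural identification $(G_{AC}/G_{AD})/(G_{ACa}/G_{AD})\cong G_{AC}/G_{ACa}$ matches the two homogeneous spaces. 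By Fact \ref{quasi_inv_and_equivalence} the quasi-invariant measures on this space agree up to equivalence, so the relation $\subset_{\mu}$ — and hence, via the characterization in Proposition \ref{equiv_mu_indo}(3), the relation $\indo^{\mu}$ — is insensitive to which of the two structures is used. Equivalently, one can argue directly on the groups: writing $q\colon G_{AC}\to G_{AC}/G_{AD}$ for the projection and noting $\pi_C=\bar{\pi}_C\circ q$, one has $\pi_C^{-1}[\o(a/CB)]=q^{-1}[\bar{\pi}_C^{-1}[\o(a/CB)]]$, and Corollary \ref{WIF_quotient} transfers positivity of the Haar measure across $q$; the only care needed is that the sets involved are Borel, which holds for the orbits in this setting. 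Either route yields the stated coincidence.

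Finally, part (2) follows from part (1): an $A$-closed set is in particular invariant over $A$ and its points have closed stabilizers, so $(D,\bar{G})$ is a $\mu$-structure by part (1). It remains to note that $D$ is Hausdorff as a closed subspace of $X^n$ (or, in the compact case, of a topological sort of $X^{teq}$, which is compact Hausdorff by Fact \ref{topological_sort}), and that the $\bar{G}$-action is continuous. Indeed the continuous $G_A$-action $G_A\times D\to D$ factors through $\bar{G}$, and since $q$ is an open continuous surjection the map $q\times\id_D$ is open, continuous and surjective, hence a quotient map; the action is constant on its fibers, so the induced map $\bar{G}\times D\to D$ is continuous. Thus $(D,\bar{G})$ is a $\mu$-space.
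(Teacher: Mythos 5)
Your proposal is correct and takes essentially the same route as the paper: the paper gives no separate proof, stating the result as a fact imported from Krupinski's Remark 3.10 and observing that the coincidence of $\mu$-independence follows from Proposition \ref{equiv_mu_indo}(3), i.e., from transferring positivity of the Haar measure across the quotient by $G_{AD}$ exactly as you do via Corollary \ref{WIF_quotient} and Fact \ref{quasi_inv_and_equivalence}. The remaining structural verifications you spell out (closedness of stabilizers, smallness via representatives and induction on tuple length, continuity of the induced action through the open quotient map) are the same ones as in Krupinski's Polish-structure setting.
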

\section{mu-independence}\label{section mu-independence}
In this section, we define $\mu$-independence for $\mu$-structures and show that it satisfies the basic $4$ axioms.
\begin{definition}\label{mu-independence}
Let $(X,G)$ be a $\mu$-structure and let $\mu$ be the Haar measure on $G$. For $a\in X$ and finite sets $A,B\subset X$, we say $a$ is {\em $\mu$-independent} from $B$ over $A$, denoted by $a\indo^{\mu}_A B$, if $$\mu_A(\pi_A^{-1}[\o(a/AB)])>0.$$ Note that $\pi_A^{-1}[\o(a/AB)]=G_{AB}G_{Aa}$.
\end{definition}
\noindent Note that our $\mu$-independence generalizes $\mu$-independence for profinite structures concerned in \cite{N2}(see Remark \ref{rem:cpt_str_equiv_mu_nm_independence}).

\begin{remark}\label{push-forward_regular}
Let $H$ be a closed subgroup of a compact group $G$. Let $\mu$ be the Haar measure on $G$. Let $\pi:\ G\rightarrow G/H$ be the projection map. Then the push-forward $\pi_*(\mu)$ is a quasi-invariant measure. 
\end{remark}
\begin{proof}
It is enough to show that $\pi_*(\mu)$ is a regular Borel measure. Since $G/H$ are equipped with the quotient topology, $G$ and $G/H$ are compact, and $\pi$ is continuous, $\pi$ is closed and open map. From this, $\pi_*(\mu)$ is a regular Borel measure.
\end{proof}

\noindent For a $G$-space $X$, we consider a $G$-space $X^n$ for each $n\ge 1$ as follows: For $n\ge 1$, $x=(x_1,\ldots,x_n)\in X^n$, and $g\in G$, $gx:=(gx_1,\ldots,gx_n)$.
%\begin{definition}\label{small_G_space}
%We call a $G$-action on $X$ {\em small} if for each $n\ge 1$ and $A\subset X$ finite, there are countably many orbits on the $G_A$-space $X^n$, that is, there are $a_i\in X^n$ for $i\in \omega$ such that $X^n=\bigcup_{i\in \omega}\limits \o(a_i/A)$. Equivalently, for every finite set $F\subseteq X$, $G_F$ acting on $X$ has countably many orbits.
%\end{definition}

\begin{remark}\cite[Lemma 2.6]{Kru5}\label{Borel_H_1H_2}
Let $G$ be a Polish group. For any closed subgroups $H_1, H_2$ of $G$, $H_1 H_2$ is a Borel subset of $G$. 
\end{remark}
\begin{proposition}\label{equiv_mu_indo}
The followings are equivalent:
\begin{enumerate}
\item $a\indo^{\mu}_A B$;
\item $G_{AB} G_{Aa}\subset_{\mu}G_A$.
\item $G_{AB} G_{Aa}/G_{Aa}\subset_{\mu} G_A/G_{Aa}$
\end{enumerate}
\end{proposition}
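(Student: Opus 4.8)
The plan is to reduce both equivalences to machinery already in place: (1)$\Leftrightarrow$(2) will follow from the essential uniqueness of quasi-invariant measures, and (2)$\Leftrightarrow$(3) will be a direct application of Corollary \ref{WIF_quotient}. Throughout I use the note in Definition \ref{mu-independence} that $\pi_A^{-1}[\o(a/AB)]=G_{AB}G_{Aa}$, so that condition (1) reads literally as $\mu_A(G_{AB}G_{Aa})>0$, where $\mu_A$ is the Haar measure on the locally compact group $G_A$. Before anything else I would record that $G_{AB}G_{Aa}$ is a Borel subset of $G_A$: in the locally compact Polish case this is Remark \ref{Borel_H_1H_2} applied to the closed subgroups $G_{AB},G_{Aa}\le G_A$, while in the compact case $G_{AB}G_{Aa}$ is the continuous image of the compact set $G_{AB}\times G_{Aa}$, hence compact and a fortiori Borel.

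For (1)$\Leftrightarrow$(2), the implication (2)$\Rightarrow$(1) is immediate, since the Haar measure $\mu_A$ is itself quasi-invariant (Remark/Definition \ref{Haar_measure}), so specializing the universally quantified (2) to $\nu=\mu_A$ gives (1). For the converse I would invoke Fact \ref{quasi_inv_and_equivalence} with the trivial closed subgroup $\{e\}$, so that $G_A/\{e\}=G_A$ and every quasi-invariant measure on $G_A$ is equivalent to $\mu_A$; hence $\mu_A(G_{AB}G_{Aa})>0$ forces $\nu(G_{AB}G_{Aa})>0$ for every quasi-invariant $\nu$ on $G_A$, which is exactly (2).

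For (2)$\Leftrightarrow$(3), the key structural observation is that $G_{AB}G_{Aa}$ is saturated for the right $G_{Aa}$-action, i.e.\ a union of left cosets of $G_{Aa}$: for $g\in G_{AB}$ and $h\in G_{Aa}$ one has $(gh)G_{Aa}=gG_{Aa}\subseteq G_{AB}G_{Aa}$. Writing $\pi\colon G_A\to G_A/G_{Aa}$ for the canonical projection, this gives $\pi^{-1}[G_{AB}G_{Aa}/G_{Aa}]=G_{AB}G_{Aa}$. Since $G_A$ is $\sigma$-compact (it is compact, or closed in a locally compact Polish group and hence itself locally compact Polish and $\sigma$-compact) and $G_{Aa}$ is a closed subgroup, Corollary \ref{WIF_quotient} applies with $G:=G_A$, $H:=G_{Aa}$, and the Borel set $G_{AB}G_{Aa}/G_{Aa}$, yielding $G_{AB}G_{Aa}\subset_{\mu}G_A \Leftrightarrow G_{AB}G_{Aa}/G_{Aa}\subset_{\mu}G_A/G_{Aa}$, which is precisely (2)$\Leftrightarrow$(3). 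Chaining the two equivalences then proves the proposition.

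The one point I expect to be the main obstacle is verifying that $G_{AB}G_{Aa}/G_{Aa}$ really is a Borel subset of $G_A/G_{Aa}$, as required before Corollary \ref{WIF_quotient} can be invoked. In the compact case this is painless: $G_{AB}G_{Aa}$ is compact and $\pi$ is continuous, so its image is compact and hence Borel. In the locally compact Polish case I would argue that $\pi$ is a continuous open surjection and $S:=G_{AB}G_{Aa}$ is Borel and saturated, so that both $\pi[S]$ and $\pi[G_A\setminus S]=(G_A/G_{Aa})\setminus\pi[S]$ are continuous images of Borel sets, hence analytic; therefore $\pi[S]$ is both analytic and co-analytic and so Borel by Suslin's theorem. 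Once this Borel-ness is secured, everything else is a routine assembly of the cited results.
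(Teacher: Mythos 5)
Your proof is correct and takes essentially the same route as the paper's: the paper handles $(1)\Leftrightarrow(2)$ by citing Definition~\ref{mu-independence} (implicitly using, as you make explicit via Fact~\ref{quasi_inv_and_equivalence}, that every quasi-invariant measure on $G_A$ is equivalent to the Haar measure) and handles $(2)\Leftrightarrow(3)$ by citing Corollary~\ref{WIF_quotient}. The only difference is that you verify the saturation and Borel-ness hypotheses that the paper leaves implicit, which is a strictly more careful rendering of the same argument.
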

\begin{proof}
The direction $(1)\Leftrightarrow (2)$ comes from Definition \ref{mu-independence}. The direction $(2)\Leftrightarrow (3)$ comes from Corollary \ref{WIF_quotient}.
\end{proof}
\noindent We recall several closure operations defined in \cite{Kru5} and introduce a new closure operation. 
\begin{definition}\cite[Definition 2.4]{Kru5}\label{acl_dcl}
For $A\subset X$,
\begin{itemize}
	\item $\Acl(A):=\{x\in X|\ |\o(x/A)|\le \omega\}$;
	\item $\acl(A):=\{x\in X|\ |\o(x/A)|<\omega \}$; and
	\item $\dcl(A):=\{x\in X|\ |\o(x/A)|=1  \}$.
\end{itemize}
\end{definition}
\begin{definition}\label{mcl}
For $A\subset X$, let $\mcl(A):=\{x\in X|\ G_{Ax}\subset_{\mu} G_A\}$.
\end{definition}
\begin{remark}\label{relation_between_closures}
Let $(X,G)$ be a $\mu$-structure. For $A\subset X$ finite, $\acl(A)\subset \Acl(A)\subset \mcl(A)$. If $G$ is compact, then $\acl(A)=\Acl(A)=\mcl(A)$
\end{remark}
\begin{proof}
Because of the $\sigma$-additivity, $\Acl(A)\subset \mcl(A)$. If $G$ is compact, then  $x\in \mcl(A)$ if and only if $[G_A:G_{Ax}]$ is finite. So $\mcl(A)\subset \acl(A)$.
\end{proof}

Next we show that $\mu$-independence satisfies the basic $4$ axioms.
\begin{theorem}\label{properties_mu-independence}
Let $(X,G)$ be a $\mu$-structure. Let $a,b\in X$ and $A,B,C\subset X$.
\begin{enumerate}
	\item (Invariance) For all $g\in G$, $a\indo^{\mu}_A B\Leftrightarrow ga\indo^{\mu}_{g[A]}g[B]$.
	\item (Symmetry) $a\indo^{\mu}_C b \Leftrightarrow b\indo^{\mu}_C a$.
	\item (Transitivity) Suppose $G$ is compact. Assume $A\subset B\subset C$. Then $a\indo^{\mu}_A B$ and $a\indo^{\mu}_B C$ if and only if $a\indo^{\mu}_A C$.
	\item (Extension) Suppose $(X,G)$ is small and $A,B$ are finite. There is $a'\in \o(a/A)$ such that $a'\indo^{\mu}_A B$.
	\item $a\in \mcl(A)$ if and only if for all finite $D\subset X$, $a\indo^{\mu}_A D$.
\end{enumerate}
\end{theorem}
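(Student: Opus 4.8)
The plan is to derive invariance, symmetry, item (5), and extension directly from basic properties of the Haar measure together with Proposition \ref{equiv_mu_indo}, and to isolate transitivity as the only genuine difficulty. Throughout I use the reformulation $a\indo^{\mu}_A B\iff\mu_A(G_{AB}G_{Aa})>0$; in both admissible cases the product sets involved are Borel, since a product of closed subgroups of a compact group is compact, and in the locally compact Polish case Remark \ref{Borel_H_1H_2} applies.

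For invariance, I would note that conjugation $c_g\colon h\mapsto ghg^{-1}$ restricts to a topological group isomorphism $G_A\to G_{g[A]}=gG_Ag^{-1}$ carrying $G_{AB}G_{Aa}$ onto $G_{g[A]g[B]}G_{g[A]ga}$; since an isomorphism of locally compact groups sends the Haar measure to a Haar measure and hence preserves null sets, positivity transfers, which is (1). For symmetry, Proposition \ref{equiv_mu_indo}(2) turns the two sides into $\mu_C(G_{Cb}G_{Ca})>0$ and $\mu_C(G_{Ca}G_{Cb})>0$; as $(G_{Ca}G_{Cb})^{-1}=G_{Cb}G_{Ca}$ and inversion sends the left Haar measure to a right Haar measure equivalent to it through the strictly positive modular function, inversion preserves null sets and (2) follows.

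Item (5) is then immediate: because $e\in G_{AD}$ we have $G_{Aa}\subseteq G_{AD}G_{Aa}$, so $a\in\mcl(A)$ (i.e. $\mu_A(G_{Aa})>0$) forces $a\indo^{\mu}_A D$ for every finite $D$; conversely, taking $D$ equal to the tuple $a$ gives $G_{AD}=G_{Aa}$ and $\o(a/Aa)=\{a\}$, so $a\indo^{\mu}_A a$ reads exactly $\mu_A(G_{Aa})>0$. For extension, smallness supplies countably many $G_{AB}$-orbits on $X^n$ (by the usual coordinate-by-coordinate induction), so $\o(a/A)$ splits into countably many suborbits $\o(a_i/AB)$; pulling back along $\pi_A$ partitions $G_A$ into the Borel sets $G_{AB}G_{Aa_i}$, and since $\mu_A(G_A)>0$, $\sigma$-additivity forces $\mu_A(G_{AB}G_{Aa_i})>0$ for some $i$, so $a'=a_i$ witnesses (4).

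The real work is transitivity, where I restrict to $G$ compact. Here $A\subset B\subset C$ gives $G_{AB}=G_B$ and $G_{BC}=G_{AC}=G_C$, and Proposition \ref{equiv_mu_indo}(3) lets me pass to the quotients: writing $\nu_A$ and $\nu_B$ for the quasi-invariant pushforward-Haar measures on $\o(a/A)=G_A/G_{Aa}$ and $\o(a/B)=G_B/G_{Ba}$ (Remark \ref{push-forward_regular}), the three conditions become $\nu_A(\o(a/B))>0$, $\nu_B(\o(a/C))>0$, and $\nu_A(\o(a/C))>0$, along the chain $\o(a/C)\subseteq\o(a/B)\subseteq\o(a/A)$. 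The crux is that, since $G_{Ba}=G_B\cap G_{Aa}$, the inclusion $\o(a/B)\hookrightarrow\o(a/A)$ is a $G_B$-equivariant identification with $G_B/G_{Ba}$; so when $\nu_A(\o(a/B))>0$ the restriction $\nu_A\restriction\o(a/B)$ is a nonzero $G_B$-quasi-invariant regular Borel measure on $G_B/G_{Ba}$ and is therefore equivalent to $\nu_B$ by Fact \ref{quasi_inv_and_equivalence}. Granting this, $\nu_A$ and $\nu_B$ have the same null sets inside $\o(a/B)$: the forward implication combines $\nu_A(\o(a/B))>0$ with $\nu_B(\o(a/C))>0$ to get $\nu_A(\o(a/C))>0$, while the converse gets $\nu_A(\o(a/B))>0$ from monotonicity and then $\nu_B(\o(a/C))>0$ from the equivalence. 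I expect the main obstacle to be verifying that $\nu_A\restriction\o(a/B)$ really is a regular quasi-invariant Borel measure so that Fact \ref{quasi_inv_and_equivalence} applies; this is exactly where compactness of $G$ enters, ensuring that $\o(a/B)=G_BG_{Aa}/G_{Aa}$ is closed (a product of compact subgroups) and hence Borel, and that all measures in sight are finite and Radon.
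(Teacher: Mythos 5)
Your proof is correct, and for items (1), (2), (4) and (5) it fills in exactly the routine verifications that the paper leaves to the reader (its proof of this theorem only says these are easy, pointing at quasi-invariance for symmetry and $\sigma$-subadditivity for extension). One cosmetic remark on (4): the pullback $\pi_{A,a}^{-1}[\o(a_i/AB)]$ is the double coset $G_{AB}h_iG_{Aa}=G_{AB}G_{Aa_i}h_i$ (where $a_i=h_ia$), not $G_{AB}G_{Aa_i}$ itself; since the two differ by a right translation, and right translations scale the left Haar measure by the strictly positive modular function, positivity is unaffected and your conclusion stands. Where you genuinely depart from the paper is transitivity. The paper devotes a separate subsection to it: it first proves Lemma \ref{key lemma} --- for closed subgroups $H_1,H_2$ of $H$ with $\mu_H(H_1H_2)>0$, $H_3=H_1\cap H_2$, and $H_3$-saturated closed sets $A_i\subseteq H_i$, one has $\mu_{H_1}(A_1)>0$ and $\mu_{H_2}(A_2)>0$ iff $\mu_H(A_1A_2)>0$ --- by embedding $H_3$ diagonally into $H_1\times H_2$, factoring $(h_1,h_2)\mapsto h_1h_2^{-1}$ through $(H_1\times H_2)/H_3$, and comparing the push-out of $\mu_H$ with the product Haar measure via Corollary \ref{WIF_quotient} and Fact \ref{quasi_inv_and_equivalence}; Theorem \ref{transitivity} then uses this lemma for one direction and a second diagram of the same kind for the other, whose Claim \ref{G_CG_BG_AaG_cG_Aa} is the same coset identity $G_CG_{Aa}\cap G_B=G_CG_{Ba}$ that is implicit in your equivariant identification of $\o(a/C)$ inside $\o(a/B)$. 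Your argument instead stays inside $G_A/G_{Aa}$: you restrict the invariant pushforward measure $\nu_A$ to the closed, $G_B$-invariant orbit $\o(a/B)\cong G_B/G_{Ba}$ and invoke Fact \ref{quasi_inv_and_equivalence} on $G_B/G_{Ba}$ once nonvanishing of the restriction (which is exactly $a\indo^{\mu}_A B$, available in both directions) is secured. This is shorter and avoids both the product-group construction and Weil's formula, but it is tied to compactness: there $G_BG_{Aa}$ is compact, the continuous bijection $G_B/G_{Ba}\to G_BG_{Aa}/G_{Aa}$ is automatically a homeomorphism, $\nu_A$ is genuinely $G_A$-invariant rather than merely quasi-invariant, and all measures in sight are finite Radon measures, so the restriction is again a nonzero quasi-invariant regular Borel measure. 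The paper's heavier machinery is what makes its Theorem \ref{transitivity} go through also for locally compact Polish $G$, where the orbit need not be closed, the identification is only a Borel isomorphism (Lusin-Suslin), and finiteness fails. Since item (3) of the statement assumes $G$ compact, your proof fully suffices for the statement as given.
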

\begin{proof}
It is easy to prove $(1),(2),(4)$ and $(5)$. Specially, $(2)$ comes from the quasi-invariance and $(4)$ comes from the $\sigma$-subadditivity.
\end{proof}

\subsection{Proof of transitivity}
We now prove transitivity. Fix a $\mu$-structure $(X,G)$.
\begin{lemma}\label{key lemma}
Suppose $H$ is a compact group or a locally compact Polish group. Let $H_1,H_2$ be closed subgroups of $H$ such that $\mu_H(H_1 H_2)>0$. Let $H_3=H_1\cap H_2$. Let $A_i\subset H_i$ be a closed subset for $i=1,2$. Suppose $A_1=A_1 H_3$ and $A_2=H_3 A_2$. Then $\mu_{H_1}(A_1)>0$ and $\mu_{H_2}(A_2)>0$ if and only if $\mu_H(A_1A_2)>0$.
\end{lemma}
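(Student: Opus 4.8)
The plan is to use Weil's integration formula (Lemma \ref{WIF_char}) to translate the positivity of Haar measures into integral inequalities, and then to exploit the factorization structure imposed by the hypotheses $A_1 = A_1 H_3$ and $A_2 = H_3 A_2$. The key observation is that the set $A_1 A_2$ is built from cosets of $H_3 = H_1 \cap H_2$ in a controlled way, so I can relate $\mu_H(A_1 A_2)$ to the fiberwise measures $\mu_{H_1}(A_1)$ and $\mu_{H_2}(A_2)$ by integrating over the quotient. First I would set up the relevant quotient maps. Applying Lemma \ref{WIF_char} with the pair $(H_1, H_3)$ lets me compute $\mu_{H_1}(A_1)$ as an integral over $H_1/H_3$ of the $H_3$-fiber measures; since $A_1 = A_1 H_3$ is a union of right $H_3$-cosets, each inner integral is either $\mu_{H_3}(H_3) = \mu_{H_3}$-mass of the full fiber or zero, so $\mu_{H_1}(A_1)>0$ is equivalent to positivity of the image of $A_1$ in $H_1/H_3$. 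The symmetric statement holds for $A_2 = H_3 A_2$ in $H_3 \backslash H_2$.

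Next I would analyze $A_1 A_2$ inside $H$ using the hypothesis $\mu_H(H_1 H_2)>0$. The idea is that the multiplication map $m : H_1 \times H_2 \to H$ has image $H_1 H_2$, and its fibers are parametrized by $H_3$ via $(h_1, h_2) \mapsto (h_1 s, s^{-1} h_2)$ for $s \in H_3$. Because $A_1$ is right-$H_3$-invariant and $A_2$ is left-$H_3$-invariant, the product set $A_1 A_2$ is precisely the image under $m$ of $A_1 \times A_2$, and this image is saturated with respect to the fiber equivalence. The plan is then to write $\mu_H$ restricted to $H_1 H_2$ via Weil's formula for a suitable descent through $H_3$, so that $\mu_H(A_1 A_2)$ becomes an integral whose integrand is positive exactly on the projection of $A_1 \times A_2$ to the double-coset-type quotient $(H_1/H_3) \times (H_3 \backslash H_2)$ — or more precisely to the orbit space of $H_1 \times H_2$ under the diagonal $H_3$-action, which is what parametrizes $H_1 H_2$.

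For the reverse direction I would argue by contraposition: if either $\mu_{H_1}(A_1) = 0$ or $\mu_{H_2}(A_2) = 0$, then by the coset analysis above the projection of the corresponding factor is null, hence the product's image in the relevant quotient is null (using that a product of sets, one of which projects to measure zero, projects to measure zero by the factorized integral), forcing $\mu_H(A_1 A_2)=0$. The forward direction uses positivity of both factors together with the hypothesis $\mu_H(H_1 H_2)>0$, which guarantees that the quasi-invariant measure on the quotient parametrizing $H_1 H_2$ is not identically trivial, so positive-measure inputs yield a positive-measure product. Here the role of the hypothesis $\mu_H(H_1 H_2)>0$ is essential: without it the ambient product set could itself be null and the implication would collapse.

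The main obstacle I anticipate is making rigorous the fibration of $H_1 H_2$ by the diagonal $H_3$-action and identifying the correct quotient measure so that the two applications of Weil's formula (one on $H_1$, one on $H_2$, and one on $H$) are compatible. The measures $\mu_{H_1}$, $\mu_{H_2}$, $\mu_{H_3}$, and $\mu_H$ are only canonical up to scaling, and the Radon–Nikodym factors $\rho$ from Theorem \ref{Weil_integration_formula} must be tracked; since these are continuous and strictly positive, they do not affect positivity, but one must be careful that the map $m$ restricted to $A_1 \times A_2$ is Borel* (invoking the Lusin–Suslin Theorem \ref{Lusin-Suslin} in the Polish case, and openness of $\pi$ as in Remark \ref{push-forward_regular} in the compact case) so that all the pushforward and pushout measures involved are genuine Borel measures. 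Once the fibration and its quotient measure are correctly identified, the equivalence reduces to the elementary fact that a product of two positive-measure sets has positive measure in a quotient on which the ambient set itself has positive measure.
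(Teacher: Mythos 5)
Your setup is essentially the paper's. Your multiplication map $m\colon H_1\times H_2\to H_1H_2$, whose fibers are the orbits of the diagonal copy of $H_3$, is (up to inverting the second coordinate) the paper's map $\delta(h_1,h_2)=h_1h_2^{-1}$, and your observation that $A_1\times A_2$ is exactly the full preimage of $A_1A_2$ under $m$ --- which is where the hypotheses $A_1=A_1H_3$ and $A_2=H_3A_2$ enter --- is precisely the paper's Claim~\ref{A1A2=A1timesA2}. Up to that point the two arguments agree.

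The genuine gap is at the step you yourself single out as the ``main obstacle'': you flag it but never resolve it, and the justification you offer in its place does not suffice. You argue that $\mu_H(H_1H_2)>0$ guarantees the quotient measure coming from $\mu_H$ is ``not identically trivial, so positive-measure inputs yield a positive-measure product.'' But non-triviality alone cannot transfer positivity between two \emph{different} measures on the orbit space $(H_1\times H_2)/H_3$: a priori, the measure pulled back from $\mu_H\restriction H_1H_2$ and the Weil quotient measure attached to $\mu_{H_1}\times\mu_{H_2}$ could have different null sets, and there is no canonical ``correct quotient measure'' to identify. The missing ingredient --- the crux of the paper's proof --- is Fact~\ref{quasi_inv_and_equivalence}: on a quotient of a locally compact group by a closed subgroup, \emph{any} two quasi-invariant measures are equivalent, i.e.\ have the same null sets. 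The paper verifies that the push-out $\ov{\delta}^*(\mu_H)$ is a non-zero (because $\mu_H(H_1H_2)>0$) quasi-invariant Borel measure on $(H_1\times H_2)/H_3$, the quasi-invariance coming from the fact that the $(H_1\times H_2)$-action on the quotient corresponds under $\ov{\delta}$ to $x\mapsto g_1xg_2^{-1}$ on $H_1H_2$ and Haar measure is left-invariant and right-quasi-invariant. With Fact~\ref{quasi_inv_and_equivalence} in hand, the choice of quasi-invariant measure is irrelevant, and Corollary~\ref{WIF_quotient} (applied in the group $H_1\times H_2$ with closed subgroup the diagonal $H_3$) yields the whole lemma as a single chain of equivalences: $\mu_{H_1}(A_1)>0$ and $\mu_{H_2}(A_2)>0$ iff $(\mu_{H_1}\times\mu_{H_2})(A_1\times A_2^{-1})>0$ iff the image of this saturated set in the quotient has positive measure for any (hence every) quasi-invariant measure iff $\ov{\delta}^*(\mu_H)$ gives it positive measure iff $\mu_H(A_1A_2)>0$. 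In particular both directions of the lemma come out simultaneously, so your separate contrapositive argument is unnecessary, as is your worry about $m\restriction A_1\times A_2$ being Borel*: everything is phrased in terms of preimages under the continuous map $\delta$, and $A_1A_2$ is automatically Borel (compact in the compact case, $\sigma$-compact in the Polish case).
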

\begin{proof}
Consider $H_3$ as a closed subgroup of $H_1\times H_2$ induced from the injection $\iota :\ H_3\rightarrow H_1\times H_2, h\mapsto (h,h)$. Define the following projection $\delta: H_1\times H_2\rightarrow H_1H_2,(h_1,h_2)\mapsto h_1h_2^{-1}$. Since the kernel of $\delta$ is $H_3$, we have the following diagram:
$$\begin{tikzcd}
H_1\times H_2 \ar{r}{\pi} \ar[dr,"\delta"'] & (H_1\times H_2)/H_3\ar{d}{\ov{\delta}}\\
& H_1H_2\\
\end{tikzcd}$$

\noindent Since $\mu_H(H_1 H_2)>0$, the push-out measure $\ov{\delta}^*(\mu_H)$ is a quasi-invariant Borel measure on $(H_1\times H_2)/H_3$. Take $A_1=A_1H_3$ and $A_2=H_3A_2$ so that $\ov{ \pi}(A_1A_2)=A_1A_2/H_3$.
\begin{claim}\label{A1A2=A1timesA2}
$A_1\times (A_2)^{-1}=\delta^{-1}(A_1A_2)(=\pi^{-1}\circ \ov{ \delta}^{-1}(A_1A_2))$.
\end{claim}
\begin{proof}
Let $(h_1,h_2)\in H_1\times H_2$ be such that $h_1h_2^{-1}=a_1a_2\in A_1A_2$ for some $a_1\in A_1$ and $a_2\in A_2$. We have $h:=a_1^{-1}h_1=a_2h_2\in H_1\cap H_2=H_3$. Then $h_1=a_1h\in A_1H_3$ and $h_2=a_2^{-1}h\in A_2^{-1}H_3=(H_3A_2)^{-1}=A_2^{-1}$.
\end{proof}
\noindent By Fact \ref{quasi_inv_and_equivalence}, Corollary \ref{WIF_quotient}, and Claim \ref{A1A2=A1timesA2}, we have that
\begin{align*}
\mu_{H_1}(A_1)>0,\mu_{H_2}(A_2)>0&\Leftrightarrow (\mu_{H_1}\times\mu_{H_2})(A_1\times (A_2)^{-1})>0\\
&\Leftrightarrow A_1\times (A_2)^{-1}/H_3\subset_{\mu} H_1\times H_2/H_3\\
&\Leftrightarrow \ov{\delta}^*(\mu_H)(A_1\times (A_2)^{-1}/H_3)>0\\
&\Leftrightarrow \mu_H(A_1A_2)>0,
\end{align*} 
where $\mu_{H_1}\times\mu_{H_2}$ is the product measure of $\mu_{H_1}$ and $\mu_{H_2}$.
\end{proof}

\begin{corollary}\label{cor 1}
Let $H$ be a locally compact Polish group or a compact group, and let $H_1,H_2$ be closed subgroups of $H$ with $\mu_H(H_1H_2)>0$. For any $A_1\subset H_1$ with $\mu_{H_1}(A_1)>0$, then $\mu_H(A_1H_2)>0$.
\end{corollary}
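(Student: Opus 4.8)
The plan is to derive this directly from Lemma \ref{key lemma} by specializing $A_2$ to the whole group $H_2$ and arranging that the first factor is $H_3$-saturated, where $H_3 = H_1 \cap H_2$. With $A_2 := H_2$ the hypotheses on the second factor are immediate: $A_2$ is closed, $H_3 A_2 = H_2 = A_2$ since $H_3 \subseteq H_2$, and $\mu_{H_2}(H_2) > 0$ because the Haar measure is a nonzero measure. The hypothesis $\mu_H(H_1 H_2) > 0$ assumed in the corollary is exactly the one required by the key lemma. So the only real work is to replace $A_1$ by a closed, $H_3$-saturated subset of $H_1$ that still has positive $\mu_{H_1}$-measure and whose product with $H_2$ sits inside $A_1 H_2$.

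First I would pass to a compact core of $A_1$. Since $\mu_{H_1}$ is a regular Radon measure (Remark/Definition \ref{Haar_measure}), inner regularity yields a compact set $K \subseteq A_1$ with $\mu_{H_1}(K) > 0$. I then set $A_1' := K H_3$. Because $K$ is compact and $H_3$ is closed in a topological group, the product $K H_3$ is closed, hence Borel; moreover $A_1' H_3 = K H_3 H_3 = K H_3 = A_1'$, so $A_1'$ is $H_3$-saturated, and $K \subseteq A_1'$ gives $\mu_{H_1}(A_1') \ge \mu_{H_1}(K) > 0$. Thus $A_1'$ and $A_2 = H_2$ satisfy all the hypotheses of Lemma \ref{key lemma}.

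Applying the key lemma to $A_1'$ and $A_2$ (the implication from positive factor-measures to positive product-measure) yields $\mu_H(A_1' A_2) > 0$. Finally I would unwind the products: since $H_3 \subseteq H_2$ we have $A_1' A_2 = K H_3 H_2 = K H_2 \subseteq A_1 H_2$, and $K H_2$ is itself closed (compact times closed subgroup), hence Borel. Therefore $\mu_H(A_1 H_2) \ge \mu_H(K H_2) = \mu_H(A_1' A_2) > 0$, as desired. The one point needing care is precisely this saturation-and-closedness bookkeeping: the key lemma is stated for closed sets, whereas $A_1$ is merely of positive measure, and the reduction to the compact $K$ together with the identity $K H_3 H_2 = K H_2$ is exactly what bridges that gap. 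Everything else is a direct substitution into Lemma \ref{key lemma}.
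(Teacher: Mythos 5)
Your proof is correct and follows the route the paper intends: Corollary \ref{cor 1} is stated without proof as an immediate consequence of Lemma \ref{key lemma}, obtained exactly by specializing $A_2 = H_2$. Your additional bookkeeping --- extracting a compact core $K \subseteq A_1$ of positive measure by inner regularity of the Haar measure and saturating it to $K H_3$ so that the lemma's closedness and saturation hypotheses are met, then noting $K H_3 H_2 = K H_2 \subseteq A_1 H_2$ --- is precisely the step the paper leaves implicit, and you carry it out correctly.
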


\begin{theorem}\label{transitivity}
Suppose $G$ is a locally compact Polish group or a compact group. Let $a\in X$ and let $A\subset B\subset C$. Then $a\indo^{\mu}_A B$ and $a\indo^{\mu}_B C$ if and only if $a\indo^{\mu}_A C$.
\end{theorem}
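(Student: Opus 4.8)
The plan is to unwind all three $\mu$-independence statements into positivity statements for Haar measures of products of closed subgroups, and then to read off the equivalence from a single application of the key lemma (Lemma \ref{key lemma}). First I would set $H = G_A$, $K = G_B$, $L = G_C$ and $S = G_{Aa}$, so that $L \subseteq K \subseteq H$ and $S$ is a closed subgroup of $H$. Since $A \subseteq B \subseteq C$ we have $AB = B$, $BC = C$, $AC = C$, and since fixing $B$ forces fixing $A$ we get $G_{Ba} = G_B \cap G_a = K \cap S$. Using Definition \ref{mu-independence} together with the identity $\pi_A^{-1}[\o(a/AB)] = G_{AB}G_{Aa}$, the three relations become: $a\indo^{\mu}_A B \Leftrightarrow \mu_H(KS) > 0$; $a\indo^{\mu}_B C \Leftrightarrow \mu_K\!\big(L(K\cap S)\big) > 0$; and $a\indo^{\mu}_A C \Leftrightarrow \mu_H(LS) > 0$. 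Thus the theorem reduces to the purely group-theoretic claim that $\mu_H(KS) > 0$ and $\mu_K(L(K\cap S)) > 0$ hold together if and only if $\mu_H(LS) > 0$.

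To prove this I would apply Lemma \ref{key lemma} with $H_1 = K$, $H_2 = S$ (so $H_3 = K \cap S$), and with $A_1 = L(K\cap S)$ and $A_2 = S$. The invariance hypotheses are immediate: $A_1 H_3 = L(K\cap S)(K\cap S) = A_1$ and $H_3 A_2 = (K\cap S)S = S = A_2$; moreover $A_1 A_2 = L(K\cap S)S = LS$ and $\mu_{H_2}(A_2) = \mu_S(S) > 0$ automatically. Granting the side hypothesis $\mu_H(H_1 H_2) = \mu_H(KS) > 0$, the lemma gives $\mu_K(L(K\cap S)) > 0 \Leftrightarrow \mu_H(LS) > 0$. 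The two implications then follow. For the forward direction, assuming $\mu_H(KS)>0$ and $\mu_K(L(K\cap S))>0$, the side hypothesis is exactly the first assumption, so the equivalence delivers $\mu_H(LS) > 0$; this half could equally be obtained from the cleaner Corollary \ref{cor 1}. For the converse, assuming $\mu_H(LS) > 0$, the inclusion $LS \subseteq KS$ gives $\mu_H(KS) \ge \mu_H(LS) > 0$, which both recovers the first relation and verifies the side hypothesis, whereupon the full biconditional of Lemma \ref{key lemma} yields $\mu_K(L(K\cap S)) > 0$.

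The point that needs care, and which I expect to be the main obstacle, is the applicability of Lemma \ref{key lemma} to $A_1 = L(K\cap S)$, which a priori is only a product of two closed subgroups rather than a closed set. When $G$ is compact this is harmless: $L \times (K\cap S)$ is compact and $A_1$ is its continuous image under multiplication, hence compact, hence closed. When $G$ is locally compact Polish, $A_1$ is merely Borel by Remark \ref{Borel_H_1H_2}, so I would observe that the proof of Lemma \ref{key lemma} uses only the measurability of $A_1, A_2$ and the invariance relations $A_1 = A_1 H_3$, $A_2 = H_3 A_2$ (via the identity $A_1 \times A_2^{-1} = \delta^{-1}(A_1 A_2)$ and the push-forward/push-out chain of equivalences), and therefore remains valid for such Borel sets. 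This measurability bookkeeping, rather than any genuinely new idea, is where the real work of the converse direction lies.
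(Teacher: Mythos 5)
Your proof is correct and follows essentially the same route as the paper: your forward direction is exactly the paper's application of Corollary \ref{cor 1} (with $H=G_A$, $H_1=G_B$, $H_2=G_{Aa}$, $A_1=G_CG_{Ba}$), and your single application of Lemma \ref{key lemma} with $A_1=G_CG_{Ba}$, $A_2=G_{Aa}$ for the converse reproduces, with the same parameters, the inline push-out argument the paper writes out via its Claim \ref{G_CG_BG_AaG_cG_Aa}. Your caveat that $A_1=G_CG_{Ba}$ is only Borel rather than closed in the locally compact Polish case is a genuine point of care --- the paper itself applies its lemma to this set without comment --- and your fix (the lemma's proof uses only Borel measurability of $A_1$, $A_2$, $A_1A_2$ together with the $H_3$-invariance identities) is the right one.
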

\begin{proof}
$(\Rightarrow)$ Suppose $a\indo^{\mu}_A B$ and $a\indo^{\mu}_B C$. By Proposition \ref{equiv_mu_indo}, we have $G_B G_{Aa}\subset_{\mu} G_A$ and $G_C G_{Ba}\subset_{\mu} G_B$. 
\begin{claim}
$G_C G_{Aa}\subset_{\mu }G_A$.
\end{claim}
\begin{proof}
We have $G_{Ca}=G_C G_{Aa}$. Use Corollary \ref{cor 1} for $H=G_A$, $H_1=G_B$, $H_2=G_{Aa}$, $A_1=G_C G_{Ba}$ and we have $A_1H_2=G_C G_{Ba} G_{Aa}=G_C G_{Aa}\subset_{\mu} G_A$.
\end{proof}
\medskip

$(\Leftarrow)$ Suppose $G_C G_{Aa}\subset_{\mu} G_A$. Since $G_C G_{Aa}\subset G_B G_{Aa}\subset G_A$, we have $G_B G_{Aa}\subset_{\mu} G_A$. It remains to show that $G_C G_{Ba}\subset_{\mu }G_B$.
\begin{claim}
$G_C G_{Ba}\subset_{\mu }G_B$.
\end{claim}
\begin{proof}
We have the followings
\begin{itemize}
	\item $G_B G_{Aa}\subset_{\mu} G_A$.
	\item $G_C\subset G_B$.
	\item $G_{Ba}=G_B\cap G_{Aa}$.
\end{itemize}
Consider the following map $\delta:G_B\times G_{Aa}\rightarrow G_B G_{Aa}, (x,y)\mapsto xy^{-1}$ and it induces the following digram:
$$\begin{tikzcd}
G_B\times G_{Aa} \ar{r}{\pi} \ar[rd,"\delta"'] & (G_B\times G_{Aa})/G_{Ba} \ar{d}{\ov{\delta}}\\
& G_B G_{Aa}
\end{tikzcd}$$

\begin{claim}\label{G_CG_BG_AaG_cG_Aa}
$\delta^{-1}(G_C G_{Aa})=G_C G_{Ba}\times G_{Aa}$
\end{claim}
\begin{proof}
It is enough to show that $\pi^{-1}(G_C G_{Aa})\subset G_C G_{Ba}\times G_{Aa}(*)$. Choose $x\in G_B$ and $y\in G_{Aa}$ such that $xy^{-1}\in G_C G_{Aa}$. Then $x\in G_C G_{Aa}y=G_C G_{Aa}$ and $x\in G_C G_{Aa}\cap G_B$. We show that $G_C G_{Aa}\cap G_B=G_C G_{Ba}$. It is enough to show that $G_C G_{Aa}\cap G_B\subset G_C G_{Ba}$. Let $z\in G_C G_{Aa}\cap G_B$ and let $u\in G_C,\ v\in G_{Aa}$ be such that $uv=z$. Then $v=u^{-1}z\in G_C G_B =G_B$ and $v\in G_{Aa}\cap G_B=G_{Ba}$. Therefore $z\in uG_{Ba}\subset G_C G_{Ba}$. So $x\in G_C G_{Ba}$ and $(*)$ holds.
\end{proof}
\noindent Since $G_B G_{Aa}\subset_{\mu}G_A$, the push-out measure $\ov{\delta}^*(\mu_{G_A})$ is a quasi-invariant Borel measure on $(G_B\times G_{Aa})/G_{Ba}$. By Fact \ref{quasi_inv_and_equivalence}, Corollary \ref{WIF_quotient}, and Claim \ref{G_CG_BG_AaG_cG_Aa}, we have that
\begin{align*}
G_CG_{Aa}\subset_{\mu} G_A&\Leftrightarrow \ov{\delta}^*(\mu_{G_A})((G_C G_{Ba}\times G_{Aa})/G_{Ba})>0\\
&\Leftrightarrow (G_C G_{Ba}\times G_{Aa})/G_{Ba}\subset_{\mu} (G_B\times G_{Aa})/G_{Ba}\\
&\Leftrightarrow G_C G_{Ba}\times G_{Aa}\subset_{\mu} G_B\times G_{Aa}.
\end{align*}
Since $G_CG_{Aa}\subset_{\mu} G_A$, we have that $G_C G_{Ba}\times G_{Aa}\subset_{\mu} G_B\times G_{Aa}$, and $G_C G_{Ba}\subset_{\mu} G_B$. 
\end{proof}
\end{proof}

\subsection{Some description of mu-independence}
%Krupinski's Polish structure, and description of nm-independence in X.
In this subsection, we describe $\mu$-independence intrinsically on $X$ for a $\mu$-structure $(X,G)$(see Theorem \ref{description_mu_independence}). This is crucial to classify small compact $\mu$-groups in Section \ref{Small compact mu-groups}(see Proposition \ref{equvialence_mu_stability}, Remark \ref{mu_rank_measure_complexity}, and Proposition \ref{existence_mu_generic}). Krupinski in \cite{Kru5} described $nm$-independence in $G$-spaces as follows:
\begin{fact}\label{fact:description_nm_indep}\cite[Theorem 2.12]{Kru5}
Let $(X,G)$ be a $G$-space. Let $a,A,B\subset X$ be finite sets. Suppose $\o(a/A)$ is non-meager in its relative topology. Then $a\indo^{nm}_A B$ if and only if $\o(a/AB)$ is non-meager in $\o(a/A)$.  
\end{fact}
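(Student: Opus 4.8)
The plan is to reduce the statement to a transfer of Baire category across the orbit map $\pi_A\colon G_A\to\o(a/A)$, $g\mapsto ga$, and then to extract the needed openness of $\pi_A$ from the Effros open mapping theorem. First I would unwind the definition of $nm$-independence: since $\pi_A^{-1}[\o(a/A)]=G_A$, the relation $a\indo^{nm}_A B$ says exactly that $\pi_A^{-1}[\o(a/AB)]$ is non-meager in $G_A$, and a direct computation gives $\pi_A^{-1}[\o(a/AB)]=G_{AB}G_{Aa}$. This set is right $G_{Aa}$-invariant, hence a union of fibers of $\pi_A$ (the fiber over $ga$ is the left coset $gG_{Aa}$). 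Thus $\pi_A^{-1}[\o(a/AB)]$ is $\pi_A$-saturated, and under the canonical factoring $\overline{\pi}_A\colon G_A/G_{Aa}\to\o(a/A)$, $gG_{Aa}\mapsto ga$, it corresponds to $T:=G_{AB}G_{Aa}/G_{Aa}$, with $\overline{\pi}_A[T]=\o(a/AB)$.

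The key structural input is the hypothesis that $\o(a/A)$ is non-meager in its relative topology. By the Effros open mapping theorem for continuous Polish group actions, this forces the continuous bijection $\overline{\pi}_A$ to be a homeomorphism; equivalently, $\pi_A$ is a continuous open surjection and $\o(a/A)$ is homeomorphic to $G_A/G_{Aa}$ (both Polish, since $G_A$ is a closed subgroup of the Polish group $G$ and $G_{Aa}$ is closed in $G_A$). Under this homeomorphism, the condition that $\o(a/AB)$ be non-meager in $\o(a/A)$ translates to $T$ being non-meager in $G_A/G_{Aa}$, and $a\indo^{nm}_A B$ translates to $q^{-1}[T]$ being non-meager in $G_A$, where $q\colon G_A\to G_A/G_{Aa}$ is the quotient map. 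So it suffices to establish, for an arbitrary $T\subseteq G_A/G_{Aa}$, the category transfer
\[
q^{-1}[T]\ \text{non-meager in}\ G_A \iff T\ \text{non-meager in}\ G_A/G_{Aa}.
\]

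For the forward implication I would argue by contraposition using only that $q$ is continuous and open: if $N\subseteq G_A/G_{Aa}$ is closed and nowhere dense, then $q^{-1}[N]$ is closed, and it is nowhere dense, for if it had nonempty interior $U$ then openness of $q$ would make $q[U]\subseteq q[q^{-1}[N]]=N$ a nonempty open subset of $N$; hence $T$ meager forces $q^{-1}[T]$ meager. The hard part will be the reverse implication, that $T$ non-meager implies $q^{-1}[T]$ non-meager: the image under an open continuous surjection of a nowhere dense set need not be nowhere dense, so one cannot simply push forward a meager cover of $q^{-1}[T]$. Here I would exploit the fibered structure of the quotient of a Polish group by a closed subgroup, combining the existence of a Borel section of $q$ with the Kuratowski--Ulam theorem to conclude that $q$ both preserves and reflects meagerness, i.e. $T$ is meager if and only if $q^{-1}[T]$ is meager. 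This is the standard category-preservation property for quotient maps $G\to G/H$ by closed subgroups, and applying it to $T=G_{AB}G_{Aa}/G_{Aa}$ yields both directions of the Fact.
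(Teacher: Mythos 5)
The paper itself gives no proof of this Fact---it is imported verbatim from \cite{Kru5}---so your attempt can only be measured against the source's argument, and in outline you have reconstructed essentially that argument: translate $a\indo^{nm}_A B$ into non-meagerness of the saturated set $G_{AB}G_{Aa}=\pi_A^{-1}[\o(a/AB)]$, use the hypothesis that $\o(a/A)$ is non-meager in itself through an Effros-type open mapping theorem to identify $\o(a/A)$ with $G_A/G_{Aa}$, and then transfer Baire category along the quotient $q\colon G_A\to G_A/G_{Aa}$. Your reductions are correct, and the easy half of the category transfer (openness of $q$ makes preimages of nowhere dense sets nowhere dense) is argued correctly.

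Two steps need attention. (i) The Effros open mapping theorem is a statement about Polish group actions on separable \emph{metrizable} spaces that are non-meager in themselves; in the stated generality a $G$-space only has $X$ Hausdorff, and the orbit $\o(a/A)$, though separable, need not be metrizable, so the theorem you quote does not literally apply. This is harmless in the situations this paper actually uses the Fact (Polish $G$-spaces, compact and profinite structures), but at face value you must either add a metrizability hypothesis or supply an Effros theorem valid in the wider setting---and that is precisely the nontrivial content hiding behind the citation, not a routine appeal. (ii) Your mechanism for the hard half of the category transfer---a Borel section of $q$ combined with Kuratowski--Ulam---would fail: a Borel section gives only a Borel, not a topological, identification of $G_A$ with $(G_A/G_{Aa})\times G_{Aa}$, and Borel bijections do not preserve meagerness, so Kuratowski--Ulam cannot be pushed through it. The correct tool is the fibered (open-map) form of Kuratowski--Ulam, which needs no section and no Baire property assumption on $T$: if $S\subseteq G_A$ is meager, then for comeagerly many $y\in G_A/G_{Aa}$ the set $S\cap q^{-1}(y)$ is meager in the fiber $q^{-1}(y)$; applying this to the saturated set $S=q^{-1}[T]$, whose trace on each fiber is either empty or the whole fiber, and recalling that each fiber is a coset of the Polish (hence Baire, hence non-meager in itself) group $G_{Aa}$, one concludes that $q^{-1}[T]$ meager forces $T$ meager. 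With these two repairs your outline becomes a complete proof, by the same route as the source.
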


\begin{theorem}\label{description_mu_independence}
Let $(X,G)$ be a $\mu$-space. Let $a,A,B\subset X$ be finite. Suppose
\begin{enumerate}
	\item The canonical map $\ov{\pi}_{A,a}:\ G_A/G_{Aa}\rightarrow \o(a/A),g\mapsto ga$ is Borel*.
	\item There is a locally finite quasi-invariant Borel measure $\nu$ on $(\o(a/A),G_A)$.
\end{enumerate}
Then, $$a \indo^{\mu}_A B\Leftrightarrow\o(a/AB)\subset_{\nu}\o(a/A).$$
\end{theorem}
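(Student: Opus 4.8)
The plan is to transport the computation of $\mu$-independence, which by Proposition \ref{equiv_mu_indo}(3) lives on the homogeneous space $G_A/G_{Aa}$, across the canonical bijection $\ov\pi_{A,a}$ to the orbit $\o(a/A)$. First I would record that $\ov\pi_{A,a}\colon G_A/G_{Aa}\to\o(a/A)$, $gG_{Aa}\mapsto ga$, is a $G_A$-equivariant bijection: it is onto by definition of the orbit, injective because $G_{Aa}$ is precisely the stabilizer of $a$ inside $G_A$, and equivariant since $\ov\pi_{A,a}(h\cdot gG_{Aa})=(hg)a=h\cdot\ov\pi_{A,a}(gG_{Aa})$ for $h\in G_A$. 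Moreover, restricting this bijection to the Borel set $G_{AB}G_{Aa}/G_{Aa}$ yields exactly $\o(a/AB)$, because $G_{Aa}$ fixes $a$ and hence $G_{AB}G_{Aa}\cdot a=G_{AB}\cdot a=\o(a/AB)$.

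Next I would manufacture a quasi-invariant measure on $G_A/G_{Aa}$ out of $\nu$. Using hypothesis $(1)$ that $\ov\pi_{A,a}$ is Borel*, the push-out measure $\ov\pi_{A,a}^{*}(\nu)$ is defined (Remark/Definition \ref{push-forward_out_measure}(2)), and by hypothesis $(2)$ together with Lemma \ref{push-out_regular_measure} it is a regular Borel measure on $G_A/G_{Aa}$. I would then verify that it is quasi-invariant for the $G_A$-action: for a Borel set $E\subseteq G_A/G_{Aa}$ and $h\in G_A$, equivariance and bijectivity of $\ov\pi_{A,a}$ give $\ov\pi_{A,a}(hE)=h\,\ov\pi_{A,a}(E)$, whence $\ov\pi_{A,a}^{*}(\nu)(hE)=\nu\bigl(h\,\ov\pi_{A,a}(E)\bigr)=\Delta(h)\,\nu\bigl(\ov\pi_{A,a}(E)\bigr)=\Delta(h)\,\ov\pi_{A,a}^{*}(\nu)(E)$, where $\Delta$ is the modular factor of the quasi-invariant measure $\nu$.

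With $\ov\pi_{A,a}^{*}(\nu)$ in hand, the equivalence follows by chaining the established facts. By Fact \ref{quasi_inv_and_equivalence} all quasi-invariant measures on $G_A/G_{Aa}$ share the same null sets, so the relation $\subset_{\mu}$ on $G_A/G_{Aa}$ may be tested against the single measure $\ov\pi_{A,a}^{*}(\nu)$. Thus, using Proposition \ref{equiv_mu_indo}(3) and the definition of the push-out,
\begin{align*}
a\indo^{\mu}_A B &\Leftrightarrow G_{AB}G_{Aa}/G_{Aa}\subset_{\mu} G_A/G_{Aa}\\
&\Leftrightarrow \ov\pi_{A,a}^{*}(\nu)\bigl(G_{AB}G_{Aa}/G_{Aa}\bigr)>0\\
&\Leftrightarrow \nu\bigl(\o(a/AB)\bigr)>0\\
&\Leftrightarrow \o(a/AB)\subset_{\nu}\o(a/A),
\end{align*}
which is the desired statement.

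The step I expect to be the main obstacle is the second paragraph: confirming that the push-out $\ov\pi_{A,a}^{*}(\nu)$ is genuinely a regular, quasi-invariant Borel measure. This is exactly where the two hypotheses are needed, the first to define the push-out at all and the second (through Lemma \ref{push-out_regular_measure}) to secure regularity, after which equivariance of $\ov\pi_{A,a}$ upgrades local quasi-invariance of $\nu$ to quasi-invariance on the quotient. Everything else is bookkeeping with the bijection and an appeal to the equivalence of quasi-invariant measures.
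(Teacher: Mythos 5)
Your proof is correct and follows essentially the same route as the paper's: push $\nu$ out along the canonical map $\ov{\pi}_{A,a}$ to get a quasi-invariant measure on $G_A/G_{Aa}$, then chain Proposition \ref{equiv_mu_indo}(3), Fact \ref{quasi_inv_and_equivalence}, and the definition of the push-out to transfer $\subset_{\mu}$ on the quotient to $\subset_{\nu}$ on the orbit. In fact you supply details the paper leaves implicit (the equivariance of $\ov{\pi}_{A,a}$, the verification that the push-out is quasi-invariant, and the identification $\ov{\pi}_{A,a}(G_{AB}G_{Aa}/G_{Aa})=\o(a/AB)$), so no gap.
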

\begin{proof}
For the simplicity, we assume that $A=\emptyset$. Suppose the canonical map $\pi_a:\ G\rightarrow \o(a),g\mapsto ga$ is Borel* and there is a locally finite quasi-invariant Borel measure $\nu$ on $(\o(a),G)$.  Then $\pi$ factors through in the following way:
$$\begin{tikzcd}
G \ar{r}{\pi} \ar[rd,"\pi_a"']& G/G_{a}\ar{d}{\ov{\pi}_a}\\
&\o(a)
\end{tikzcd}$$
Then the push-out measure $\ov {\pi}_a^*(\nu)$ is a quasi-invariant regular Borel measure on $G/G_a$. Then we have that
\begin{align*}
a\indo^{\mu} B&\Leftarrow G_B G_a/G_a\subset_{\mu} G/G_a\\
&\Leftrightarrow \ov {\pi}_a^*(\nu)(G_B G_a/G_a)>0\\
&\Leftrightarrow \nu(\o(a/B))>0\\
&\Leftrightarrow \o(a/B)\subset_{\nu}\o(a).
\end{align*}
\end{proof}
\begin{remark}\label{description_mu_independence_in_some_exmaples}
\begin{enumerate}
	\item If $G$ is a locally compact Polish group and $X$ is a Polish space, then the condition (1) in Theorem \ref{description_mu_independence} holds.
	\item If $G$ is a compact group, then both conditions (1) and (2) in Theorem \ref{description_mu_independence} hold.
\end{enumerate}
\end{remark}
\begin{proof}
(1) It comes from the Lusin-Suslin Theorem.

(2) It comes from the fact that the map $\ov{\pi}_a:\ G/G_{Aa}\rightarrow \o(a/A)$ is a homeomorphism if $G$ is a compact group.
\end{proof}
\begin{remark}
Let $(X,G)$ be a $\mu$-structure. Let $a,A,B\subset X$ be finite. Then there is the canonical map $\ov{\pi}_{A,a}:\ G/G_{Aa}\rightarrow \o(a/A)$. Give a topology on $\o(a/A)$ induced by $\ov{\pi}_{A,a}$, that is, $U\subset \o(a/A)$ is open if and only if $\ov{\pi}_{A,a}^{-1}(U)$ is open. With this topology, there is a quasi-invariant Borel measure $\nu$ on $\o(a/A)$ such that $$a\indo^{\mu}_A B\Leftrightarrow \o(a/AB)\subset_{\nu}\o(a/A).$$ In this setting, our $\mu$-independence coincides exactly with $\mu$-independence of Newelski in the case of profinite structures(c.f. \cite{N1,N2}).
\end{remark}

\begin{definition}\cite{Kru4,Kru5}\label{cptstr}
Let $(X,G)$ be a Polish structure.
\begin{enumerate}
	\item We say $(X,G)$ is {\em a profinite structure} if $X$ is a profinite metric space and $G$ is a profinite group continuously acting on $X$.
	\item We say $(X,G)$ is {\em a compact structure} if $X$ is a compact metric space and $G$ is a compact group continuously acting on $X$.

\end{enumerate} 
\end{definition}

\begin{definition}\cite[Definition 2.12]{Kru5}
Let $(X,G)$ be a compact structure. For finite subsets $a,A,B$ of $X$, we say $a$ is {\em $m$-independent} from $B$ over $A$ if $\o(a/AB)$ is open in $\o(a/A)$, denoted by $a\indo^{m}_A B$.  
\end{definition}
\noindent Krupinski in \cite{Kru5} showed that two independence notions $\indo^{nm}$ and $\indo^m$ coincide in compact structures.
\begin{question}\label{question_equivalence_meager_mu}
For $\mu$-spaces, we can define $nm$-independence. In this case, is $\mu$-independence coincident with $nm$-independence?
\end{question}
\noindent Clearly, $nm$-independence implies $\mu$-independence. In Section \ref{Examples}, we give an example of $\mu$-space $(X,G)$ such that $\mu$-independence and $nm$-independence are different, where $(X,G)$ is a profinite sturcture. In Remark \ref{rem:cpt_str_equiv_mu_nm_independence}, we show that $\indo^{nm}=\indo^{\mu}$ in small, $nm$-stable compact structures.
%\begin{remark}
%For a compact group $G$, any non-meager measurable subset of $G$ has non-zero haar measure. Let $X\subset G$ be non-meager and measurable. Since $X$ is non-meager, its interior is not empty. By compactness, $G$ is covered by finitely many translations of $X$. If $X$ is a null-set, then by $\sigma$-additivity, $G$ is of measure zero, which is impossible. So, in a $\mu$-structure $(X,G)$ with $G$ compact, $\indo^{nm}$ implies $\indo^{\mu}$.
%\end{remark}

\section{mu-stability}\label{Basic model theory for mu-structure}
In this section, we define a rank, called the {\em $\mu$-rank}, coming from $\mu$-independence and using the $\mu$-rank, we define $\mu$-stability for $\mu$-structures. We omit the proof in this section because the proof are exactly same with ones in \cite[Section 3]{Kru5}. For a given well-behaved independence relation(enough to satisfying $(1)-(4)$ in Theorem\ref{mu-independence}, we define a notion of rank having nice properties, for example, {\em Lascar inequality}. For example, in stable theory(also in simple theory), we define the $U$-rank from forking independence, and more generally, in rosy theory, we definite the thorn $U$-rank form thorn forking independence.
\begin{definition}\label{mu-rank}
Let $(X,G)$ be a $\mu$-structure. The {\em $\mu$-rank}, denoted by $\mR$, is the unique ordinal-valued function from the collection of orbits over finite sets satisfying $\mR(a/A)\ge \alpha+1$ iff there is a finite set $B\supset A$ such that $a\depo^{\mu}_A B$ and $\mR(a/B)\ge \alpha$. We define $\mR(a/A)=\infty$ if for any ordinal $\alpha$, $\mR(a/A)\ge \alpha$.

Let $D\subset X^{eq}$ be definable over $A$. Define $\mR(D):=\sup \{\mR(d/A)|\ d\in D\}$.
\end{definition}

\begin{fact}\cite[Remark 5.10, Remark 5.11]{Kru5}\label{fact:stabilizer_clopensets_relativised_mu_independence}
Let $(X,G)$ be a compact $\mu$-structure. Let $G_0$ be a closed subgroup of $G$ having countable index.
\begin{enumerate}
	\item Let $U$ be a clopen subset of $X$. Then $\stab(U)$ is a clopen subgroup of $G$, and so $[G:\stab(U)]\le \omega$.
	\item Suppose $(X,G)$ is small. Then $(X,G_0)$ is small, and $\indo^{\mu}$ and $\mu$-rank computed in $(X,G)$ are the same as in $(X,G_0)$.
\end{enumerate} 
\end{fact}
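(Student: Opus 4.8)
The plan is to treat the two parts separately: part (1) is a topological compactness argument, while part (2) rests on the observation that in a compact group a closed subgroup of countable index is automatically open of finite index. For part (1), I would show directly that $\stab(U)$ is open and then note that an open subgroup of a compact group has finite index. Fix $g_0\in\stab(U)$, so $g_0[U]=U$. Since $X$ is compact and $U$ is clopen, both $U$ and $X\setminus U$ are compact. For each $x\in U$ continuity of the action $\phi\colon G\times X\to X$ at $(g_0,x)$ gives open sets $g_0\in V_x$, $x\in W_x$ with $\phi(V_x\times W_x)\subseteq U$; covering the compact set $U$ by finitely many $W_x$ and intersecting the corresponding $V_x$ yields an open neighbourhood $V_1\ni g_0$ with $g[U]\subseteq U$ for all $g\in V_1$. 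Running the same argument on the compact clopen set $X\setminus U$ gives $V_2\ni g_0$ with $g[X\setminus U]\subseteq X\setminus U$ for $g\in V_2$. For $g\in V_1\cap V_2$ these two inclusions, together with $g$ being a bijection of $X$, force $g[U]=U$, so $V_1\cap V_2\subseteq\stab(U)$; hence $\stab(U)$ is open, and being a subgroup it is clopen. As $G$ is compact the open cosets of $\stab(U)$ partition $G$ into finitely many pieces, so $[G:\stab(U)]$ is finite, in particular $\le\omega$.

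For part (2), the first step is the key reduction. A compact Hausdorff group is a Baire space, and $G=\bigcup_{n<\omega}g_nG_0$ is a countable union of closed cosets; by Baire category some coset, hence $G_0$ itself, has nonempty interior, so $G_0$ is open and therefore of finite index. Consequently, for each finite $A\subset X$ the group $H:=(G_0)_A=G_0\cap G_A$ is an open, finite-index subgroup of $G_A$. Smallness then transfers immediately: each $G_A$-orbit on $X$ is a union of at most $[G_A:H]<\omega$ many $H$-orbits, so if $(X,G)$ has only countably many $G_A$-orbits, then $(X,G_0)$ has only countably many $(G_0)_A$-orbits, i.e.\ $(X,G_0)$ is small.

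The equality of $\indo^{\mu}$ is the technical heart. By Proposition \ref{equiv_mu_indo}, $a\indo^{\mu}_A B$ in $(X,G)$ means $\mu_{G_A}(G_{AB}G_{Aa})>0$, while in $(X,G_0)$ it means $\mu_H\bigl((G_{AB}\cap H)(G_{Aa}\cap H)\bigr)>0$, using $(G_0)_{AB}=G_{AB}\cap H$ and $(G_0)_{Aa}=G_{Aa}\cap H$. Since $H$ is open of finite index, $\mu_H$ is proportional to the restriction of the bi-invariant Haar measure $\mu_{G_A}$ to subsets of $H$, so it suffices to compare positivity of $\mu_{G_A}$ on the two sets, both of which are compact (continuous images of products of compact subgroups) and hence Borel. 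The inclusion $(G_{AB}\cap H)(G_{Aa}\cap H)\subseteq G_{AB}G_{Aa}$ gives one direction. For the converse, decompose $G_{AB}=\bigsqcup_i p_i(G_{AB}\cap H)$ and $G_{Aa}=\bigsqcup_j (G_{Aa}\cap H)q_j$ into their finitely many cosets, so that $G_{AB}G_{Aa}=\bigcup_{i,j}p_i(G_{AB}\cap H)(G_{Aa}\cap H)q_j$; by bi-invariance of $\mu_{G_A}$ each piece has the same measure as $(G_{AB}\cap H)(G_{Aa}\cap H)$, so positivity of the finite union forces positivity of that single set. This establishes $a\indo^{\mu}_A B$ in $(X,G)$ if and only if $a\indo^{\mu}_A B$ in $(X,G_0)$. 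Finally, since $\mu$-rank is defined by a transfinite recursion referring only to finite tuples, finite parameter sets, and the relation $\depo^{\mu}$, a routine induction on the ordinal, fed by this equivalence, shows $\mR$ agrees in $(X,G)$ and $(X,G_0)$. I expect the converse direction of the independence equality to be the main obstacle: one must first recognise that countable index forces openness, so that only finitely many cosets occur, and then organise the coset bookkeeping so that bi-invariance of the Haar measure applies cleanly.
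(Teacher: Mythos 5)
Your strategy is sound, and in part (2) it is actually \emph{more} self-contained than the paper's own argument: the paper invokes Krupinski's Remark 5.11 to reduce everything to the single implication ``$a\indo^{\mu}_A B$ in $(X,G_0)$ implies $a\indo^{\mu}_A B$ in $(X,G)$'' --- which is exactly your easy inclusion direction $(G_{AB}\cap H)(G_{Aa}\cap H)\subseteq G_{AB}G_{Aa}$ --- and never writes out the converse. Your coset decomposition $G_{AB}G_{Aa}=\bigcup_{i,j}p_i(G_{AB}\cap H)(G_{Aa}\cap H)q_j$ together with invariance of the Haar measure supplies precisely the content that the paper delegates to the citation, and your tube-lemma proof that $\stab(U)$ is open in part (1) is the argument the paper imports from Krupinski's Remark 5.10. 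The smallness transfer (counting $H$-orbits inside each $G_A$-orbit) and the transfinite induction for the $\mu$-rank are also the standard steps one would expect.

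The one genuine problem is that you silently assume $G$ is compact, and the paper does not: here ``compact'' qualifies $X$, and the paper's own proof of part (1) explicitly covers ``a compact group or a Polish locally compact group,'' concluding only $[G:\stab(U)]\le\omega$ because an open subgroup of a $\sigma$-compact group has countable index. When $G$ is locally compact Polish but not compact, your claims that $\stab(U)$, $G_0$, and $H=G_0\cap G_A$ have \emph{finite} index all fail, and so does the appeal to bi-invariance of $\mu_{G_A}$ (Haar measure need not be right-invariant outside the compact/unimodular case). The repairs are routine but necessary: countable index (a hypothesis for $G_0$, and a consequence of $\sigma$-compactness for $\stab(U)$) turns your coset decompositions into countable unions, countable additivity still forces some translate $p_i(G_{AB}\cap H)(G_{Aa}\cap H)q_j$ to have positive measure, and one then uses left invariance together with right quasi-invariance via the modular function --- which preserves positivity --- in place of bi-invariance. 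Likewise, measurability of products of closed subgroups should be justified by Remark \ref{Borel_H_1H_2} rather than by compactness of the factors. With those substitutions your proof goes through in the generality the paper intends.
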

\begin{proof}
(1) By the same reasoning in the proof of \cite[Remark 5.10]{Kru5}, we have that $\stab(U)$ is an open subgroup of $G$. For a $\sigma$-compact group(e.g. a compact group or a Polish locally compact group), any open subgroup has countable index. So we have that $[G:\stab(U)]\le \omega$.\\

(2) By the same reasoning in the proof of \cite[Remark 5.11]{Kru5}, it is enough to check that if $a\indo^{\mu}_A B$ in $(X,G_0)$, then $a\indo^{\mu}_A B$ in $(X,G)$. Let $\nu$ be the Haar measure on $G$. Since $G_0$ is a closed subgroup of countable index, we have $\nu(G_0)>0$. From that $a\indo^{\mu}_A B$ in $(X,G_0)$, we have $G_{0AB}G_{0Aa}\subset_{\mu}G_{0A}$ and $\nu(G_{0AB}G_{0Aa})>0$. Since $G_{0AB}G_{0Aa}\subset G_{AB}G_{Aa}$, we have $\nu(G_{AB}G_{Aa})>0$ and $G_{AB}G_{Aa}\subset_{\mu}G$, that is, $a\indo^{\mu}_A B$ in $(X,G)$.
\end{proof}

We list useful properties of the $\mu$-rank coming from the standard forking calculation and transfinite induction(see \cite{Kim,Wagner}).
\begin{proposition}[Lascar inequalities for the $\mu$-rank]\label{Lascar inequality}
Let $a,b,A\subset X^{eq}$ be a finite subsets.
\begin{enumerate}
	\item $a\indo^{\mu}_A b$ implies $\mR(a/Ab)=\mR(a/A)$. The converse holds if $\mR(a/A)<\infty$.
	\item $\mR(a/Ab)+\mR(b/A)\le \mR(ab/A)\le \mR(a/Ab)\oplus)\mR(b/A)$.
	\item Suppose that $\mR(a/Ab)<\infty$ and $\mR(a/A)\ge \mR(a/Ab)\oplus\alpha$. Then, $\mR(b/A)\ge \mR(b/Aa)+\alpha$.
	\item Suppose that $\mR(a/Ab)<\infty$ and $\mR(a/A)\ge \mR(a/Ab)+\omega^{\alpha}n$. Then, $\mR(b/A)\ge \mR(b/Aa)+\omega^{\alpha}n$.
	\item If $a\indo^{\mu}_A b$, then $\mR(ab/A)=\mR(a/Ab)\oplus \mR(b/A)$.
\end{enumerate}
\end{proposition}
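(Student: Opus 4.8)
The plan is to derive all five items formally from the basic $4$ axioms of $\indo^{\mu}$ established in Theorem \ref{properties_mu-independence} (invariance, symmetry, transitivity, and extension), by transfinite induction, exactly as in the classical development of the Lascar inequalities for the $U$-rank (cf. \cite{Kim,Wagner}). No measure theory re-enters at this stage: once the independence relation is known to satisfy the axioms, the rank calculus is purely combinatorial, and the only real content is bookkeeping the difference between ordinal sum $+$ and natural (Hessenberg) sum $\oplus$.

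First I would record a monotonicity lemma: for $A\subseteq B$ one has $\mR(a/B)\le \mR(a/A)$. This is immediate by induction, since any witness $C\supseteq B$ to $a\depo^{\mu}_B C$ becomes, via transitivity applied along $A\subseteq B\subseteq C$, a witness to $a\depo^{\mu}_A C$. Taking $B=Ab$ gives the $\le$ half of (1), and it also yields the easy converse in (1): if $a\depo^{\mu}_A b$ then $Ab$ itself is a forking witness, so $\mR(a/A)\ge \mR(a/Ab)+1>\mR(a/Ab)$ whenever $\mR(a/Ab)<\infty$, forcing strict inequality. Thus the converse in (1) is essentially the definition of the rank.

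The substantive step is the remaining half of (1): if $a\indo^{\mu}_A b$ then $\mR(a/Ab)\ge \mR(a/A)$, which I would prove by induction on $\beta$ in the statement ``$a\indo^{\mu}_A b$ and $\mR(a/A)\ge\beta$ imply $\mR(a/Ab)\ge\beta$''. At a successor stage, take a forking witness $C\supseteq A$ with $a\depo^{\mu}_A C$ and $\mR(a/C)\ge\beta$. Using the extension axiom I replace $C$ by a conjugate $C'\in\o(C/Aa)$ with $C'\indo^{\mu}_{Aa}b$; invariance preserves both $a\depo^{\mu}_A C'$ and $\mR(a/C')\ge\beta$. A short transitivity/symmetry computation then upgrades $C'\indo^{\mu}_{Aa}b$ and $a\indo^{\mu}_A b$ to $a\indo^{\mu}_{C'}b$, and also shows $a\depo^{\mu}_{Ab}C'b$; the inductive hypothesis applied over $C'$ gives $\mR(a/C'b)\ge\beta$, so $C'b\supseteq Ab$ witnesses $\mR(a/Ab)\ge\beta+1$. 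This lifting of a forking extension across the extra parameter $b$ is the main obstacle, being the unique point where the extension axiom (hence smallness) and invariance must be combined carefully; the limit and zero stages are routine.

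With (1) in hand, the pair inequalities follow by the standard arguments. For (2), the lower bound $\mR(a/Ab)+\mR(b/A)\le\mR(ab/A)$ comes from concatenating a forking chain for $b$ over $A$ with one for $a$ over $Ab$, the sequential nature forcing the ordinal sum; the upper bound $\mR(ab/A)\le \mR(a/Ab)\oplus\mR(b/A)$ is proved by induction on $\mR(ab/A)$, using the ``pairs'' fact that $a\indo^{\mu}_{Ab}C$ together with $b\indo^{\mu}_A C$ implies $ab\indo^{\mu}_A C$ (a direct consequence of symmetry and transitivity), so that every forking step of $ab$ over $A$ forks either $a$ over $Ab$ or $b$ over $A$, and the freedom of interleaving forces the natural sum. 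Items (3) and (4) are reciprocity statements extracted from (2) by symmetry together with ordinal arithmetic (using $\mR(a/Ab)<\infty$ to cancel), and (5) is the equality case: the upper bound is (2), while the lower bound $\mR(ab/A)\ge\mR(a/Ab)\oplus\mR(b/A)$ follows from (3) and (1) using $a\indo^{\mu}_A b$. I expect the upper bound in (2) and the cancellation in (3)/(4) to require the most careful $\oplus$-versus-$+$ bookkeeping, but no ideas beyond the four axioms.
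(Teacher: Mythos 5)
Your overall plan coincides with the paper's: the paper gives no argument at all for this proposition, deferring to the ``standard forking calculation and transfinite induction'' in Kim and Wagner, and your monotonicity lemma, your proof of (1) (with the correct use of extension plus invariance to move the forking witness across $b$), and your sketch of (2) are essentially right. One small correction to your bookkeeping in (2): the extension axiom is needed there as well, not only in (1) --- in the lower bound the forking witness $C$ for $b$ over $A$ must be replaced by some $C'\in \o(C/Ab)$ with $C'\indo^{\mu}_{Ab}a$, since otherwise the inductive step only yields $\mR(ab/A)\ge \mR(a/Cb)+\gamma+1$, and $\mR(a/Cb)$ may be strictly smaller than $\mR(a/Ab)$.

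The genuine gaps are in (3), (4) and (5). For (5): under $a\indo^{\mu}_A b$, item (3) applied to the pair $(a,b)$ is vacuous, because by (1) its hypothesis $\mR(a/A)\ge \mR(a/Ab)\oplus\alpha$ forces $\alpha=0$; applied instead to pairs such as $(a,ab)$ (using $\mR(a/Aab)=0$ and $\mR(ab/Aa)=\mR(b/Aa)$) it yields only the \emph{ordinal}-sum bounds $\mR(ab/A)\ge \mR(b/A)+\mR(a/A)$ and $\mR(ab/A)\ge \mR(a/Ab)+\mR(b/A)$, which are strictly weaker than the natural sum: if $\mR(a/A)=\mR(b/A)=\omega+1$, both ordinal sums equal $\omega\cdot 2+1$, while the claimed bound is $\omega\cdot 2+2$. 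So the lower bound in (5) needs its own transfinite induction, structurally parallel to your proof of (1): split the successor off one side of the natural sum, take a forking witness for that coordinate, move it by extension and invariance to be $\mu$-independent from the other coordinate over the base, and conclude with (1), transitivity and the pairs lemma. Likewise, (3) and (4) are \emph{not} ``extracted from (2) by symmetry and ordinal arithmetic'': the assignment $\mR(a/Ab)=\omega^2$, $\mR(a/A)=\omega^2+2$, $\mR(b/Aa)=\omega$, $\mR(b/A)=\omega+1$, $\mR(ab/A)=\omega^2+\omega+1$ satisfies both inequalities of (2) in both orders, monotonicity, and every constraint imposed by (1) and symmetry, yet violates (3) (and (4)) with $\alpha=2$; so no arithmetic manipulation of those inequalities can prove them. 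The standard proofs of (3) and (4) (Wagner, \emph{Simple Theories}, Theorem 5.1.6) are again inductions on $\alpha$ (resp.\ on $n$) whose successor steps invoke the extension axiom, invariance and (1), exactly as in your argument for (1).
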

\begin{remark}\cite[Remark 3.13]{Kru5}\label{intrinsity_mu_rnak}
Let $a,A\subset X$ be finite. Then the computation of $\mR(a/A)$ in $X$ is the same as the computation of $\mR(a/A)$ in $X^{eq}$.
\end{remark}
Next we introduce $\mu$-stability analogous to $nm$-stability of Polish structure.
\begin{definition}\label{mu-stability}
$(X,G)$ is {\em $\mu$-stable} if every $1$-orbit has ordinal $\mu$-rank.
\end{definition}
\begin{proposition}\cite[Remark 3.15, 3.16, Proposition 3.17]{Kru5}\label{equvialence_mu_stability}
The following are equivalent:
\begin{enumerate}
	\item $(X,G)$ is $\mu$-stable.
	\item Each $n$-orbit, $n\ge 1$, has ordinal $\mu$-rank.
	\item Each $1$-orbit in $X^{eq}$ has ordinal $\mu$-rank.
	\item There are no finite sets $A_0\subset A_1\subset \ldots\subset X$ and $a\in X$ such that $a\depo^{\mu}_{A_i} A_{i+1}$ for every $i\in \omega$.
	\item There are no finite sets $A_0\subset A_1\subset \ldots\subset X^{eq}$ and $a\in X^{eq}$ such that $a\depo^{\mu}_{A_i} A_{i+1}$ for every $i\in \omega$.
	\item There are no finite sets $A_0\subset A_1\subset \ldots\subset X$ and $a\in X$ such that $G_{A_{i+1}}G_{A_i a}\not\subset_{\mu}G_{A_i}$ for every $i\in \omega$.
	\item For every finite sets $A_0\subset A_1\subset \ldots\subset X$ and $a\in X$, there is $n\in\omega$ such that $G_{A_{n+i+1}}G_{A_n a}\subset_{\mu} G_{A_{n+i}}G_{A_n a}$ for every $i\in \omega$.
\end{enumerate}
\end{proposition}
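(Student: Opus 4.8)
The plan is to read $\mR$ as the foundation rank of the forking relation and reduce every clause to the non-existence of infinite forking chains, treating the singleton/tuple and $X$/$X^{eq}$ passages separately. I would first settle the equivalences inside $X$, namely $(1)\Leftrightarrow(2)\Leftrightarrow(4)\Leftrightarrow(6)\Leftrightarrow(7)$. For $(2)\Leftrightarrow(4)$, note that by Definition~\ref{mu-rank} the function $\mR$ is exactly the foundation rank of the relation ``$(a/B)$ is a forking extension of $(a/A)$'', i.e.\ $A\subseteq B$ finite and $a\depo^{\mu}_A B$; the standard foundation-rank dichotomy gives $\mR(a/A)=\infty$ iff there is an infinite chain $A=A_0\subset A_1\subset\cdots$ with $a\depo^{\mu}_{A_i}A_{i+1}$ for all $i$, and quantifying over all finite $a,A$ turns this into $(2)\Leftrightarrow(4)$. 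For $(1)\Leftrightarrow(2)$, the direction $(2)\Rightarrow(1)$ is immediate, and $(1)\Rightarrow(2)$ follows from the subadditivity clause of Proposition~\ref{Lascar inequality}(2): iterating $\mR(a_1\cdots a_n/A)\le\bigoplus_i \mR(a_i/Aa_{i+1}\cdots a_n)$ expresses an $n$-orbit rank as a natural sum of finitely many $1$-orbit ranks, each ordinal by $(1)$.

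Next I would handle the group-theoretic clauses. The equivalence $(4)\Leftrightarrow(6)$ is just unfolding: since $A_i\subset A_{i+1}$ gives $A_iA_{i+1}=A_{i+1}$, Definition~\ref{mu-independence} and Proposition~\ref{equiv_mu_indo} identify $a\depo^{\mu}_{A_i}A_{i+1}$ with $G_{A_{i+1}}G_{A_i a}\not\subset_{\mu}G_{A_i}$. For $(6)\Leftrightarrow(7)$ I would first interpret the inner condition of $(7)$: the sets $G_{A_{n+i+1}}G_{A_n a}\subseteq G_{A_{n+i}}G_{A_n a}$ are Borel subsets of $G_{A_n}$, and $G_{A_{n+i+1}}G_{A_n a}=\pi_{A_n}^{-1}[\o(a/A_{n+i+1})]$, so through Proposition~\ref{equiv_mu_indo} and Corollary~\ref{WIF_quotient} the relation $G_{A_{n+i+1}}G_{A_n a}\subset_{\mu}G_{A_{n+i}}G_{A_n a}$ is equivalent to $a\indo^{\mu}_{A_n}A_{n+i+1}$. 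Transitivity (Theorem~\ref{transitivity}) converts ``$a\indo^{\mu}_{A_n}A_j$ for all $j\ge n$'' into ``$a\indo^{\mu}_{A_j}A_{j+1}$ for all $j\ge n$'', so $(7)$ asserts that every chain forks at only finitely many steps. This is equivalent to $(6)$: a chain forking at every step violates $(6)$ directly, and a chain forking at infinitely many steps $j_0<j_1<\cdots$ yields, by transitivity, the subchain $(A_{j_k})_k$ forking at every step.

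For the imaginary clauses I would work inside $X^{eq}$, which is again a $\mu$-structure with $(X^{eq})^{eq}=X^{eq}$. Running the $X$-arguments above for the structure $X^{eq}$ gives $(3)\Leftrightarrow(5)$ verbatim, since $(3)$ and $(5)$ are the ``$(1)$'' and ``$(4)$'' clauses for $X^{eq}$. It remains to bridge with $(2)\Leftrightarrow(3)$. The direction $(3)\Rightarrow(2)$ holds because, by the $X^{eq}$-internal equivalences, every $n$-orbit in $X^{eq}$ is ordinal, real $n$-orbits are among these, and their $X$- and $X^{eq}$-ranks coincide by intrinsity (Remark~\ref{intrinsity_mu_rnak}). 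For $(2)\Rightarrow(3)$, take a $1$-orbit $\o(e/A)$ in $X^{eq}$ with $e=[a]_E$, $a\in X^n$. I would bound $\mR(e/A)\le\mR(e/\emptyset)$ by monotonicity of $\mR$ in the base (which itself follows from Theorem~\ref{transitivity}, exactly as for foundation ranks), and then observe that $e\in\dcl^{eq}(a)$ yields $G_{Ba}\subseteq G_{Be}$ for every base $B$, so every $\emptyset$-based forking chain for $e$ is simultaneously one for $a$; hence $\mR(a/\emptyset)<\infty$ forces $\mR(e/\emptyset)<\infty$. Since $\mR(a/\emptyset)$ is a real $n$-orbit rank, it is ordinal by $(2)$ and intrinsity (this is the crucial use of Remark~\ref{intrinsity_mu_rnak}: for a real tuple over the real empty base, allowing imaginary intermediate forking bases does not change the rank), so $\mR(e/A)$ is ordinal.

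I expect the main obstacle to be $(6)\Leftrightarrow(7)$. The delicate point is to pin down precisely what the relative containment $\subset_{\mu}$ means when the ambient set $G_{A_{n+i}}G_{A_n a}$ is not a subgroup: one must pass to the quotient by $G_{A_n a}$, identify it with $\o(a/A_{n+i})$ as a $G_{A_n}$-space carrying quasi-invariant measures (via Fact~\ref{sort_mu_indo} and Corollary~\ref{WIF_quotient}), reduce the inner condition to positivity of the fixed Haar measure $\mu_{A_n}$, and only then run the transitivity bookkeeping and the passage to a forking subchain. The other genuinely $\mu$-specific ingredient is the real-to-imaginary transfer in $(2)\Rightarrow(3)$, where monotonicity in the base and the domination of $e$ by its real representative $a$ must be combined with intrinsity; once these are in place, the remainder is the standard foundation-rank and Lascar-inequality formalism.
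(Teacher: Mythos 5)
Your proof is correct and follows essentially the same route as the paper, which omits the argument entirely and defers to the standard proofs in \cite[Remarks 3.15, 3.16, Proposition 3.17]{Kru5}: the foundation-rank dichotomy for the chain clauses, the Lascar inequalities (Proposition~\ref{Lascar inequality}) for the $1$-orbit/$n$-orbit reduction, unfolding of Definition~\ref{mu-independence} together with transitivity (Theorem~\ref{transitivity}) and subchain extraction for clauses (6) and (7), and $\dcl^{eq}$-domination ($G_{Ba}\subseteq G_{Be}$ for $e=[a]_E$) combined with Remark~\ref{intrinsity_mu_rnak} for the real/imaginary bridge. Your reading of the relative containment $\subset_{\mu}$ in clause (7), via positivity of $\mu_{A_n}$ after passing through Proposition~\ref{equiv_mu_indo} and Corollary~\ref{WIF_quotient}, is the intended interpretation, so no gap remains.
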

If $(X,G)$ is $\mu$-stable, the $\mu$-rank of an orbit is computed intrinsically in the orbit itself.
\begin{proposition}\cite[Proposition 1.1, Corollary 1.2]{N1}
Suppose $(X,G)$ is $\mu$-stable. Let $a\in X^{eq}$ and let $A$ be a finite subset of $X$. For an ordinal $\alpha$, the following are equivalent:
\begin{enumerate}
	\item $\mR(a/A)\ge \alpha+1$.
	\item There is a finite set $B\subset \o(a/A)$ such that $a\depo^{\mu}_A B$ and $\mR(a/AB)\ge \alpha$.
\end{enumerate}
\end{proposition}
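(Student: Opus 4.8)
The implication $(2)\Rightarrow(1)$ is immediate from Definition \ref{mu-rank}: if $B\subseteq\o(a/A)$ is finite with $a\depo^{\mu}_A B$ and $\mR(a/AB)\ge\alpha$, then $A\cup B$ is a finite set containing $A$ with $\o(a/A(A\cup B))=\o(a/AB)$, so $a\depo^{\mu}_A(A\cup B)$ and $\mR(a/A\cup B)\ge\alpha$, whence $\mR(a/A)\ge\alpha+1$.

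For $(1)\Rightarrow(2)$, unwinding Definition \ref{mu-rank} yields a finite $B\supseteq A$ with $a\depo^{\mu}_A B$ and $\mR(a/B)\ge\alpha$; the difficulty is that $B$ need not lie in $\o(a/A)$. The first step I would take is to compress $B$ into a single imaginary. The set $\o(a/AB)=\o(a/B)$ is invariant over $B$ with closed setwise stabilizer (as orbits are closed, e.g. when $G$ is compact, as in the profinite setting of \cite{N1}), hence a $B$-definable subset of a sort of $X^{eq}$ with a name $d$ satisfying $G_d=\stab(\o(a/B))$ (Definition \ref{imaginary_definable_name} and the existence of names for definable sets following it). Since $G_{Ad}=G_A\cap\stab(\o(a/B))\supseteq G_B$ and $a\in\o(a/B)$, a direct computation gives $\o(a/Ad)=\o(a/B)$; therefore $a\depo^{\mu}_A d$, while $d\in\dcl^{eq}(B)$ together with monotonicity of the $\mu$-rank in the base yields $\mR(a/Ad)\ge\mR(a/B)\ge\alpha$. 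Thus the rank drop is already witnessed by a single forking imaginary extension $Ad$.

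It remains to internalize $d$, that is, to replace it by finitely many conjugates of $a$ inside $\o(a/A)$; equivalently, to match the ambient $\mu$-rank with the rank computed in the relativized $\mu$-structure on the orbit sort $\o(a/A)$, where all parameters come from the orbit (Fact \ref{sort_mu_indo}). Here the $\mu$-dependence half is cheap: already $\o(a/Aa)=\{a\}$ has measure zero once $\mR(a/A)\ge 1$ (since $\mu_A(G_{Aa})=0$ then), and by Transitivity (Theorem \ref{transitivity}) a single forking conjugate forces $a\depo^{\mu}_A\{a_1,\dots,a_n\}$. The plan is therefore to build conjugates $a_1,\dots,a_n\in\o(a/A)$ by a rank-descent inside the orbit sort, at each stage forking into a sub-orbit that still carries the rank of the suborbit named by $d$, using the Lascar inequalities (Proposition \ref{Lascar inequality}(1)) to certify that $\mR(a/Aa_1\cdots a_i)\ge\alpha$ persists and $\mu$-stability (Proposition \ref{equvialence_mu_stability}, well-foundedness of the ranks) to guarantee the process halts after finitely many steps.

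The main obstacle is precisely this rank lower bound. Forking by the "wrong" conjugates overshoots: taking the $a_i$ inside the small suborbit $\o(a/Ad)$, or adjoining $a$ itself, collapses the orbit and drops the rank strictly below $\alpha$, so that the dependence comes for free but the rank is destroyed. One must instead select forking conjugates in the full orbit whose point-stabilizers cut $\o(a/A)$ down to $\mu$-measure zero while leaving an orbit large enough to support rank $\alpha$, and proving that such a finite choice exists is the heart of the argument. This is where $\mu$-stability is genuinely used, through the well-foundedness of the $\mu$-rank and the geometry of the Haar measure on the homogeneous space $\o(a/A)\cong G_A/G_{Aa}$.
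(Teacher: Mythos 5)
Your direction $(2)\Rightarrow(1)$ is correct, and so is the little computation at the start of $(1)\Rightarrow(2)$: if the name $d$ of $\o(a/B)$ exists, then $\o(a/Ad)=\o(a/B)$, hence $a\depo^{\mu}_A d$ and $\mR(a/Ad)\ge\alpha$. But the proposal stops exactly where the proposition begins. Replacing $d$ (equivalently, $B$) by finitely many elements of $\o(a/A)$ \emph{is} the content of the statement, and you only sketch a ``plan'' for this, closing with the admission that ``proving that such a finite choice exists is the heart of the argument.'' So the hard direction is not proved. Two further problems: the detour through $d$ buys nothing, since the internalization problem for $d$ is verbatim the one for $B$; and in a general $\mu$-structure $d$ need not even exist in $X^{eq}$, because Definition \ref{imaginary_definable_name} requires $\stab(\o(a/B))$ to be a closed subgroup, which is automatic for compact $G$ but not otherwise. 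Note also that the paper itself gives no proof here --- it cites \cite[Proposition 1.1, Corollary 1.2]{N1}, where smallness is a standing hypothesis; any proof must invoke the extension axiom (Theorem \ref{properties_mu-independence}(4)), hence smallness, which your plan also tacitly uses.

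The missing idea is a dichotomy extracted from $\mu$-stability, and your diagnosis of the obstacle points the wrong way: the witnesses are taken precisely inside the suborbit $\o(a/B)$ that you dismiss as ``overshooting.'' Using extension, build $a_0=a,a_1,a_2,\dots\in\o(a/B)$ with $a_{i+1}\indo^{\mu}_B a_0\cdots a_i$. Independence over $B$ and Proposition \ref{Lascar inequality}(1) give $\mR(a_i/Ba_0\cdots a_{i-1})=\mR(a_i/B)=\mR(a/B)\ge\alpha$, so such conjugates do \emph{not} collapse the rank; the only delicate point is forking over $A$, and that comes from stability, not from a direct measure computation on $G_A/G_{Aa}$. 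Indeed, writing $a_i=g_ia$ with $g_i\in G_B\subseteq G_A$, invariance gives $a_i\depo^{\mu}_A B$, hence $a_i\depo^{\mu}_A Ba_0\cdots a_{i-1}$, and transitivity (Theorem \ref{transitivity}) forces, for each $i$: either $a_i\depo^{\mu}_A a_0\cdots a_{i-1}$, or, by symmetry, $B\depo^{\mu}_{Aa_0\cdots a_{i-1}}a_i$. If the second alternative held for every $i$, the chain $A\subset Aa_0\subset Aa_0a_1\subset\cdots$ would be an infinite forking chain for $B$, contradicting $\mu$-stability (Proposition \ref{equvialence_mu_stability}). So the first alternative holds for some $i$; applying $g_i^{-1}\in G_A$ turns it into $a\depo^{\mu}_A a'_0\cdots a'_{i-1}$ with $a'_j:=g_i^{-1}a_j\in\o(a/A)$, while invariance, monotonicity (a consequence of transitivity), and the rank identity above give $\mR(a/Aa'_0\cdots a'_{i-1})=\mR(a_i/Aa_0\cdots a_{i-1})\ge\mR(a_i/Ba_0\cdots a_{i-1})\ge\alpha$. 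This stability dichotomy is what your proposal lacks, and without it the ``rank-descent'' you describe has no mechanism to terminate with both the dependence and the rank bound in hand.
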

\begin{corollary}\label{mu-stable_mu_rank_instrinsic}
Suppose $(X,G)$ is $\mu$-stable. Let $D$ be a definable over $A$ and $a,B\subset D$ finite. Then the computation of $\mR(a/AB)$ in $(X,G)$ is equal to the computation of $\mR(a/AB)$ in $(D,G_A/G_{AD})$.
\end{corollary}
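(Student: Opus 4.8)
The plan is to prove by transfinite induction on the ordinal $\alpha$ the stronger biconditional that for every finite $B\subset D$ and every $a\in D$ one has $\mR(a/AB)\ge\alpha$ computed in $(X,G)$ if and only if $\mR(a/B)\ge\alpha$ computed in $(D,G_A/G_{AD})$ (I write $\mR(a/B)$ for the relativized rank since the parameters $A$ are already absorbed into the acting group $G_A$). The base case $\alpha=0$ and the limit stages are immediate from the definition of $\mR$, so only the successor step is at issue. Two preliminary observations drive everything. First, since $D$ is $A$-definable it is invariant over $A$, so $g[D]=D$ for every $g\in G_A$; as $a\in D$ this forces $\o(a/A)\subset D$, whence $\o(a/AB)\subset\o(a/A)\subset D$ for every $B$. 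Second, by Fact \ref{sort_mu_indo}(1) the computation of $\mu$-independence for tuples and subsets of $D$ in $(X,G_A)$ agrees with that in $(D,G_A/G_{AD})$; concretely, for $B,B'\subset D$ we have $a\depo^{\mu}_{AB}B'$ in $(X,G)$ if and only if $a\depo^{\mu}_{B}B'$ in $(D,G_A/G_{AD})$.

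For the successor step I would treat the two implications separately. For the direction from $(X,G)$ to $(D,G_A/G_{AD})$, suppose $\mR(a/AB)\ge\alpha+1$ in $(X,G)$. Here I invoke $\mu$-stability through the intrinsic computation of the $\mu$-rank, namely the Proposition preceding this corollary (\cite[Proposition 1.1, Corollary 1.2]{N1}): there is a finite $B'\subset\o(a/AB)$ with $a\depo^{\mu}_{AB}B'$ and $\mR(a/ABB')\ge\alpha$ in $(X,G)$. Crucially $B'\subset\o(a/AB)\subset D$, so the inductive hypothesis gives $\mR(a/BB')\ge\alpha$ in $(D,G_A/G_{AD})$, while the second observation converts $a\depo^{\mu}_{AB}B'$ into $a\depo^{\mu}_{B}B'$ there; the definition of $\mR$ in $(D,G_A/G_{AD})$ then yields $\mR(a/B)\ge\alpha+1$. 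The reverse direction is easier and uses no $\mu$-stability: if $\mR(a/B)\ge\alpha+1$ in $(D,G_A/G_{AD})$, the bare definition of $\mR$ provides a finite $B'\subset D$ with $a\depo^{\mu}_{B}B'$ there and $\mR(a/BB')\ge\alpha$; the second observation gives $a\depo^{\mu}_{AB}B'$ in $(X,G)$, the inductive hypothesis gives $\mR(a/ABB')\ge\alpha$ in $(X,G)$, and since $ABB'\supset AB$ the definition of $\mR$ in $(X,G)$ delivers $\mR(a/AB)\ge\alpha+1$.

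The only genuine difficulty is the first implication. Witnesses to high rank computed in the ambient structure $(X,G)$ must a priori be sought among all finite parameter sets, which could lie outside $D$; the point of $\mu$-stability is precisely that it licenses the Proposition confining such a witness to the orbit $\o(a/AB)$, and that orbit sits inside $D$ by the first observation. Once the witnessing parameters are trapped inside $D$, Fact \ref{sort_mu_indo}(1) transports every $\mu$-dependence statement verbatim between the two structures, so the two rank recursions step in lockstep and the induction closes.
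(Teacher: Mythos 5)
Your proof is correct and is essentially the intended argument: the paper omits the proof (deferring to the analogous results in \cite{Kru5} and \cite{N1}), but the placement of the corollary immediately after the proposition on intrinsic witnesses shows the intended route is exactly yours — transfinite induction in which $\mu$-stability traps the witnessing parameters inside $\o(a/AB)\subset D$, and Fact \ref{sort_mu_indo}(1) transfers each $\mu$-(in)dependence statement between $(X,G_A)$ and $(D,G_A/G_{AD})$. No discrepancy worth noting.
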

In the case of $\mu$-space, using Theorem \ref{description_mu_independence}, we can describe $\mu$-stability in terms of $X$, and the $\mu$-rank measures `measure theoretic complexity' of orbits, which is a counterpart of \cite[Remark 3.20]{Kru5}.
\begin{remark}\label{mu_rank_measure_complexity}
Let $(X,G)$ be a $\mu$-space. Suppose for any orbit $\o(a/A)$ over a finite set $A$,
\begin{itemize}
	\item the canonical map $\ov{\pi}_{A,a}:\ G_A/G_{Aa}\rightarrow \o(a/A)$ is Borel*.
	\item there is a locally finite quasi-invariant Borel measure on $\nu_{Aa}$ on $(\o(a/A),G_A)$.
\end{itemize}
\begin{enumerate}
	\item $\mR(a/A)\ge \alpha+1$ iff there is a finite set $B\supset A$ such that $\o(a/B)\not\subset_{\nu_{Aa}}\o(a/A)$ and $\mR(a/B)\ge \alpha$.
	\item $(X,G)$ is $\mu$-stable iff there are no finite sets $A_0\subset A_1\subset \ldots\subset X$ and $a\in X$ such that $\o(a/A_{i+1)})\not\subset_{\nu_{A_ia}}\o(a/A_i)$ for every $i\in \omega$.
\end{enumerate}
\end{remark}

Now we define a pregeometry on an orbit of the $\mu$-rank $1$, which is a counter part of \cite[Proposition 3.23]{Kru5}. For a finite set $A\subset X^{eq}$, we define $\mcl^{eq}_A(B):=\mcl^{eq}(AB)$. Then $\mcl^{eq}_A$ gives a pregeometry on a obit over $A$ of $\mu$-rank $1$.
\begin{remark}\label{characterization_of_acl}
\begin{enumerate}
	\item For any finite $a,A\subset X^{eq}$, $\mR(a/A)=0$ iff $a\in \mcl^{eq}(A)$.
	\item Suppose $\mR(a/A)=1$ and $B\subset X^{eq}$ is a finite subset. Then $a\in \mcl^{eq}_A(B)$ iff $a\depo^{\mu}_A B$.
\end{enumerate}
\end{remark}
\begin{proposition}\label{mu-rank1_pregeometry}
Suppose $\mR(a/A)=1$. Then $(\o(a/A),\mcl^{eq}_A)$ is a pregeometry.
Let $D$ be definable over $A$ with $\mR(D)=1$. Then $(D,\mcl^{eq}_A)$ is a pregeometry.
\end{proposition}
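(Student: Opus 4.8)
The plan is to verify the five defining properties of a pregeometry for the operator $\cl(B):=\mcl^{eq}_A(B)\cap \o(a/A)$ on subsets $B$ of the orbit $\o(a/A)$: reflexivity, monotonicity, finite character, idempotence, and the exchange property. By invariance (Theorem \ref{properties_mu-independence}(1)) every element of $\o(a/A)$ has $\mu$-rank $1$ over $A$, so throughout I would use the characterization of Remark \ref{characterization_of_acl}(1): for $x$ and a finite set $C$, $x\in \mcl^{eq}(C)$ iff $\mR(x/C)=0$. In particular, for $c\in \o(a/A)$ and finite $B$, $c\in\cl(B)$ iff $\mR(c/AB)=0$, and since $\mR(c/AB)\le \mR(c/A)=1$ (monotonicity of the rank in the parameters, which itself follows from transitivity, Theorem \ref{transitivity}), this rank is always $0$ or $1$.

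Reflexivity ($B\subseteq\cl(B)$) and monotonicity are immediate from $b\in AB\subseteq \mcl^{eq}(AB)$ and the monotonicity of $\mcl^{eq}$. Finite character is the assertion that $\mcl^{eq}$ is finitary; I would deduce it from $G_{AB}=\bigcap_{B_0}G_{AB_0}$ (over finite $B_0\subseteq B$) together with the measure-theoretic description of $\mcl^{eq}$, exactly as one argues finite character of $\acl$. For idempotence it suffices, after reducing to finite sets by finite character, to show that $\mcl^{eq}$ is transitive on finite sets: if $\mR(y/AC)=0$ and $\mR(c/AB)=0$ for every $c\in C$, then $\mR(y/AB)=0$. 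Here each $c\in\mcl^{eq}(AB)$ gives $c\indo^{\mu}_{AB}y$ by Theorem \ref{properties_mu-independence}(5), hence $y\indo^{\mu}_{AB}c$ by symmetry (Theorem \ref{properties_mu-independence}(2)), hence $\mR(y/ABc)=\mR(y/AB)$ by Proposition \ref{Lascar inequality}(1); iterating over the finite set $C$ and using $\mR(y/AC)=0$ forces $\mR(y/AB)=0$.

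The main obstacle is the \emph{exchange property}. Suppose $b,c\in\o(a/A)$, $B\subseteq\o(a/A)$, and $b\in\cl(Bc)\setminus\cl(B)$; I must show $c\in\cl(Bb)$. By finite character I may assume $B$ is finite. Since $b\notin\cl(B)$ we have $\mR(b/AB)=1$, while $b\in\cl(Bc)$ gives $\mR(b/ABc)=0$; thus $\mR(b/ABc)<\mR(b/AB)$, so by Proposition \ref{Lascar inequality}(1) (the rank being finite) $b\depo^{\mu}_{AB}c$. By symmetry (Theorem \ref{properties_mu-independence}(2)) we get $c\depo^{\mu}_{AB}b$. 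Now $\mR(c/AB)\le \mR(c/A)=1$; and if $\mR(c/AB)$ were $0$ then $c\in\mcl^{eq}(AB)$ would yield $c\indo^{\mu}_{AB}b$ via Theorem \ref{properties_mu-independence}(5), contradicting $c\depo^{\mu}_{AB}b$. Hence $\mR(c/AB)=1$, and $c\depo^{\mu}_{AB}b$ forces $\mR(c/ABb)<1$, i.e. $\mR(c/ABb)=0$. By Remark \ref{characterization_of_acl}(1) this says $c\in\mcl^{eq}(ABb)=\mcl^{eq}_A(Bb)$, so $c\in\cl(Bb)$, as required.

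For the second statement, let $D$ be definable over $A$ with $\mR(D)=1$ and put $\cl(B):=\mcl^{eq}_A(B)\cap D$. Now every $d\in D$ has $\mR(d/A)\le 1$, and the rank-$0$ points are exactly those in $\mcl^{eq}_A(\emptyset)\cap D=\cl(\emptyset)$, so they are loops of the geometry and cause no trouble. The verifications of reflexivity, monotonicity, finite character, and idempotence are verbatim as above, and for exchange the same computation applies: from $b\in\cl(Bc)\setminus\cl(B)$ one gets $b\notin\cl(\emptyset)$, hence $\mR(b/A)=1$ and $\mR(b/AB)=1$ with $\mR(b/ABc)=0$, and then the symmetry argument gives $\mR(c/AB)=1$ and $\mR(c/ABb)=0$, i.e. $c\in\cl(Bb)$. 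I expect the only genuine subtlety to be the bookkeeping in reducing to finite parameter sets, so that symmetry and the Lascar inequalities (stated for finite sets) may be applied.
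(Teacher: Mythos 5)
Your verifications of reflexivity, monotonicity, idempotence and exchange are correct, and they are essentially the argument the paper intends: the paper omits the proof, deferring to \cite[Proposition 3.23]{Kru5}, whose proof is exactly this computation with the rank-$0$ characterization of $\mcl^{eq}$ (Remark \ref{characterization_of_acl}), symmetry, and the Lascar inequality. The genuine gap is your treatment of \emph{finite character}. You propose to prove it for the operator $B\mapsto\mcl^{eq}(AB)$ read literally on infinite $B$, ``from $G_{AB}=\bigcap_{B_0}G_{AB_0}$ \dots exactly as one argues finite character of $\acl$.'' No such argument can exist, because the literal operator fails finite character even in small $\mu$-rank-$1$ structures, and the paper's own appendix furnishes a counterexample. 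Let $G\leq\Aut(T_{\alpha})$ be a profinite branch group acting on $X=\partial T_{\alpha}$; by level-transitivity and compactness of $G$, $\partial T_{\alpha}$ is a single $G$-orbit, of $\mu$-rank $1$. Take $B\subset\partial T_{\alpha}$ countable and dense. Since the boundary action is faithful and by homeomorphisms, any $g\in G_B$ fixes $\overline{B}=\partial T_{\alpha}$ pointwise, so $G_B=\{1\}$; hence $G_{Bx}=G_B$ has positive $\mu_B$-measure for \emph{every} $x$, i.e.\ $\mcl^{eq}(B)\supseteq\partial T_{\alpha}$. On the other hand, the appendix proves that $\mcl^{eq}(B_0)\cap\partial T_{\alpha}=B_0$ for every finite $B_0\subseteq B$. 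So $\mcl^{eq}(B)\cap\partial T_{\alpha}\neq\bigcup\{\mcl^{eq}(B_0)\cap\partial T_{\alpha}\mid B_0\subseteq B\ \mbox{finite}\}$: positivity of $\mu_{AB}(G_{ABx})$ simply does not descend to any finite subset, because passing to $G_{AB}=\bigcap_{B_0}G_{AB_0}$ collapses the ambient group together with its Haar measure. The finite character of first-order $\acl$ has no analogue here; there it comes from syntax (one formula over a finite subset witnesses algebraicity), and nothing plays that role for the measure-theoretic closure.

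The repair is definitional rather than mathematical, and it is the convention this literature tacitly uses (note that the ``trivial pregeometry'' claim in the appendix is likewise false for the literal operator applied to the dense set $B$ above): for infinite $B\subseteq\o(a/A)$ the closure must be \emph{defined} as $\bigcup\{\mcl^{eq}(AB_0)\cap\o(a/A)\mid B_0\subseteq B\ \mbox{finite}\}$, the finitary extension of the operator you control on finite sets. With that reading, finite character holds by definition, and your finite-set arguments (which are correct, including the second statement about a definable $D$ with $\mR(D)=1$, where the rank-$0$ points are loops) give all remaining axioms. One further remark: your appeals to Proposition \ref{Lascar inequality}(1) occur only in rank-$\leq 1$ situations, where they can be replaced by transitivity plus Theorem \ref{properties_mu-independence}(5) --- for instance, if $\mR(b/ABc)=0$ and $b\indo^{\mu}_{AB}c$, then transitivity yields $b\indo^{\mu}_{AB}cD$ for every finite $D$, whence $b\in\mcl^{eq}(AB)$, contradicting $\mR(b/AB)=1$. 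This substitution is worth making, since the paper's Lascar inequalities are imported from \cite{Kru5}, where smallness is a standing hypothesis, whereas the present proposition is stated without it.
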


\section{Small compact $\mu$-groups}\label{Small compact mu-groups}
In this section we get the same structure theorems for small compact $\mu$-groups  in \cite{Kru5, KW}. The same proofs work for the case of $\mu$-groups and we omit the detailed proofs. We first define several notions analogous to \cite[Definition 5.1, 5.2]{Kru5}.
\begin{definition}\label{mu-group}
Let $G$ be a compact group or a locally compact Polish group.
\begin{enumerate}
	\item A $\mu$-group structure is a $\mu$-structure $(H,G)$ such that $H$ is a group and $G$ acts as a group of automorphisms of $H$.
	\item A {\em (topological) $\mu$-group} is a $\mu$-group structure $(H,G)$ such that $H$ is a topological group and the action of $G$ on $H$ is continuous.
	\item A {\em (locally compact, or Polish) compact $\mu$-group} is a $\mu$-group $(H,G)$ where $H$ is a (locally compact, or Polish) compact group.
\end{enumerate}
\end{definition}

\begin{definition}\label{definable_group}
\begin{enumerate}
	\item We say that a group $H$ is {\em definable} in a $\mu$-structure $(X,G)$[or in $X^{eq}$] if $H$ and the group operation on $H$ are definable in $(X,G)$[or in $X^{eq}$].
	\item We say that a group $H$ is $*$-closed in a $\mu$-space $(X,G)$[or in $X^{teq}$, if $X$ is compact] if $H$ and the group operation on $H$ are $*$-closed.
\end{enumerate}
\end{definition}

Let $H$ be a definable group over $\emptyset$ in a small $\mu$-structure $(X,G)$[or in $X^{eq}$]. Then $(H,G/G_H)$ is a small $\mu$-group structure. For convenience, we may assume that $(X,G)=(H,G)$ is a small $\mu$-group structure. Let $a\in H$ and $A\subset X^{eq}$ be finite.
\begin{definition}\label{mu-generic}
We say that the orbit $\o(a/A)$ is {\em left $\mu$-generic}(or that $a$ is left $\mu$-generic over A) if for all $b\in H$ with $a\indo^{\mu}_A b$, one has that $b a\indo_A^{\mu} b$. We say that it is {\em right $\mu$-generic} if for $b$ as above, we have $a b\indo^{\mu}_A b$. An orbit is $\mu$-generic if it is both right and left $\mu$-generic.
\end{definition}
\begin{remark}\label{properties_generics}
\begin{enumerate}
	\item If $a$ is left (right) $\mu$-generic over $A$, then $a\indo^{\mu} A$.
	\item Being left (right) $\mu$-generic is preserved under taking restrictions and $\mu$-independent extensions.
	\item Left $\mu$-generic coincide with right $\mu$-generic and so with $\mu$-generic.
	\item In the $\mu$-stable case, being $\mu$-generic means being of maximal $\mu$-rank.
\end{enumerate}
\end{remark}

Next we show that $(H,G)$ has an $\mu$-generic.
\begin{proposition}\label{existence_mu_generic}
Let $(H,G)$ be a small locally compact $\mu$-group, or, more generally, $H$ is a group definable over $C$ in a small $\mu$-space $(X,G)$[or in $X^{eq}$] such that $(H,G_C/G_{CH})$ is a locally compact $\mu$-group. Suppose each orbit over a finite set is a Borel set of $H$, and the canonical map $\ov{\pi}_{A,a}:\ G_A/G_{Aa}\rightarrow \o(a/A)$ is Borel* for any orbit $\o(a/A)$ over a finite set $A$. Then, there is at least one $\mu$-generic orbit in $H$, and an orbit $\o$ is $\mu$-generic in $H$ if and only if $\o\subset_{\mu} H$.
\end{proposition}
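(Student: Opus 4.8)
The plan is to reduce the whole statement to the Haar measure $\mu_H$ of the group $H$ itself and to convert $\mu$-independence into positivity of $\mu_H$-measures of orbits via Theorem \ref{description_mu_independence}. First I would record the reductions: applying Fact \ref{quasi_inv_and_equivalence} to $H$ and its trivial subgroup shows all quasi-invariant measures on $H$ are equivalent, so $\o \subset_{\mu} H$ is equivalent to $\mu_H(\o)>0$; and by the conventions fixed before Definition \ref{mu-generic} together with Fact \ref{sort_mu_indo} I may assume $(X,G)=(H,G)$ is a small locally compact $\mu$-group. By Remark \ref{properties_generics}(3) it is enough to work with left $\mu$-genericity. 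The technical heart is the following formula $(*)$: \emph{if $\mu_H(\o(a/A))>0$, then for every finite $B$ one has $a\indo^{\mu}_A B\Leftrightarrow \mu_H(\o(a/AB))>0$.} To prove $(*)$, observe that $G_A$ acts on $H$ by automorphisms, which rescale $\mu_H$ by a positive constant, so the restriction $\mu_H\restriction \o(a/A)$ is a nonzero, locally finite, $G_A$-quasi-invariant Borel measure on the orbit; since $\ov{\pi}_{A,a}$ is Borel* by hypothesis, Theorem \ref{description_mu_independence} applies with $\nu=\mu_H\restriction\o(a/A)$ (equivalent to any admissible $\nu$ by Fact \ref{quasi_inv_and_equivalence}), which gives $(*)$.

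Two translation identities then drive the argument, both using that each $g\in G_{Ab}$ fixes $b$ and each $g\in G_{Aa}$ fixes $a$, while acting as an automorphism of $H$: namely $\o(ba/Ab)=b\cdot\o(a/Ab)$ and $\o(ba/Aa)=\o(b/Aa)\cdot a$. Since left and right translations in $H$ preserve $\mu_H$-null sets, these identities transport positivity of measure. For the direction $\o(a/A)\subset_{\mu}H\Rightarrow$ generic, assume $\mu_H(\o(a/A))>0$ and take any $b$ with $a\indo^{\mu}_A b$; by $(*)$ we get $\mu_H(\o(a/Ab))>0$, hence $\mu_H(\o(ba/Ab))=\mu_H(b\cdot\o(a/Ab))>0$, and therefore $\mu_H(\o(ba/A))>0$. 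Applying $(*)$ with base orbit $\o(ba/A)$ and $B=\{b\}$ then yields $ba\indo^{\mu}_A b$, so $\o(a/A)$ is left $\mu$-generic.

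For the converse, assume $\o(a/A)$ is $\mu$-generic. By smallness $H$ is a countable union of $G_A$-orbits with $\mu_H(H)>0$, so some orbit $\o(b/A)$ satisfies $\mu_H(\o(b/A))>0$; using extension (Theorem \ref{properties_mu-independence}(4)) I may replace $a$ by a point of its orbit so that $a\indo^{\mu}_A b$, which changes neither the genericity nor $\mu_H(\o(a/A))$. By symmetry (Theorem \ref{properties_mu-independence}(2)) $b\indo^{\mu}_A a$, so $(*)$ with base $\o(b/A)$ gives $\mu_H(\o(b/Aa))>0$, and right-translating by $a$ gives $\mu_H(\o(ba/Aa))=\mu_H(\o(b/Aa)\cdot a)>0$, whence $\mu_H(\o(ba/A))>0$. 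Now left $\mu$-genericity of $\o(a/A)$ gives $ba\indo^{\mu}_A b$, and applying $(*)$ with base $\o(ba/A)$ produces $\mu_H(\o(ba/Ab))>0$; since $\o(ba/Ab)=b\cdot\o(a/Ab)$, left-invariance yields $\mu_H(\o(a/Ab))>0$, hence $\mu_H(\o(a/A))>0$, i.e.\ $\o(a/A)\subset_{\mu}H$.

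This establishes the equivalence, and existence is then immediate: the positive-measure orbit $\o(b/A)$ produced by smallness is $\mu$-generic by the first direction. The main obstacle I anticipate is the careful bookkeeping in the converse direction — in particular making sure that every invocation of $(*)$ is applied to an orbit already known to have positive measure, and verifying the two translation identities and the local finiteness and quasi-invariance of $\mu_H\restriction\o(a/A)$ needed to legitimately feed $\mu_H$ into Theorem \ref{description_mu_independence}.
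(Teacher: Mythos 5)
Your proof is correct and follows essentially the same route as the paper's: your formula $(*)$ is exactly the paper's key claim (proved there by the push-out measure $\ov{\delta}^*(\mu_H)$ on $G_A/G_{Ah}$ rather than by citing Theorem \ref{description_mu_independence} directly), and your forward/existence arguments coincide with the paper's up to working with left instead of right $\mu$-genericity. The only difference is that for the converse (generic $\Rightarrow$ positive measure) the paper simply defers to the proof of \cite[Proposition 5.5]{Kru5}, whereas you spell out that adaptation --- smallness, extension, symmetry, and the two translation identities, each invocation of $(*)$ applied to an orbit already known to have positive $\mu_H$-measure --- and your version of it is sound.
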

\begin{proof}
Consider the case when $(H,G)$ is a small locally compact $\mu$-group. Let $\mu_H$ be the Haar measures on $G$ and $H$ respectively. Fix a finite subset $A\subset H$. Each orbit of the $G_A$-action is a Borel subset of $H$, and there is an orbit $\o=\o(h/A)$ with $\mu_H(\o)>0$ because of smallness. We show that $\o$ is $\mu$-generic. Consider any $h'\indo^{\mu}_A h$.
\begin{claim}
$\mu_H(\o(h/Ah'))>0$.
\end{claim}
\begin{proof}
Consider $\delta:=\pi_{A,h}:\ G_A\rightarrow \o, g\mapsto gh$, which is Borel*. The map $\pi$ factors through as follows:
$$\begin{tikzcd}
G_A \ar{r}{\pi} \ar[rd,"\delta"'] & G_A/G_{Ah}\ar{d}{\ov{\delta}}\\
&\o
\end{tikzcd}$$

\noindent Then the push-out measure $\ov{\delta}^*(\mu_H)$ is a quasi-invariant regular Borel measure on $G_A/G_{Ah}$. From $h\indo_A^{\mu}h$, we have that
\begin{align*}
G_{Ah'} G_{Ah}\subset_{\mu}G_A &\Rightarrow G_{Ah'} G_{Ah}/G_{Ah}\subset_{\mu}G_A/G_{Ah}\\
&\Leftrightarrow \ov{\delta}^*(\mu_H)(G_{Ah'} G_{Ah})>0\\
&\Leftrightarrow \mu_H(\o(h/Ah'))>0.
\end{align*}
\end{proof}
\noindent Since $\mu_H(\o(h/A,h'))>0$, we have that 
$$\o(h/A,h')h'\subset_{\mu}H\Rightarrow \o(hh'/A,h')\subset_{\mu} H\Rightarrow \o(hh'/A,h')\subset_{\mu_H}\o(hh').$$ By Theorem \ref{description_mu_independence}, we have that $hh'\indo^{\mu}_A h'$. We have proved that any orbit of positive $\mu_H$-measure value is $\mu$-generic and so there is an $\mu$-generic orbit exists. It remains to show the converse. It is exactly same with the proof of \cite[Proposition 5.5]{Kru5}
\end{proof}

\begin{remark}\label{haarmeasure_G-quasiinv}
Let $(H,G)$ be a locally compact $\mu$-group and let $\nu$ be the Haar measure on $H$, which is $H$-invariant. The Haar measure $\nu$ is  $G$-quasi-invariant, that is, for any $g\in G$, $\nu\circ \ov g=c(g)\nu$ for some constant $c(g)>0$, where $\ov g:\ H\rightarrow H,\ h\mapsto gh$ is a homeomoerphism of $H$. So, $(H,G)$ satisfies the assumption of Proposition \ref{existence_mu_generic} if we take $G$ compact, or both $G$ and $H$ Polish.
\end{remark}
\begin{proof}
Let $\nu$ be the Haar measure on $H$. Let $g\in G$. Define $\nu'=\nu\circ \ov g$. Since $\ov g$ is a homeomorphism and $\nu$ is $H$-invariant, $\nu'$ is also the Haar measure on $H$, that is, a $H$-invariant regular Radon measure. Therefore by Remark/Definition \ref{Haar_measure}, there is $c(g)>0$ such that $\nu'=c(g)\nu$.

If $G$ is compact, then each orbit over a finite set is a closed subset of $H$. If both $G$ and $H$ are Polish, then by the Lusin-Suslin theorem, each orbit over a finite set is a Borel subset of $H$.
\end{proof}
Using Proposition \ref{existence_mu_generic}, we get a corollary analogous to \cite[Corollary 5.6]{Kru5}.
\begin{corollary}\label{rank_between_subgroups}
Let $(H,G)$ be a small locally compact $\mu$-group, or, more generally, $H$ is a group definable over $C$ in a small $\mu$-space $(X,G)$[or in $X^{eq}$] such that $(H,G_C/G_{CH})$ is a locally compact $\mu$-group. Suppose each orbit over a finite set is a Borel set of $H$, and the canonical map $\ov{\pi}_{A,a}:\ G_A/G_{Aa}\rightarrow \o(a/A)$ is Borel* for any orbit $\o(a/A)$ over a finite set $A$. Let $H_1<H_2$ be closed subgroups of $H$ definable in $X^{eq}$.
\begin{enumerate}
	\item Let $\mu_2$ be the Haar measure on $H_2$. If $\mu_2(H_1)=0$ and $\mR(H_2)<\infty$, then $\mR(H_1)<\mR(H_2)$.
	\item If $H_1$ is a open subgroup of $H_2$, then $\mR(H_1)=\mR(H_2)$.
\end{enumerate}
\end{corollary}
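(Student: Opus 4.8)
The goal is to prove Corollary \ref{rank_between_subgroups}, which compares the $\mu$-rank of nested closed definable subgroups $H_1 < H_2$ of $H$. Both parts should follow from the machinery already developed, chiefly Proposition \ref{existence_mu_generic} (existence of $\mu$-generic orbits and the characterization that an orbit $\o$ is $\mu$-generic iff $\o\subset_{\mu}H$), the description of $\mu$-independence in Theorem \ref{description_mu_independence}, and the fact (Remark \ref{properties_generics}(4)) that in the $\mu$-stable case being $\mu$-generic means having maximal $\mu$-rank. Throughout I would work inside the $\mu$-group $(H_2, G'/G'_{H_2})$ obtained by relativizing (using Fact \ref{sort_mu_indo} and Corollary \ref{mu-stable_mu_rank_instrinsic}), so that $H_2$ itself plays the role of the ambient compact/locally compact $\mu$-group, and $\mR$-computations inside $H_2$ agree with those in $X^{eq}$.

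For part (2), the plan is to show that an open subgroup $H_1$ of $H_2$ has positive $\mu_2$-measure and then transfer $\mu$-genericity. Since $H_1$ is open in $H_2$ and $H_2$ is $\sigma$-compact, $H_1$ has countable index in $H_2$, so by $\sigma$-additivity of the Haar measure $\mu_2$ we get $\mu_2(H_1)>0$, i.e.\ $H_1\subset_{\mu}H_2$. A $\mu$-generic orbit $\o$ of $H_1$ then has positive $\mu_1$-measure, and by the local-finiteness and quasi-invariance arguments (essentially Corollary \ref{cor 1} / Lemma \ref{key lemma}) a subset of positive measure inside an open positive-measure subgroup retains positive measure in $H_2$; hence $\o\subset_{\mu}H_2$, so $\o$ is $\mu$-generic in $H_2$ as well. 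By Remark \ref{properties_generics}(4) this forces $\mR(H_1)=\mR(H_2)$, after checking the ranks are finite or handling the $\infty$ case symmetrically. The only subtlety is making the measure-transfer precise across the two Haar measures $\mu_1,\mu_2$, which is exactly what Corollary \ref{cor 1} supplies.

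For part (1), where $\mu_2(H_1)=0$ and $\mR(H_2)<\infty$, the strategy is to exhibit a $\mu$-generic orbit $\o$ of $H_2$ witnessing strictly higher rank than anything in $H_1$. Since $\mu_2(H_1)=0$, the subgroup $H_1$ is \emph{not} $\mu$-generic in $H_2$ (a $\mu$-generic orbit must have positive measure by Proposition \ref{existence_mu_generic}), so $\mR(H_1)<\mR(H_2)$ would follow immediately from Remark \ref{properties_generics}(4) provided $\mR(H_1)$ is computed relative to a parameter set over which the generic orbits of $H_2$ live. Concretely, I would pick a finite $A$ (containing names for $H_1,H_2$) and a $\mu$-generic $h\in H_2$ over $A$ with $\mR(h/A)=\mR(H_2)$; any $a\in H_1$ has $\o(a/A)\subset H_1$, hence $\mu_2(\o(a/A))=0$, so $\o(a/A)$ is non-$\mu$-generic and $\mR(a/A)<\mR(H_2)$ by maximality. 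Taking the supremum over $a\in H_1$ gives $\mR(H_1)<\mR(H_2)$, using finiteness of $\mR(H_2)$ to ensure the strict inequality survives the supremum.

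The main obstacle I anticipate is part (1)'s final step: passing from "every orbit in $H_1$ has rank strictly below $\mR(H_2)$" to "the supremum $\mR(H_1)$ is strictly below $\mR(H_2)$." A pointwise strict inequality need not be preserved under $\sup$ in general, so I must use that $\mR(H_2)$ is a finite ordinal $n$: then every $a\in H_1$ satisfies $\mR(a/A)\le n-1$, giving $\mR(H_1)\le n-1 < n=\mR(H_2)$ directly. This is why the hypothesis $\mR(H_2)<\infty$ is essential and cannot be dropped, and it is the only place where real care is needed; the measure-theoretic inputs are otherwise routine applications of the results already established.
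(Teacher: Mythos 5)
There is a genuine gap in part (1), and you identified its exact location yourself but then closed it by silently strengthening the hypothesis. In this paper $\mR(H_2)<\infty$ does \emph{not} mean that $\mR(H_2)$ is a natural number: by Definition \ref{mu-rank}, $\infty$ is reserved for ranks that exceed every ordinal, so $\mR(H_2)<\infty$ only says the rank is an ordinal, possibly $\omega$ or larger (when the paper means finite it writes $\mR(H)<\omega$, as in Theorem \ref{Thoerem:str_small stable compact groups}(2)). Hence your final step ``every $a\in H_1$ satisfies $\mR(a/A)\le n-1$, so the supremum is $\le n-1$'' is unavailable when $\mR(H_2)$ is a limit ordinal, and, as you yourself observe, a pointwise strict inequality is not preserved under suprema. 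The repair is to argue by contradiction instead of pointwise: since $\mR(H_2)<\infty$, the relativized structures on $H_1$ and $H_2$ are $\mu$-stable, so by Proposition \ref{existence_mu_generic} and Remark \ref{properties_generics}(4) the supremum $\mR(H_1)$ is actually attained on a $\mu$-generic orbit $\o$ of $H_1$ over a set $A$ of names; if $\mR(H_1)=\mR(H_2)$, then $\o$ has maximal $\mu$-rank among orbits in $H_2$, hence is $\mu$-generic in $H_2$, hence $\mu_2(\o)>0$ by Proposition \ref{existence_mu_generic}, contradicting $\o\subseteq H_1$ and $\mu_2(H_1)=0$. No passage to a supremum of strict inequalities is ever needed.

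Part (2) has a companion gap: that statement carries no rank hypothesis, so Remark \ref{properties_generics}(4) (generic $=$ maximal rank, valid only in the $\mu$-stable case) cannot be invoked, and ``handling the $\infty$ case symmetrically'' is not an argument --- when $\mR(H_2)=\infty$ you must still produce orbits of unbounded rank inside $H_1$, and a shared $\mu$-generic orbit yields no rank information without stability. Note also that your route differs from the paper's: the paper first converts the measure hypotheses into index statements ($\mu_2(H_1)=0$ forces $[H_2:H_1]>\aleph_0$ by $\sigma$-additivity and translation invariance; $H_1$ open forces $[H_2:H_1]\le\aleph_0$ by $\sigma$-compactness) and then quotes the proof of \cite[Corollary 5.6]{Kru5}. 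For (2) the index-based argument works for all values of the rank, including $\infty$: since $H_2/H_1$ is countable and $G_A$-invariant, each coset $[hH_1]$ is an imaginary lying in $\Acl(A)\subseteq\mcl(A)$, so $h\indo^{\mu}_A [hH_1]$ and $\mR(h/A)=\mR(h/A[hH_1])\le \mR(hH_1)=\mR(H_1)$, where the last equality holds because left translation by a representative $c$ is a $G_{Ac}$-equivariant bijection of $H_1$ onto $hH_1$ and the rank of a definable set does not depend on the chosen defining parameters. Taking the supremum over $h\in H_2$ gives $\mR(H_2)\le\mR(H_1)$ unconditionally, which is exactly what your genericity argument cannot reach.
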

\begin{proof}
In the case of $(1)$, we have that $[H_2:H_1]>\aleph_0$ by $\sigma$-additivity of the measure. In the case of $(2)$, if $G$ is compact, then $[H_2:H_1]< \aleph_0$ and if $G$ is a Polish group, then $[H_2:H_1]\le \aleph_0$. The remaining proof is exactly same with the proof of \cite[Corollary 5.6]{Kru5}.
\end{proof}
For small [compact] $\mu$-groups, we have the same result with \cite[Proposition 5.7, Corollary 5.9]{Kru5}.
\begin{proposition}\label{loc_finiteness}
\begin{enumerate}
	\item Let $(H,G)$ be a small $\mu$-group. Then any finitely generated subgroup of  $H$ is countable and does not have limits points in $H$.
	\item Let $(H,G)$ be a small compact $\mu$-group. Then $H$ is a profinite torsion group.
\end{enumerate}
\end{proposition}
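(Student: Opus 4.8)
The plan is to prove the two structural statements about small $\mu$-groups $(H,G)$ separately, leaning on Proposition \ref{loc_finiteness}(1) to bootstrap into (2). For part (1), I would fix a finitely generated subgroup $K=\grp{h_1,\ldots,h_k}$ of $H$ and set $A=\{h_1,\ldots,h_k\}$. The key observation is that $K\subset\dcl(A)$: every element of $K$ is a word in the $h_i$, and since $G_A$ fixes each $h_i$ and acts by group automorphisms of $H$, $G_A$ fixes every such word, so $\o(w/A)$ is a singleton for each $w\in K$. Thus $K\subset\dcl(A)\subset\acl(A)$, and each element of $K$ has a one-point orbit under $G_A$. Smallness then forces $\dcl(A)$ to be countable (it is a union of singleton orbits, and there are only countably many orbits of the $G_A$-action), so $K$ is countable. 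For the \emph{no limit points} assertion I would argue that if $h\in H$ were a limit point of $K$, then, using continuity of the action and the fact that $K\subset\dcl(A)$ consists of $A$-fixed points, one shows $\o(h/A)$ must itself collapse or accumulate against countably many fixed points in a way incompatible with smallness; more concretely, one exploits that the closure $\overline{K}$ is still contained in $\Acl(A)$ (a countable-orbit closure operator) and that a compact or locally compact group cannot have a countably infinite subset with a limit point unless that limit point also lies in the countable $\dcl$/$\acl$-closure, contradicting that $K$ was finitely generated hence had discretely many generators.

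For part (2), assume now $(H,G)$ is a small \emph{compact} $\mu$-group, so $H$ is a compact group. By part (1), every finitely generated subgroup of $H$ is countable and discrete (has no limit points). In a compact group, a discrete subgroup is necessarily finite, since a discrete subset of a compact space is finite. Hence every finitely generated subgroup of $H$ is finite. This is exactly the statement that $H$ is \emph{locally finite}, and a locally finite group is in particular a torsion group: any single element $h\in H$ generates a finite cyclic group, so $h$ has finite order. To upgrade "compact locally finite group" to "profinite torsion group," I would invoke the standard structure theory: a compact group in which every finitely generated subgroup is finite is totally disconnected (its identity component, being a connected compact group, would contain elements of infinite order or a torus, contradicting local finiteness), and a compact totally disconnected group is profinite.

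The step I expect to be the main obstacle is the "no limit points" clause in part (1), because it is the one place where the mere combinatorics of countably-many-orbits is not obviously enough and one must combine smallness with the topology of $H$ and the continuity/closedness properties of the stabilizers. The cleanest route is probably to show directly that $\dcl(A)$, equivalently the fixed-point set $\{h\in H: G_A h=h\}=\mathrm{Fix}(G_A)$, is a \emph{closed} subgroup of $H$ (it is an intersection of fixed-point sets of the continuous automorphisms $g\in G_A$, each of which is closed), so $\overline{K}\subset\mathrm{Fix}(G_A)=\dcl(A)$; smallness makes $\dcl(A)$ countable, and a countable closed subgroup of a locally compact group is discrete by the Baire category theorem, which gives simultaneously that $K$ is countable \emph{and} that it has no limit points. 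I would present this Baire-category argument as the technical heart of (1), after which (2) follows from the elementary facts about discrete subgroups of compact groups and the connectedness argument sketched above.
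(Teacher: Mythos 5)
Your proposal is correct and follows essentially the same route as the paper, which in fact gives no proof of its own but defers to Krupinski's Proposition 5.7 and Corollary 5.9 in \cite{Kru5}: there, as in your final paragraph, one shows $\grp{A}\subset\dcl(A)$, that $\dcl(A)$ is a countable (by smallness) and closed (by continuity of the action on the Hausdorff group $H$) subgroup, hence discrete by Baire category, hence without limit points, and then in the compact case deduces local finiteness, torsion, and triviality of the identity component, i.e.\ profiniteness. Two small repairs: the Baire step requires $H$ itself to be a Baire group (locally compact or Polish) --- an assumption your proof rightly includes but which the paper's bare notion of a topological $\mu$-group does not guarantee, and which is genuinely needed (e.g.\ a finitely generated residually finite group with its profinite topology, acted on by the trivial group, is a small topological $\mu$-group violating (1)) --- and in (2) the phrase ``a discrete subset of a compact space is finite'' should be replaced by ``a subset of a compact space with no limit points is finite'' (or: a discrete subgroup of a Hausdorff group is closed, and a closed discrete subset of a compact space is finite), since discrete but non-closed subsets of compact spaces can be infinite.
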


\noindent From now on, we assume that $(H,G)$ is a small compact $\mu$-group. In stead of the $\NM$-rank, we use the $\mu$-rank and we get the same structure results(\cite[Theorem 5.19, 5.24]{Kru5}) of small compact $\mu$-groups. 
\begin{remark}\label{Remark:clopensubgroup}
Let $K$ be a closed subgroup of a compact group $H$. Then, $K$ is a open subgroup of $H$ if and only if $K$ has the non-empty interior if and only if  $[H:K]\le \omega$ if and only if $K\subset_{\mu}H$.
\end{remark}
\noindent We have the following result for $\mu$-groups analogous to \cite[Theorem 5.19]{Kru5}.
\begin{theorem}\label{str_small_cpt_mu_group}
Let $(H,G)$ be a small, $\mu$-stable, compact $\mu$-group, then $H$ is solvable-by-finite.
\end{theorem}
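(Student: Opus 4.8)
The plan is to follow the proof of \cite[Theorem 5.19]{Kru5} essentially verbatim, substituting the $\mu$-rank $\mR$ for the $\NM$-rank and $\mu$-independence for $nm$-independence, and feeding in the $\mu$-analogues proved above. The argument is a transfinite induction on $\mR(H)$. In the base case $\mR(H)=0$: by Remark \ref{characterization_of_acl}(1) every $1$-orbit lies in $\mcl^{eq}(\emptyset)$, which for compact $G$ means every orbit is finite (Remark \ref{relation_between_closures}); smallness then forces $H$ to be countable, and a countable compact group is finite, so the statement is trivial. For the inductive step I would first reduce to the connected case. Exactly as in the superstable setting, $\mu$-stability supplies a chain condition (Proposition \ref{equvialence_mu_stability}) which, combined with Remark \ref{Remark:clopensubgroup} identifying open, finite-index and positive-measure subgroups, yields a smallest definable finite-index subgroup $H^{0}\trianglelefteq H$, the connected component. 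Since a finite-index overgroup of a solvable group is solvable-by-finite, it suffices to show $H^{0}$ is solvable; so I may assume $H$ is connected, i.e.\ has no proper definable finite-index subgroup.

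The engine of the induction is a dichotomy for connected $H$. If $H$ is abelian we are done. Otherwise I search for a definable abelian normal subgroup $N\trianglelefteq H$ of positive $\mu$-rank. The first candidate is the center $Z(H)=\bigcap_{h\in H}C_{H}(h)$, which is closed, characteristic and definable. If $Z(H)$ is infinite, then by the Lascar inequalities (Proposition \ref{Lascar inequality}) together with the subgroup rank comparison (Corollary \ref{rank_between_subgroups}) the quotient $H/Z(H)$ has $\mR(H/Z(H))<\mR(H)$; moreover $H/Z(H)$ is again a small $\mu$-stable compact $\mu$-group, and its $\mu$-rank is computed intrinsically, by Facts \ref{sort_mu_indo}, \ref{topological_sort} and Corollary \ref{mu-stable_mu_rank_instrinsic}. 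The induction hypothesis makes $H/Z(H)$ solvable-by-finite, and an extension of a solvable-by-finite group by a central (hence abelian) subgroup is solvable-by-finite; this routine extension bookkeeping closes the case. The same quotient-and-induction step applies verbatim to any definable abelian $N\trianglelefteq H$ of positive rank once it is located.

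The hard part is the remaining case: $H$ connected and nonabelian with $Z(H)$ finite, where after factoring out the finite center one may take $H$ centerless. Here I must still produce a \emph{proper} definable normal subgroup of positive, strictly smaller $\mu$-rank, and this is where the group machinery, rather than formal rank arithmetic, is needed. Using the existence of $\mu$-generics (Proposition \ref{existence_mu_generic}) and the stabilizer/indecomposability apparatus of \cite{Kru5} — the $\mu$-rank analogue of Zilber's indecomposability theorem, guaranteeing in particular that commutator subgroups such as $[H,H]$ are definable — I would pass to a connected minimal positive-rank definable normal subgroup $N$ and show it is abelian: $[N,N]$ is definable, characteristic in $N$ and hence normal in $H$, but of strictly smaller rank than the connected $N$, so minimality forces $[N,N]=1$. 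Then $N$ and $H/N$ feed the induction as above. I expect this indecomposability-and-minimal-normal-subgroup step to be the main obstacle: it is the one point where the fine structure of connected $\mu$-stable groups is used (and where the fact that $H$ is a profinite torsion group, Proposition \ref{loc_finiteness}, may be needed to exclude bad-group behaviour), and it also requires checking that every auxiliary object produced — centralizers, commutator subgroups, and quotients — is again a small $\mu$-stable compact $\mu$-group with the expected intrinsically computed $\mu$-rank. All the surrounding forking calculus is formal and transfers from \cite[Section 5]{Kru5} without change.
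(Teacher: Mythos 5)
Your proposal does not follow the paper's proof, and as written it has two genuine gaps. First, the opening reduction to a ``connected'' $H$ is unjustified and in fact false in this setting: by Proposition \ref{loc_finiteness}(2), $H$ is an infinite profinite group, and passing to an open (equivalently, by Remark \ref{Remark:clopensubgroup}, finite-index or positive-measure) definable subgroup does not decrease the $\mu$-rank at all (Corollary \ref{rank_between_subgroups}(2)), so $\mu$-stability imposes no chain condition on such subgroups and no smallest definable finite-index subgroup $H^{0}$ need exist --- already $(\mathbb{Z},+)$ in the classical superstable setting, or any infinite profinite group with its descending chain of open subgroups intersecting in $\{1\}$, shows the intuition you are importing breaks down. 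Second, and more seriously, the engine of your induction in the centerless case --- a $\mu$-rank analogue of Zilber's indecomposability theorem, definability of commutator subgroups with a rank drop $\mR(\overline{[N,N]})<\mR(N)$, and minimal positive-rank definable normal subgroups --- does not exist in \cite{Kru5} or in this paper, and it cannot be conjured by formal forking calculus: rank arguments of exactly this shape would apply to groups of finite Morley rank, where solvable-by-finite is simply false (simple algebraic groups). You flag this step yourself as ``the main obstacle,'' but it is not an obstacle to be smoothed over; it is the point where a purely model-theoretic induction cannot close, and where the group-theoretic nature of $H$ must carry the entire weight rather than merely ``exclude bad-group behaviour.''

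The paper's proof is different and much more specific. Since $(H,G)$ is small, $H$ is a profinite torsion group (Proposition \ref{loc_finiteness}); Krupinski's argument in \cite[Theorem 5.19]{Kru5}, which the paper imports, runs through the structure theory of such groups (Wilson's decomposition into pro-$p$ and semisimple characteristic sections, with Zelmanov's solution of the restricted Burnside problem forcing the Frattini subgroup of an infinite torsion pro-$p$ group to be non-open, since otherwise the group would be topologically finitely generated and hence finite). The single point requiring a $\mu$-specific argument --- and this is the entire content of the paper's proof --- is that the Frattini subgroup $\Phi(H)$, being closed and not open, satisfies $\Phi(H)\not\subset_{\mu}H$ by Remark \ref{Remark:clopensubgroup}, so Corollary \ref{rank_between_subgroups}(1) gives $\mR(\Phi(H))<\mR(H)$ and the rank induction of \cite{Kru5} goes through with $\mR$ in place of the $\NM$-rank. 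Your proposal never mentions the Frattini subgroup, Wilson's theorem, or Zelmanov's theorem, so despite its stated intention it is not a transcription of Krupinski's proof but a sketch of a different argument whose crucial step is unsupported.
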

\begin{proof}
With our assumption, it is reduced to the problem whether to apply Corollary \ref{rank_between_subgroups}(1) to the {\em Frattini subgroup} $\Phi(H)$ of $G$, which is the intersection of all maximal open subgroups of $H$. In our case, $\Phi(H)$ is a closed subgroup which is not open, and so $\Phi(H)\not\subset_{\mu} H$ by Remark\ref{Remark:clopensubgroup}. The remaining proof is exactly same with the proof of \cite[Theorem 5.19]{Kru5}.
\end{proof}
%\begin{corollary}\cite[Corollary 5.23]{Kru5}\label{small_cpt_mu_group_*_closed}
%If $(H,G)$ is a small, $\mu$-stable, compact $\mu$-group, then $H$ has a $*$-closed, solvable subgroup of finite index.
%\end{corollary}
\noindent Next one is the analogy of \cite[Theorem 5.24]{Kru5} for $\mu$-groups.
\begin{theorem}\label{str_small_cpt_finiterank_mu_group}
If $(H,G)$ is a small compact $\mu$-group of finite $\mu$-rank, and $H$ is solvable-by-finte, then $H$ is nilpotent-by-finite.
\end{theorem}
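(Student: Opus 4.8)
The plan is to reproduce the proof of \cite[Theorem 5.24]{Kru5} (see also \cite{KW}), replacing the $\NM$-rank by the $\mu$-rank $\mR$ and $nm$-independence by $\indo^{\mu}$; every ingredient used there has a $\mu$-analogue already established in this paper, namely the existence of $\mu$-generic orbits (Proposition \ref{existence_mu_generic}, whose hypotheses hold here by Remark \ref{haarmeasure_G-quasiinv}), the strict rank-drop for non-open closed definable subgroups together with rank-invariance under open ones (Corollary \ref{rank_between_subgroups}), the Lascar inequalities for $\mR$ (Proposition \ref{Lascar inequality}), the clopen/measure dichotomy (Remark \ref{Remark:clopensubgroup}), and the fact that a small compact $\mu$-group is a profinite torsion group (Proposition \ref{loc_finiteness}). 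First I would reduce to the case where $H$ is solvable: the solvable radical $R$ of $H$ is characteristic, hence closed and $G$-invariant, and it has finite index since $H$ is solvable-by-finite; as nilpotent-by-finite passes from a finite-index subgroup (via taking cores) to the whole group, it suffices to treat $R$, which with its induced action is again a small compact $\mu$-group of finite $\mu$-rank by Facts \ref{topological_sort} and \ref{sort_mu_indo}. So assume $H$ is solvable of derived length $d$.

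Next I would induct on $d$. If $d\le 1$ then $H$ is abelian, hence nilpotent. For $d\ge 2$, put $A:=\overline{H^{(d-1)}}$, the last nontrivial term of the derived series; it is a closed abelian characteristic (so $G$-invariant and $\emptyset$-definable) normal subgroup, and $H/A$ is solvable of derived length $d-1$. Since $A$ is closed $G$-invariant normal, $H/A$ with its induced action is a small compact $\mu$-group of finite $\mu$-rank (Facts \ref{topological_sort}, \ref{sort_mu_indo}), so by the inductive hypothesis it is nilpotent-by-finite; pulling back a finite-index nilpotent subgroup, I may replace $H$ by an open $G$-invariant finite-index subgroup and assume $H/A$ is nilpotent. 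The problem then reduces to showing that the iterated commutator chain $D_0:=A\supseteq D_1\supseteq D_2\supseteq\cdots$, with $D_{i+1}:=\overline{[D_i,H]}$, reaches $\{1\}$ after finitely many steps: each $D_i$ is a closed $G$-invariant (hence definable) normal subgroup, each factor $D_i/D_{i+1}$ lies in the centre of $H/D_{i+1}$, so a finite termination $D_k=\{1\}$ places $A$ inside a finite $H$-central series, which together with the nilpotence of $H/A$ yields nilpotence of $H$.

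The main obstacle is exactly this finite termination, and it cannot be obtained by naive rank-counting alone: by Corollary \ref{rank_between_subgroups} each step either preserves $\mR$ (when $D_{i+1}$ is open in $D_i$) or drops it strictly, and finiteness of $\mR$ bounds the strict drops, yet a chain of proper open subgroups can in principle descend forever. To exclude this I would, as in \cite[Theorem 5.24]{Kru5}, isolate a would-be ``perfect'' stable piece, i.e. a nontrivial closed $G$-invariant $D\le A$ with $\overline{[D,H]}=D$, and derive a contradiction by the generic-centralizer argument: picking a $\mu$-generic $a\in D$ and using Proposition \ref{existence_mu_generic} together with the Lascar inequalities (Proposition \ref{Lascar inequality}) to relate $\mR(\o(a/A))$ to the centralizer $C_H(a)$, one shows such a configuration forces an affine/field-like interpretable object incompatible with $(H,G)$ being a small compact profinite torsion (Proposition \ref{loc_finiteness}) $\mu$-group of finite rank, the dichotomy of Remark \ref{Remark:clopensubgroup} being used throughout to convert ``positive measure'' into ``open of finite index''. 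Since the earlier sections supply verbatim $\mu$-analogues of every step, the argument of \cite[Theorem 5.24]{Kru5} transfers without change, and I would invoke it to conclude that $H$ is nilpotent-by-finite.
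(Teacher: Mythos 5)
Your proposal matches the paper's approach: the paper gives no independent argument for this theorem, stating only that the proof of \cite[Theorem 5.24]{Kru5} (and \cite{KW}) transfers verbatim once the $\NM$-rank is replaced by $\mR$, which is exactly your strategy, and you correctly identify the supporting $\mu$-analogues (Proposition \ref{existence_mu_generic}, Corollary \ref{rank_between_subgroups}, Proposition \ref{Lascar inequality}, Remark \ref{Remark:clopensubgroup}, Proposition \ref{loc_finiteness}) that the paper establishes to make this transfer legitimate. Your additional sketch of the internal structure of Krupinski's argument goes beyond what the paper records, but since both you and the paper ultimately invoke \cite[Theorem 5.24]{Kru5}, the proposal is correct and essentially identical in method.
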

\noindent Combining Theorem \ref{str_small_cpt_mu_group}, and \ref{str_small_cpt_finiterank_mu_group}, we have the following result.
\begin{corollary}
If $(H,G)$ is a small compact $\mu$-group of finite $\mu$-rank, then $H$ is nilpotent-by-finite.
\end{corollary}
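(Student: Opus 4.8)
The plan is to deduce this corollary by chaining together the two preceding structure theorems, Theorem \ref{str_small_cpt_mu_group} and Theorem \ref{str_small_cpt_finiterank_mu_group}. Observe that the hypotheses of the corollary are exactly what is needed to feed the output of the first theorem into the second: the first produces solvability-by-finiteness from $\mu$-stability, and the second upgrades this to nilpotency-by-finiteness once finite $\mu$-rank is assumed. So the entire argument reduces to one small bridging step, namely that a small compact $\mu$-group of finite $\mu$-rank is automatically $\mu$-stable.

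First I would verify this bridging step. By Definition \ref{mu-stability}, $(H,G)$ is $\mu$-stable precisely when every $1$-orbit has ordinal $\mu$-rank. If $(H,G)$ has finite $\mu$-rank, then in particular $\mR(H)=\mR(a/\emptyset)$ is a finite ordinal for each $a\in H$, and for any finite base $A$ the monotonicity of the $\mu$-rank under enlarging the base (immediate from Definition \ref{mu-rank}) gives $\mR(a/A)\le \mR(a/\emptyset)<\infty$. Hence every $1$-orbit has an ordinal (indeed finite) $\mu$-rank, so $(H,G)$ is $\mu$-stable; alternatively one may invoke the characterizations in Proposition \ref{equvialence_mu_stability}, since a finite bound on ranks rules out the infinite dependent chains appearing there.

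Having established $\mu$-stability, I would then apply Theorem \ref{str_small_cpt_mu_group} to the small, $\mu$-stable, compact $\mu$-group $(H,G)$ to conclude that $H$ is solvable-by-finite. At this point all hypotheses of Theorem \ref{str_small_cpt_finiterank_mu_group} are in place: $(H,G)$ is a small compact $\mu$-group of finite $\mu$-rank which is solvable-by-finite. Applying that theorem yields that $H$ is nilpotent-by-finite, which is exactly the desired conclusion.

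I do not expect any genuine obstacle here, as the substance of the result is entirely contained in the two cited theorems. The only point requiring a line of justification is the implication from finite $\mu$-rank to $\mu$-stability, and even this is essentially formal, following from the definitions of the $\mu$-rank and $\mu$-stability together with monotonicity of the rank in the parameter set.
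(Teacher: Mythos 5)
Your proof is correct and takes essentially the same route as the paper, which derives the corollary precisely by chaining Theorem \ref{str_small_cpt_mu_group} (to get solvable-by-finite) into Theorem \ref{str_small_cpt_finiterank_mu_group} (to upgrade to nilpotent-by-finite). The only difference is that the paper leaves implicit the bridging step that finite $\mu$-rank implies $\mu$-stability, which you spell out and justify correctly via monotonicity of the $\mu$-rank and Proposition \ref{equvialence_mu_stability}.
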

\noindent By the same proofs in \cite{KW} with Theorem \ref{description_mu_independence}, Fact \ref{fact:stabilizer_clopensets_relativised_mu_independence}, and Remark \ref{Remark:clopensubgroup}, we have finally the following structure theorem for small, $\mu$-stable compact $\mu$-groups.
\begin{theorem}\label{Thoerem:str_small stable compact groups}
Let $(H,G)$ be a small compact $\mu$-group.
\begin{enumerate}
	\item If $(H,G)$ is $\mu$-stable, then $H$ is nilpotent-by-finite.
	\item If $\mR(H)<\omega$ or $\mR(H)=\omega^{\alpha}$ for some ordinal $\alpha$, then $H$ is abelian-by-finite.
	\item If $H$ is countable-by-abelian-by-countable, then $H$ is abelian-by-finite.
\end{enumerate}
\end{theorem}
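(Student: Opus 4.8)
The plan is to carry out the transfer advertised just before the statement: the proofs in \cite{KW} are written for $nm$-independence and the $\NM$-rank, and I would re-run them essentially verbatim after replacing ``non-meager'' by ``positive Haar measure'', $\indo^{nm}$ by $\indo^{\mu}$, and the $\NM$-rank by the $\mu$-rank. For this to be legitimate I would first record that the $\mu$-analogues of every formal tool used in \cite{KW} are already in place: the basic $4$ axioms (Theorem \ref{properties_mu-independence}), the Lascar inequalities for the $\mu$-rank (Proposition \ref{Lascar inequality}), the existence and measure-characterization of $\mu$-generic orbits (Proposition \ref{existence_mu_generic}), and the rank comparison between closed definable subgroups (Corollary \ref{rank_between_subgroups}). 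Together these reproduce the abstract engine on which \cite{KW} runs.

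The conceptual reason the transfer is clean is the dictionary of Remark \ref{Remark:clopensubgroup}: for a closed subgroup $K$ of a compact group $H$, being open, having non-empty interior, having countable index, and satisfying $K\subset_{\mu}H$ are all equivalent. Every subgroup that \cite{KW} tests for largeness is a closed definable subgroup, so ``non-meager'' and ``positive measure'' select exactly the same subgroups here; hence $\mu$-independence and $nm$-independence make the same declarations on the relevant objects (cf. Remark \ref{rem:cpt_str_equiv_mu_nm_independence}). I also need the concrete topological inputs of \cite{KW} in $\mu$-form: that stabilizers of clopen sets are clopen of countable index and that largeness is preserved under passing to a closed subgroup of countable index (Fact \ref{fact:stabilizer_clopensets_relativised_mu_independence}), and that $\mu$-independence is computed intrinsically on $H$ itself, which holds because for compact $G$ both hypotheses of Theorem \ref{description_mu_independence} are satisfied by Remark \ref{description_mu_independence_in_some_exmaples}(2).

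With this machinery in hand I would argue the three parts as follows. For (1), Theorem \ref{str_small_cpt_mu_group} already gives that a small $\mu$-stable compact $\mu$-group is solvable-by-finite; the upgrade to nilpotent-by-finite is the \cite{KW} argument, an ordinal induction on the $\mu$-rank in which each descent through a measure-zero definable subgroup strictly lowers the rank by Corollary \ref{rank_between_subgroups}(1), so that the finite-rank conclusion (Theorem \ref{str_small_cpt_finiterank_mu_group}) is pushed to the general $\mu$-stable case. For (2), the refinement to abelian-by-finite under $\mR(H)<\omega$ or $\mR(H)=\omega^{\alpha}$ is the \cite{KW} argument bounding the rank of the derived subgroup through the inequalities of Proposition \ref{Lascar inequality}: the hypothesis on the shape of $\mR(H)$ is exactly what forces $\mR(H')<\mR(H)$ to propagate to triviality of $H'$ modulo an open subgroup. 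For (3), countable-by-abelian-by-countable collapses because, by Remark \ref{Remark:clopensubgroup}, a countable closed normal subgroup is measure-zero while a countable-index subgroup is open; combined with $H$ being a profinite torsion group (Proposition \ref{loc_finiteness}(2)) this upgrades ``countable'' to ``finite'' at the rank level, yielding abelian-by-finite.

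The main obstacle is bookkeeping rather than a new idea: one must verify that no step of \cite{KW} secretly uses a Baire-category property without a measure counterpart, and in particular that every largeness test there is applied either to a closed definable (hence clopen-or-null) subgroup or to an orbit to which Theorem \ref{description_mu_independence} applies. The one genuinely delicate point is the rank arithmetic in part (2): the dichotomy $\mR(H)<\omega$ versus $\mR(H)=\omega^{\alpha}$ must be matched against the additive and natural-sum inequalities of Proposition \ref{Lascar inequality} exactly as in \cite{KW}, since it is precisely there that the ordinal shape of the rank is leveraged, and an incautious substitution of the $\mu$-rank for the $\NM$-rank is most likely to go wrong at this step.
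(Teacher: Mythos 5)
Your proposal is correct and takes essentially the same approach as the paper: the paper likewise proves the theorem by transferring the arguments of \cite{KW} verbatim to the $\mu$-setting, invoking exactly the tools you identify (Theorem \ref{description_mu_independence}, Fact \ref{fact:stabilizer_clopensets_relativised_mu_independence}, and Remark \ref{Remark:clopensubgroup}) as what legitimizes replacing $\indo^{nm}$ by $\indo^{\mu}$ and the $\NM$-rank by the $\mu$-rank. Your additional bookkeeping (the basic axioms, Lascar inequalities, $\mu$-generics, and Corollary \ref{rank_between_subgroups}) only makes explicit what the paper leaves implicit.
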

%\begin{corollary}
%If $(H,G)$ is a small, $\mu$-stable, compact $\mu$-group, then $H$ has a $*$-closed, nilpotent subgroup of finite index.
%\end{corollary}
%\begin{proposition}\label{small_cpt_mu_group_*-cld_abelian}
%Every infinite, small, $\mu$-stable compact $\mu$-group $(H,G)$ has an infinite, $*$-closed, abelian subgroup.
%\end{proposition}
%As a corollary of Proposition \ref{small_cpt_mu_group_*-cld_abelian}, we have the following structural theorem for small, compact $\mu$-group $\mu$-rank $1$.
%\begin{corollary}
%Every small, compact, $\mu$-group of $\mu$-rank $1$ is abelian-by-finite.
%\end{corollary}

\section{Counter example for Question \ref{question_equivalence_meager_mu}}\label{Examples}\label{Exmaples}
We give a $\mu$-structure $(X,G)$ where $(X,G)$ is a profinite structure where $\mu$-independence and $nm$-independence are different.
\begin{example}\label{different_meager_mu_no_small}
For a $n\ge 1$, let $[n]:=\{1,2,\ldots, n\}$. Consider the symmetric group $G=S_n$ acting on the set $[n]$ for $n\ge 2$. Let $H_1^n(=:H_1)$ be the group of elements fixing $n$ and $H_2^n(=:H_2)$ be the group of elements fixing $1$. Note that $H_1H_2\neq G$ because there is no element in $H_1 H_2$ sending $1$ to $n$. Since $H_2$ contains the $(n-1)$-cycle permuting $\{2,\ldots,n\}$, $|H_1H_2|\ge (n-1)!(n-1)$ and so $|H_1H_2|$ can be arbitrary close to $1$ for large enough $n$.

Fix a sequence of rational numbers $0<x_i<1$ such that the infinite product $x_0x_1 x_2\ldots >1/2$. For each $i$, take $n_i$ large enough so that $|H_1^{n_i}H_2^{n_i}|\ge x_i$. Consider the infinite product group $G=\prod_i S_{n_i} $ acting on the set $X=\prod_i [n_i]$, which is a compact structure. Really $X$ and $G$ are countably many products of finite discrete sets and so they are compact Polish spaces. But $(X,G)$ is not small. The group $G$ has the Haar measure which is just the product of the counting measures on the groups $S_{n_i}$. Let $H_1:=\prod_i H_1^{n_i}$ and $H_2:=\prod_i H_2^{n_i}$, which are the stabilizer of $(n_i)$ and $(1)$ respectively. Then the measure of $H_1H_2$ is at least $1/2$, but $H_1H_2$ has empty interior because $G$ is equipped with the product topology and $H_1^{n_i}H_2^{n_i}\neq G$ for each $i\ge 0$. From this, we conclude that $a\indo^{\mu} b$ but $a\depo^{nm}b$ for $a=(1), b=(n_i)\in X$. Actually, $\indo^{nm}$ does not satisfy the extension axiom in this case.  Take finite sets $A,B,C\subset X$. We write $A=\{(a_{ji})|j<p\}$, $B=\{(b_{ji}, j<q)\}$, and $C=\{(c_{ji}, j<r)\}$. For each $i$, let $A(i):=\{a_{ji},j<p\}$, $B(i):=\{b_{ji},j<q\}$, $C(i):=\{c_{ji},j<r\}$. We have that $C\indo^{nm}_A B$ if and only if $\{i\ge 0|\ G_{A(i)C(i)}G_{A(i)B(i)}=G_{A(i)}\}$ is cofinite if and only if either $\{i\ge 0|\ G_{A(i)C(i)}=G_{A(i)} \}$ or $\{i\ge 0|\ G_{A(i)B(i)}=G_{A(i)} \}$ is cofinite if and only if either $C\subset \acl(A)$ or $B\subset \acl(A)$. If we take $C,B\not\subset\acl(A)$, then $\sigma C\indo^{nm}_A B$ for all $\sigma \in G$.  

Next we show that $(X,G)$ satisfies the extension axiom by choosing the sequence $(n_i)$ properly. Note that $(X,G)$ is not small. We fix an increasing sequence $(n_i)$ of positive integers such that for any integers $p,q,r\ge 0$, there is $i_0$ such that for each $i\ge i_0$ and integers $0\le p_i\le p$, $0\le q_i\le q$, $0\le r_i\le r$, the infinite product $\prod_{i\ge i_0}\limits d_i $ is non-zero, where \begin{align*}
d_i&=\frac{\prod_{0\le k<q_i}\limits(n_i-(p_i+q_i+k))}{\prod_{0\le k<r_i}\limits (n_i-(p_i+k))}\\
&=\prod_{{0\le k<r_i}}(1-\frac{q_i}{n_i-(p_i+k)}).
\end{align*}
Note that $\sum \ln(1-a_i)$ converges if and only if $\sum a_i$ converges for a sequence $(a_i)$ of non-negative real numbers. So, if we take $n_i=2^i$, then it works.

\begin{claim}\label{extension_axiom_(X,G)}
The extension axiom holds for $\mu$-independence of $(X,G)$.
\end{claim}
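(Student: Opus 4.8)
The plan is to compute $\mu_A(G_{AB}G_{Aa'})$ explicitly, exploiting the product structure of $(X,G)$, and to choose the representative $a'$ levelwise so that the resulting infinite product of local measures is positive. The whole argument reduces to a levelwise factorial count, and the convergence will be handed to us by the choice $n_i=2^i$ already set up.

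First I would record that everything factors over coordinates. Since $G=\prod_i S_{n_i}$ and the pointwise stabilizer of a finite set splits coordinatewise, $G_A=\prod_i (S_{n_i})_{A(i)}$, $G_{AB}=\prod_i (S_{n_i})_{A(i)B(i)}$, and $G_{Aa'}=\prod_i (S_{n_i})_{A(i)a'(i)}$, where $A(i),B(i),a'(i)\subseteq[n_i]$ are the sets of $i$-th coordinates. The Haar measure $\mu_A$ on $G_A$ is the product of the normalized counting measures on the finite groups $(S_{n_i})_{A(i)}$, and $G_{AB}G_{Aa'}=\prod_i\big((S_{n_i})_{A(i)B(i)}(S_{n_i})_{A(i)a'(i)}\big)$ is again a product set. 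Hence
$$\mu_A(G_{AB}G_{Aa'})=\prod_i\frac{\big|(S_{n_i})_{A(i)B(i)}(S_{n_i})_{A(i)a'(i)}\big|}{\big|(S_{n_i})_{A(i)}\big|},$$
and by Proposition \ref{equiv_mu_indo} it suffices to make this product positive with $a'\in\o(a/A)=\prod_i\o(a(i)/A(i))$.

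Next I would construct $a'$ and evaluate each factor. Write $p_i=|A(i)|$, $q_i=|B(i)\setminus A(i)|$, and let $r_i=|a(i)\setminus A(i)|$ be the number of distinct $i$-th coordinates of $a$ lying outside $A(i)$ (preserved along the orbit); these are bounded by $|A|$, $|B|$, and $\len(a)$ respectively. Since $n_i=2^i\to\infty$, for all but finitely many $i$ the set $[n_i]\setminus(A(i)\cup B(i))$ has at least $r_i$ elements, so I may choose $a'(i)\in\o(a(i)/A(i))$ whose values outside $A(i)$ all avoid $B(i)$; for the remaining finitely many $i$ I take any representative of $\o(a(i)/A(i))$. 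Using $|H_1H_2|=|H_1|\,|H_2|/|H_1\cap H_2|$ with $H_1=(S_{n_i})_{A(i)B(i)}$, $H_2=(S_{n_i})_{A(i)a'(i)}$, $H_1\cap H_2=(S_{n_i})_{A(i)B(i)a'(i)}$, and $|(S_{n_i})_S|=(n_i-|S|)!$, each factor at a ``good'' level (where the $r_i$ new values are disjoint from $A(i)\cup B(i)$) equals
$$\frac{(n_i-p_i-q_i)!\,(n_i-p_i-r_i)!}{(n_i-p_i-q_i-r_i)!\,(n_i-p_i)!}=\prod_{0\le k<r_i}\left(1-\frac{q_i}{n_i-(p_i+k)}\right)=d_i.$$

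Finally I would conclude. Every factor is a ratio of positive factorials, hence strictly positive, so the finitely many levels where avoidance fails contribute a nonzero finite product; on the cofinitely many good levels the factor is exactly $d_i$, and by the choice of $(n_i)$ made above (applied to $p=|A|$, $q=|B|$, $r=\len(a)$) the tail product $\prod_{i\ge i_0}d_i$ is nonzero. Therefore $\mu_A(G_{AB}G_{Aa'})>0$, i.e. $a'\indo^{\mu}_A B$ with $a'\in\o(a/A)$, which is the extension axiom. The delicate point is not the convergence, which the construction of $n_i$ already guarantees, but the factor computation together with the observation that $a'$ must be chosen to route its new coordinates away from $B(i)$: routing even one new coordinate into $B(i)\setminus A(i)$ would replace a factor close to $1$ by a factor of order $1/(n_i-p_i)$, and infinitely many such choices would drive the product to $0$.
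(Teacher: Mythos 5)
Your proposal is correct and follows essentially the same route as the paper's proof: a coordinatewise factorial computation of the measure of $G_{AB}G_{Aa'}$ as an infinite product, with the representative $a'$ chosen levelwise (via $\sigma_i\in (S_{n_i})_{A(i)}$) so that the new coordinates avoid $B(i)$, after which positivity follows from the pre-arranged choice of $(n_i)$ guaranteeing $\prod_i d_i\neq 0$. The only cosmetic differences are that you make the product-measure factorization explicit and handle the finitely many "bad" levels by taking arbitrary representatives, where the paper instead assumes WLOG that $p+q+r<n_i$ for all $i$.
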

\begin{proof}
Take finite subsets $A,B,C\subset X$. Let $p=|A|$, $q=|B|$, and $r=|C|$. For each $i$, let $G(i):=S_{n_i}$. We will find $\sigma:=(\sigma_i)\in G$ such that $\prod_{i} D_i$ is non-zero, where for each $i$, $$D_i:=\frac{|G(i)_{A(i)B(i)}G(i)_{A(i)\sigma_i[C(i)]}|}{|G(i)_{A(i)}|}.$$

Take $i_0$ such that $p+q+r<n_{i_0}$. WLOG we may assume that for all $i$, $p+q+r<n_i$. Fix $i$. Let $p_i=|A(i)|$, $q_i=|B(i)\setminus A(i)|$, and $r_i=|C(i)\setminus A(i)|$. If $r_i=0$, then $D_i=1$ and take $\sigma_i=\id\in G(i)_{A(i)}$. If $r_i>0$, we take $\sigma_i \in G(i)_{A(i)}$ such that $r_i=|\sigma_i[C(i)]\setminus A(i)B(i)|$. Then we have that
\begin{itemize}
	\item $|G(i)_{A(i)}|=(n_i-p_i)!$,
	\item $|G(i)_{A(i)B(i)}|=(n_i-(p_i+q_i))!$,
	\item $|G(i)_{A(i)\sigma_i[C(i)]}|=(n_i-(p_i+r_i))!$, and
	\item $|G(i)_{A(i)B(i)}\cap G(i)_{A(i)\sigma_i[C(i)]}|=|G(i)_{A(i)B(i)\sigma_i[C(i)]}|=(n_i-(p_i+q_i+r_i))!$.
\end{itemize}
Then, \begin{align*}
|G(i)_{A(i)B(i)}G(i)_{A(i)\sigma_i[C(i)]}|&=|G(i)_{A(i)B(i)}| |G(i)_{A(i)\sigma_i[C(i)]}/G(i)_{A(i)B(i)\sigma_i[C(i)]}|\\
&=\frac{|G(i)_{A(i)B(i)}||G(i)_{A(i)\sigma_i[C(i)]}|}{|G(i)_{A(i)B(i)\sigma_i[C(i)]}|}\\
&=\frac{(n_i-(p_i+q_i))!(n_i-(p_i+r_i))!}{(n_i-(p_i+q_i+r_i))!}.
\end{align*}
Therefore, we have that \begin{align*}
D_i&=\frac{|G(i)_{A(i)B(i)}G(i)_{A(i)\sigma_i[C(i)]}|}{|G(i)_{A(i)}|}\\
&= \frac{\prod_{0\le k<r_i}\limits(n_i-(p_i+q_i+k))}{\prod_{0\le k<r_i}\limits (n_i-(p_i+k))}=d_i.
\end{align*}
By the choice of $(n_i)$, $\prod D_i$ is non-zero, and $\sigma C\indo_A^{\mu} B$ where $\sigma:=(\sigma_i)\in G$.
\end{proof}
From the proof of Claim \ref{extension_axiom_(X,G)}, $\mu$-independence of $(X,G)$ is described as follows: For $C=\{(c_{ij})|\ j< |C|\}$, $A=\{(a_{ij}|\ j<|A|)\}$, and $B=\{(b_{ij}|\ j<|B|)\}$, $C\depo^{\mu}_A B$ if and only if the set $\{i|\ |C(i)\setminus A(i)B(i)|<|C(i)\setminus A(i)|\}$ is infinite. Let $p=|A|$, $q=|B|$, and $r=|C|$. For each $i$, let $p_i:=|A(i)|$, $q_i=|B(i)\setminus A(i)|$, $r_i=|C(i)\setminus A(i)|$, and $r_i'=|C(i)\setminus A(i)B(i)|$. WLOG we may assume that for all $i$, $n_i>p+q+r$. Then we have that
\begin{align*}
D_i'&:=\frac{|G(i)_{A(i)B(i)}G(i)_{A(i)C(i)}|}{|G(i)_{A(i)}|}\\
&=\frac{(n_i-(p_i+q_i))!(n_i-(p_i+r_i))!}{(n_i-p_i)!(n_i-(p_i+q_i+r_i'))!}\\
&=\frac{\prod_{0\le k< r_i'}\limits (n_i-(p_i+q_i+k))}{\prod_{0\le k<r_i}\limits (n_i-(p_i+k))}.
\end{align*}
Therefore, we have that $C\depo^{\mu}_A B$ if and only if $\prod D_i'$ is zero if and only if $\{i|\ r_i'<r_i\}$ is infinite.

For any countable infinite set $I$, there is a sequence $(I_k)$ of infinite subsets of $I$ such that $I_k$ is disjoint from $\bigcup_{j< k}I_j$ for each $k$. Fix a such sequence $(I_k)$. Then for any finite set $C\subset X$, there is a sequence $(A_k)$ of subset of $X$ such that $I_k=\{i|\ |C(i)\setminus \bigcup_{j\le k} A_j(i)|<|C(i)\setminus \bigcup_{j< k} A_j(i)|\}$ so that $C\depo^{\mu}_{A_{<j}} A_k$ for $A_{<k}=\bigcup_{j< k}A_j$. We conclude that $\mR(X,G)=\infty$.
\end{example}

\begin{question}\label{unsolved_question}
	For a small compact structure, $\indo^{\mu}=\indo^{nm}$?
%	\item Is there a compact group $G$ and a small compact $G$-space $X$ which is not metrizable?
	%And if such $G$-spaces exists, they are meaningful? What do we do with $\mu$-independence?
\end{question}

\noindent In \cite[Proposition 3.7]{N1}, Tanovic noticed that $nm$-independence and $\mu$-independence coincide in small $nm$-stable profinite structures. By adapting his proof, we show the same holds for small $nm$-stable compact structures. 
\begin{remark}\label{rem:cpt_str_equiv_mu_nm_independence}
Let $(X,G)$ be a small $nm$-stable compact structure. Then $\indo^{\mu}=\indo^{nm}$.
\end{remark}
\begin{proof}
Suppose $\indo^{\mu}\neq \indo^{nm}$. Take tuples $a,b\in X$ and a finite subset $A\subset X$ such that $a\indo^{\mu}_A b$ and $a\depo^{nm}_A b$ with $NM(a/b)$ small as possible. For the simplicity, we assume that $A=\emptyset$. 
\begin{claim}\label{claim1}
Let $b'\in \o(b)$ and let $a'\in \o(a)$ such that $a'b'\in \o(ab)$. Suppose $b\indo^{nm} b'$.  Then for any $a''\in \o(a/b)\cap \o(a'/b')$, $a''\depo^{nm}_{b'}bb'$.
\end{claim}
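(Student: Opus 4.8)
The plan is to argue by contradiction, reducing the desired dependence $a''\depo^{nm}_{b'}bb'$ to the standing assumption $a\depo^{nm}b$ (recall we have set $A=\emptyset$) using only the symmetry and transitivity of $nm$-independence. First I would extract the one fact about $a''$ that I actually need from the hypotheses: since $a''\in\o(a/b)$, there is $g\in G_b$ with $ga=a''$, and then $(a'',b)=g\cdot(a,b)$ lies in $\o(ab)$. By the invariance axiom for $\indo^{nm}$ the relation $a\depo^{nm}b$ is preserved under the induced $G$-action on pairs, so $a''\depo^{nm}b$. This is the statement I will eventually contradict.

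Next I would assume, toward a contradiction, that $a''\indo^{nm}_{b'}bb'$. Because $b'$ already lies in the base, $a''\indo^{nm}_{b'}bb'$ is literally the same relation as $a''\indo^{nm}_{b'}b$ (over the base $b'$, the parameter sets $b\cup b'$ and $b$ determine the same orbit $\o(a''/bb')$), and symmetry turns this into $b\indo^{nm}_{b'}a''$. I then feed this into transitivity along the chain $\emptyset\subseteq b'\subseteq b'a''$: the hypothesis $b\indo^{nm}b'$ together with $b\indo^{nm}_{b'}a''$ yields $b\indo^{nm}b'a''$. Right-monotonicity of $\indo^{nm}$ (immediate from the definition, since $\o(b/b'a'')\subseteq\o(b/a'')$ pulls back to a subset of the relevant preimage) gives $b\indo^{nm}a''$, and symmetry returns $a''\indo^{nm}b$, contradicting the previous paragraph. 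Hence $a''\depo^{nm}_{b'}bb'$, as claimed.

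This argument uses only the basic axioms (invariance, symmetry, transitivity) available for $nm$-independence in the compact --- hence Polish --- structure $(X,G)$, together with the elementary right-monotonicity of $\indo^{nm}$; neither smallness nor $nm$-stability is invoked at this step, and in fact the implication holds for every $a''\in\o(a/b)$. The extra hypotheses $a'\in\o(a)$, $a'b'\in\o(ab)$ and $a''\in\o(a'/b')$ are not needed for the implication itself: their role is to guarantee that $\o(a/b)\cap\o(a'/b')$ is non-empty (so that the claim is non-vacuous) and they will be used in the later claims. The only point demanding genuine care is the opening reduction, namely verifying that $(a'',b)$ really lies in the orbit $\o(ab)$ so that invariance transfers $a\depo^{nm}b$ to $a''$, and laying out the parameter chain $\emptyset\subseteq b'\subseteq b'a''$ so that transitivity applies in exactly the stated form. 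I do not expect any analytic obstacle here; the entire content is the bookkeeping of moving the base around via symmetry and transitivity.
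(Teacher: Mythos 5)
Your proof is correct and follows essentially the same route as the paper's: assume $a''\indo^{nm}_{b'}bb'$, use the basic axioms to derive $a''\indo^{nm}b$, and contradict $a\depo^{nm}b$ via invariance along $a''\in\o(a/b)$. The only difference is that you unpack the paper's terse ``by transitivity'' into its actual content --- symmetry, transitivity along $\emptyset\subseteq b'\subseteq b'a''$ (which is precisely where the hypothesis $b\indo^{nm}b'$ enters), monotonicity, and symmetry again --- which is a faithful, more careful rendering of the same argument.
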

\begin{proof}
Take $a''\in \o(a/b)\cap \o(a'/b')$ arbitrary. If $a''\indo^{nm}_{b'}bb'$, then by transitivity, $a''\indo^{nm}b$. Since $a''\in \o(a/b)$, $a\indo^{nm}b$, which is  a contradiction.
\end{proof}
\noindent By Theorem \ref{description_mu_independence}, there is a finite quasi-invariant Borel measure $\nu$ on $(\o(a),G)$ such that $a\indo^{\mu}B\Leftrightarrow \o(a/B)\subset_{\nu}\o(a).$
\begin{claim}\label{claim2}
$\nu(\o(a/b)\cap\o(a'/b'))=0$.
\end{claim}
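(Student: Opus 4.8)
The plan is to show that the intersection $\o(a/b)\cap\o(a'/b')$ breaks into countably many $G_{bb'}$-orbits and then to kill each of them using the minimality of $NM(a/b)$. First I would check that the intersection is invariant under $G_{bb'}=G_b\cap G_{b'}$: if $a''$ lies in it and $g\in G_{bb'}$, then $ga''\in\o(a''/b)=\o(a/b)$ and $ga''\in\o(a''/b')=\o(a'/b')$, so in fact $\o(a''/bb')\subseteq\o(a/b)\cap\o(a'/b')$. Since $(X,G)$ is small, $G_{bb'}$ has only countably many orbits, so $\o(a/b)\cap\o(a'/b')$ is a countable union of orbits of the form $\o(a''/bb')$. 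Assuming for contradiction that $\nu(\o(a/b)\cap\o(a'/b'))>0$, countable additivity of $\nu$ produces some $a''\in\o(a/b)\cap\o(a'/b')$ with $\nu(\o(a''/bb'))>0$.

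Next I would convert this positive measure back into $\mu$-independence. Since $a''\in\o(a)$ we have $\o(a'')=\o(a)$, and by Theorem \ref{description_mu_independence} applied with basepoint $a''$ (using Fact \ref{quasi_inv_and_equivalence} to see that positivity does not depend on the chosen quasi-invariant measure), $\nu(\o(a''/bb'))>0$ yields $a''\indo^{\mu}bb'$. Transitivity of $\mu$-independence (Theorem \ref{transitivity}, available because $G$ is compact), applied to $\emptyset\subseteq b'\subseteq bb'$, then gives $a''\indo^{\mu}_{b'}bb'$. On the other hand, Claim \ref{claim1} (whose hypothesis $b\indo^{nm}b'$ is in force) tells us $a''\depo^{nm}_{b'}bb'$. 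Thus $(a'',bb')$ over the base $b'$ is a new witness to the failure of $\indo^{\mu}=\indo^{nm}$.

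Finally I would rule this out by a rank comparison. Because $a''\in\o(a'/b')$ we get $NM(a''/b')=NM(a'/b')$, and because $a'b'\in\o(ab)$ we get $NM(a'/b')=NM(a/b)$ by invariance of the $nm$-rank; hence $NM(a''/b')=NM(a/b)$. Applying the Lascar inequality for the $nm$-rank (cf. \cite{Kru5}) to $a''\depo^{nm}_{b'}b$ (over $b'$ the parameter $bb'$ reduces to $b$), and using that $nm$-stability gives $NM(a''/b')<\infty$, forces $NM(a''/bb')<NM(a''/b')=NM(a/b)$. But then $(a'',bb')$ over $b'$ is a witness of strictly smaller $nm$-rank than the minimal value $NM(a/b)$, contradicting the choice of $a,b$. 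Therefore $\nu(\o(a/b)\cap\o(a'/b'))=0$. I expect the main subtlety to be precisely this last step: one must read the minimality of $NM(a/b)$ as taken over \emph{all} base sets (the reduction to $A=\emptyset$ being purely notational), so that the witness over base $b'$ is legitimately comparable, and one must verify carefully that the two orbit equivalences indeed yield $NM(a''/b')=NM(a/b)$ before invoking the Lascar inequality.
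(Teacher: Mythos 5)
Your proposal is correct and follows essentially the same route as the paper: use $\sigma$-additivity over the (countably many, by smallness) $G_{bb'}$-orbits covering the intersection to find $a''$ with $\nu(\o(a''/bb'))>0$, convert this to $\mu$-independence via Theorem \ref{description_mu_independence}, invoke Claim \ref{claim1} for the $nm$-dependence, and contradict the minimality of $NM(a/b)$ by a rank drop. The only cosmetic difference is that you phrase the new counterexample over the base $b'$ (hence your extra appeal to transitivity of $\indo^{\mu}$), whereas the paper keeps it over $\emptyset$ using $a''\indo^{\mu}bb'$ and $a''\depo^{nm}bb'$; your explicit chain $NM(a''/bb')<NM(a''/b')=NM(a'/b')=NM(a/b)$ is in fact a careful justification of the inequality the paper displays.
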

\begin{proof}
Suppose $\nu(\o(a/b)\cap\o(a'/b'))>0$. By $\sigma$-additivity, there is $a''\in \o(a/b)\cap\o(a'/b')$ such that $\nu(\o(a''/bb'))>0$, and $a''\indo^{\mu}bb'$. By Claim \ref{claim1}, we have $a''\depo^{nm}_{b'} bb'$ and $a''\depo^{nm} bb'$. So we have that $$NM(a''/bb')<NM(a''/b)=NM(a/b),$$ which contradicts with the minimality of $NM(a/b)$.
\end{proof}
\noindent Take $n\ge 1$ such that $n\alpha>\nu(\o(a))$. Take $b_0,b_1,\ldots, b_{n-1}\in\o(b)$ such that $b_i\indo^{nm}b_0\ldots b_{i-1}$ for $i\le n-1$, and choose $a_0,\ldots,a_{n-1}\in \o(a)$ such that $a_ib_i\in\o(ab)$. By Claim \ref{claim1} and \ref{claim2}, we have $$\nu(\o(a))\ge \nu(\bigcup_i\limits  \o(a_i/b_i))=\sum_{i}\nu(\o(a_i/b_i))=n\alpha >\nu (\o(a)),$$ which is impossible.

\end{proof}

\begin{question}\label{question:compare_mu_nm}
\begin{enumerate}
	\item For a small $nm$-stable $\mu$-structure, $\indo^{\mu}=\indo^{nm}$?
	\item For a small $\mu$-stable $\mu$-structure, $\indo^{\mu}=\indo^{nm}$?
	\item Let $(H,G)$ be a small, $\mu$-stable compact $\mu$-group such that $G$ is compact. Is $G$ a Polish compact group?
\end{enumerate}

\end{question}

\appendix

\section{Profinite branch groups give small actions\\ \textnormal{Michael Cohen and Phillip Wesolek}}

\subsection{Preliminaries}
A \textbf{rooted tree} $T$ is a locally finite tree with a distinguished vertex $r$ called the \textbf{root}. Letting $d$ be the usual graph metric, the \textbf{levels} of $T$ are the sets $V_n:=\{v\in T\mid d(v,r)=n\}$. The \textbf{children} of a vertex $v\in V_n$ is collection of $w\in V_{n+1}$ such that there is an edge from $v$ to $w$.

When vertices $k$ and $w$ lie on the same path to the root and $d(k,r)\leq d(w,r)$, we write $k\leq w$. Given a vertex $s\in T$, the \textbf{tree below $s$}, denoted $T^s$, is the collection of $t$ such that $s\leq t$ equipped with the induced graph structure.

We call a rooted tree \textbf{spherically homogeneous} if all $v$ and $w$ in $V_n$ the number of children of $v$ is the same as the number of children of $w$. A spherically homogeneous tree is completely determined by specifying the number of children of the vertices at each level. These data are given by an infinite sequence $\alpha\in \BN^{\BN}$ such that $\alpha(i)\geq 2$ for all $i\in \BN$;  the condition $\alpha(i)\geq 2$ ensures that there are no ``only children." We denote a spherically homogeneous tree by $T_{\alpha}$ for $\alpha\in \BN_{\geq 2}^{\BN}$. 

Profinite branch groups are certain closed subgroups of $\Aut(T_{\alpha})$; our approach to branch groups follows closely Grigorchuk's presentation in \cite{G}. For $G\leq \Aut(T_{\alpha})$ a closed subgroup and for a vertex $v\in T_{\alpha}$, the \textbf{rigid stabilizer of $v$} in $G$ is defined to be
\[
\rist_G(v):=\{g\in G\mid g(w)=w \text{ for all }w\in T_{\alpha}\setminus T_{\alpha}^v\}.
\]
The rigid stabilizer acts non-trivially only on the subtree $T^v_{\alpha}$. 

The \textbf{$n$-th rigid level stabilizer} in $G$ is defined to be
\[
\rist_G(n):=\grp{\rist_G(v)\mid v\in V_n}.
\]
It is easy to see that $\rist_G(n)\simeq \prod_{v\in V_n}\rist_G(v)$, and as a consequence, $\rist_G(n)$ is a closed subgroup of $G$.

\begin{definition}\label{df:profinitebranch}
A profinite group $G$ is said to be a \textbf{profinite branch group} if there is a tree $T_{\alpha}$ for some $\alpha\in \BN_{\geq 2}^{\BN}$ such that the following hold:
\begin{enumerate}
\item $G$ is isomorphic to a closed subgroup of $\Aut(T_{\alpha})$. 
\item $G$ acts transitively on each level of $T_{\alpha}$.
\item For each level $n$, the index $|G:\rist_G(n)|$ is finite.
\end{enumerate}
\end{definition}
\noindent We always identify a profinite branch group $G$ with the isomorphic closed subgroup of $\Aut(T_{\alpha})$. 

The rigid level stabilizers form a basis at $1$ for the topology on a profinite branch group $G$. The transitivity of the action on the levels ensures that $\rist_G(v)\simeq \rist_G(w)$ for all $v$ an $w$ in $V_n$. The transitivity further insures that profinite branch groups are always infinite.

Rooted trees admit a boundary; we restrict our definitions to the trees $T_{\alpha}$, although this is unnecessary. The \textbf{boundary} of $T_{\alpha}$, denoted by $\partial T_{\alpha}$, is the collection of infinite sequences $(v_1,v_2,\dots)$ such that $v_1=r$ and $v_{i+1}$ is a child of $v_i$. The boundary admits a canonical topology. For a finite sequence $\overline{s}:=(s_1,\dots,s_n)$ such that $s_1=r$ and $s_{i+1}$ is a child of $s_i$ for $1\leq i<n$, define 
\[
\Sigma_{\overline{s}}:=\{(v_i)_{i\in \BN}\in \partial T_{\alpha}\mid v_i=s_i\text{ for } 1\leq i\leq n\}.
\]
 The collection of the sets $\Sigma_{\overline{s}}$ where $\overline{s}$ ranges of all finite sequences of the described form gives a basis for a topology on $\partial T_{\alpha}$. This topology makes $\partial T_{\alpha}$ into a Cantor space.
 
Given a profinite branch group $G\leq \Aut(T_{\alpha})$, the action of $G$ on $T_{\alpha}$ clearly extends to a continuous action by homeomorphisms on $\partial T_{\alpha}$. We call the action of $G$ on $\partial T_{\alpha}$ the \textbf{boundary action}.

\subsection{The boundary action is small}

\begin{lemma}\label{lem:rist_orbits}
For $G\leq \Aut(T_{\alpha})$ a profinite branch group and $v\in T_{\alpha}$, $\rist_G(v)$ has finitely many orbits on $\partial T_{\alpha}^v$.
\end{lemma}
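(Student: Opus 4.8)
The plan is to reduce the statement about the boundary $\partial T_{\alpha}^v$ to a uniform bound on the number of orbits at each finite level, and then to pass to the limit using compactness of $\rist_G(v)$. Fix $v\in V_n$, and for $m\geq n$ write $V_m^v:=\{u\in V_m\mid v\leq u\}$ for the set of vertices of $T_{\alpha}^v$ at level $m$, so that $\partial T_{\alpha}^v=\varprojlim_m V_m^v$ as a $\rist_G(v)$-space (the bonding maps being the truncation maps $V_{m+1}^v\to V_m^v$). First I would show that the number of $\rist_G(v)$-orbits on each finite set $V_m^v$ is bounded by $|G:\rist_G(n)|$, independently of $m$.

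For that finite-level bound I would exploit the product decomposition $\rist_G(n)\simeq\prod_{w\in V_n}\rist_G(w)$ recorded above. Since each factor $\rist_G(w)$ fixes every vertex outside $T_{\alpha}^w$, the level set $V_m=\bigsqcup_{w\in V_n}V_m^w$ is a partition into $\rist_G(n)$-invariant blocks, and on the block $V_m^v$ only the factor $\rist_G(v)$ acts nontrivially; hence the $\rist_G(n)$-orbits inside $V_m^v$ are exactly the $\rist_G(v)$-orbits. Consequently the number of $\rist_G(v)$-orbits on $V_m^v$ is at most the number of $\rist_G(n)$-orbits on all of $V_m$. Now condition (2) of Definition \ref{df:profinitebranch} gives that $G$ is transitive on $V_m$, and condition (3) gives $|G:\rist_G(n)|<\infty$; since a finite-index subgroup has at most $|G:\rist_G(n)|$ orbits on a transitive $G$-set (the orbits correspond to double cosets), I obtain the uniform bound
$$\#\big(V_m^v/\rist_G(v)\big)\leq |G:\rist_G(n)|\quad\text{for all }m\geq n.$$

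Finally I would pass from the finite levels to the boundary. The orbit counts $\#(V_m^v/\rist_G(v))$ are nondecreasing in $m$ (the $\rist_G(v)$-equivariant surjections $V_{m+1}^v\to V_m^v$ send orbits onto orbits) and bounded, so they stabilize at some $N\leq |G:\rist_G(n)|$. I expect the one genuinely topological step to be the identity $\partial T_{\alpha}^v/\rist_G(v)=\varprojlim_m\big(V_m^v/\rist_G(v)\big)$, equivalently the assertion that two boundary points lying in distinct $\rist_G(v)$-orbits already have distinct orbits at some finite level: if some $g_m\in\rist_G(v)$ carries $\xi$ onto $\eta$ up to level $m$ for every $m$, then, $\rist_G(v)$ being a closed and hence compact subgroup of the profinite group $G$, a convergent subnet $g_{m_k}\to g$ yields $g\xi=\eta$ by continuity of the action, so $\xi$ and $\eta$ share an orbit after all. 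This compactness argument is the heart of the matter; granting it, the orbit space $\partial T_{\alpha}^v/\rist_G(v)$ is the inverse limit of eventually constant finite sets of size $N$ with surjective bonding maps, hence finite of size $N$, which is the claim.
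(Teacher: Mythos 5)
Your proposal is correct and follows essentially the same strategy as the paper's proof: a uniform finite bound on the number of $\rist_G(v)$-orbits at each finite level (coming from level-transitivity together with $|G:\rist_G(n)|<\infty$), stabilization of these counts, and then compactness of $\rist_G(v)$ plus continuity of the action to show that agreement of orbits at every finite level forces agreement of orbits on $\partial T_{\alpha}^v$. The differences are only cosmetic: you derive the level bound from the product decomposition of $\rist_G(n)$ and a double-coset count in $G$, where the paper works inside the point stabilizer $G_{v}$ with the kernel $K$ of its action on $T_{\alpha}^v$; and you package the limiting step as the inverse-limit identity $\partial T_{\alpha}^v/\rist_G(v)=\varprojlim_m\bigl(V_m^v/\rist_G(v)\bigr)$, where the paper partitions the boundary into the sets $\partial O_i$ and proves each is a single orbit by the same successive-approximation-and-convergent-subsequence argument.
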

\begin{proof}
Since $G$ acts spherically transitively on $T_{\alpha}$, the point stabilizer $G_{v}$ acts spherically transitively on $T_{\alpha}^v$. Let $K$ be the kernel of the action of $G_{v}$ on $T^v_{\alpha}$. Supposing that $v$ is on level $n$, the rigid level stabilizer $\rist_G(n)$ is a finite index subgroup of $G_{v}$, as $G$ is a branch group. Furthermore, $\rist_G(w)\leq K$ for all $w\neq v$, so $K\rist_G(v)$ is of finite index in $G_{v}$. Say that $K\rist_G(v)$ is of index $s$ in $G_{v}$. 

Since $G_{v}$ acts  transitively on each level of $T_{\alpha}^v$ and $\rist_G(v)$ is normal in $G(v)$, it follows that $\rist_G(v)$ has at most $s$ many orbits on each level of $T_{\alpha}^v$. Let $m\geq n$ be the level such that $\rist_G(v)$ has the largest number of orbits on the $m$-th level of $T_{\alpha}^v$. Say $O_1,\dots,O_l$ lists the orbits of $\rist_G(v)$. For any $w$ and $w'$ on a level $k>m$ such that $w$ and $w'$ are descendants of some $o$ and $o'$ in $O_j$, we deduce that $w$ and $w'$ lie in the same orbit of $\rist_G(v)$ on level $k$, since else we contradict the maximality of $m$. We conclude that $\rist_G(v)$ has exactly $l$ orbits on every level $k\geq m$.

For each $O_i$, let $\partial O_i=\bigcup_{w\in O_i}\partial T^w_{\alpha}$. We argue that $\rist_G(v)$ acts transitively  on each $\partial O_i$. Take $\delta=(x_1,x_2,\dots)$ and $\xi=(y_1,y_2,\dots) $ in $O_i$. Let $k\geq m$ be the first level such that $x_k\neq y_k$. Since $x_k$ and $y_k$ are descendants of elements of $O_i$, there is some $g_1\in \rist_G(v)$ such that $g_1(x_k)=y_k$.  Setting $\delta_1:=g_1(\delta)$, we see that $\delta_1$ and $\xi$ share the first $k$ coordinates. Continuing in this fashion, we produce a sequence $g_i\in \rist_G(v)$ such that $g_i(\delta)\rightarrow \xi$. 

The subgroup $\rist_G(v)$ is compact, so by passing to a subsequence, we may assume that $g_i\rightarrow g$ with $g\in \rist_G(v)$. The element $g$ must be such that $g(\delta)=\xi$. Hence, $\rist_G(v)$ acts transitively on $\partial O_i$.
\end{proof}

\begin{theorem} 
For $G\leq \Aut(T_{\alpha})$ a profinite branch group, the action of $G$ on $\partial T_{\alpha}$ is small, the $\mu$-structure $(\partial T_{\alpha},G)$ has $\mu$-rank $1$, and $(\partial T_{\alpha},\mcl)$ forms a trivial pregeometry.
\end{theorem}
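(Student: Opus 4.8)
The three assertions share a common engine: Lemma \ref{lem:rist_orbits} together with the observation that each $\rist_G(v)$-orbit on $\partial T_{\alpha}^v$ is in fact \emph{clopen} and of positive measure. Indeed, $\rist_G(v)$ is compact and acts continuously, so each of its finitely many orbits on $\partial T_{\alpha}^v$ is a compact, hence closed, set; as finitely many closed sets partition $\partial T_{\alpha}^v$, each is also open. Moreover the natural quasi-invariant measure $\nu$ on $\partial T_{\alpha}$ has full support: if $U$ is nonempty open then finitely many $G$-translates of $U$ cover the compact space $\partial T_{\alpha}$, and quasi-invariance forces $\nu(U)>0$ (else $\nu$ would vanish). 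Hence every nonempty clopen subset of $\partial T_{\alpha}$ has positive measure. I record the consequence used throughout: $(\star)$ if a vertex $v$ lies on a boundary point $x$ yet off every ray of a finite set $S\subseteq \partial T_{\alpha}$, then (since the rays of $S$ avoid $T_{\alpha}^v$) one has $\rist_G(v)\leq G_S$, and $\rist_G(v)\cdot x$ is a clopen, positive-measure subset of $\o(x/S)$.

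For \textbf{smallness}, fix finite $A\subseteq \partial T_{\alpha}$. The finitely many points of $A$ are fixed by $G_A$, giving finitely many singleton orbits. Given $\eta\in \partial T_{\alpha}\setminus A$, let $m$ be the last level at which $\eta$ meets a vertex lying on some ray of $A$, and set $v:=\eta|_{m+1}$; since distinct rays of a tree never rejoin, $v$ lies off every ray of $A$. By $(\star)$, $\rist_G(v)\leq G_A$, and by Lemma \ref{lem:rist_orbits} the tail $\eta|_{\geq v}$ lies in one of finitely many $\rist_G(v)$-orbits on $\partial T_{\alpha}^v$. If two boundary points share the same data (which ray of $A$ is followed, the level $m$, the divergence child $v$, and the finite orbit label) then some element of $\rist_G(v)\leq G_A$ conjugates one to the other, so they lie in the same $G_A$-orbit. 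As this data ranges over a countable set, $G_A$ has only countably many orbits.

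For \textbf{rank $1$}, note first that $G$ acts transitively on $\partial T_{\alpha}$: level-transitivity yields $g_n$ with $g_n(a|_n)=b|_n$, and a convergent subnet $g_{n_j}\to g$ gives $ga=b$ by continuity (since $g_{n_j}a\to ga$ while $g_{n_j}a\to b$). Thus $\o(a/\emptyset)=\partial T_{\alpha}$ is an infinite Cantor space, so $a\notin\mcl(\emptyset)$ by Remark \ref{relation_between_closures}, and $\mR(a/\emptyset)\geq 1$ (witnessed by $B=\{a\}$, where $\o(a/a)=\{a\}$). The crux is the dichotomy: for a single $a$ and finite $B$, if $a\in B$ then $\o(a/B)=\{a\}$ is finite so $a\in\mcl(B)$; if $a\notin B$, pick $m$ larger than every level at which $a$ diverges from a point of $B$ and set $v:=a|_m$, which lies off all rays of $B$, so by $(\star)$ the orbit $\o(a/B)\supseteq\rist_G(v)\cdot a$ has positive measure and $a\indo^{\mu}_{\emptyset}B$ by Theorem \ref{description_mu_independence} (its hypotheses hold by the compact case of Remark \ref{description_mu_independence_in_some_exmaples}(2)). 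Equivalently, $a\depo^{\mu}_{\emptyset}B$ forces $a\in B$, whence $\mR(a/B)=0$; by Definition \ref{mu-rank} this gives $\mR(a/\emptyset)\leq 1$. Hence $\mR(a/\emptyset)=1$ for every $a$, and the structure has $\mu$-rank $1$.

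Finally, for the \textbf{trivial pregeometry}, $\mR(a/\emptyset)=1$ lets Proposition \ref{mu-rank1_pregeometry} make $(\partial T_{\alpha},\mcl^{eq}_{\emptyset})$ a pregeometry, and Remark \ref{characterization_of_acl}(2) identifies $a\in\mcl^{eq}_{\emptyset}(B)$ with $a\depo^{\mu}_{\emptyset}B$, i.e.\ by the dichotomy with $a\in B$. Therefore $\mcl^{eq}_{\emptyset}(B)\cap \partial T_{\alpha}=B=\bigcup_{b\in B}\bigl(\mcl^{eq}_{\emptyset}(b)\cap \partial T_{\alpha}\bigr)$, so the closure operator is the identity on finite subsets of the orbit and the pregeometry is degenerate, hence trivial. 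The one genuinely delicate step is the dichotomy: it rests entirely on locating, for $a$ distinct from all points of $B$, a vertex $v$ on the ray of $a$ that sits below every divergence and off every ray of $B$, so that Lemma \ref{lem:rist_orbits} applies to $\rist_G(v)\leq G_B$ and $(\star)$ produces a positive-measure piece of $\o(a/B)$; the remaining steps are bookkeeping.
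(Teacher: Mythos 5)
Your proposal is correct, and its smallness argument is essentially the paper's (both cover $\partial T_{\alpha}\setminus A$ by boundaries of subtrees below vertices lying off the rays of $A$ and then quote Lemma~\ref{lem:rist_orbits}), but your rank-one/pregeometry argument runs along a genuinely different track. The paper never leaves the group: for $\delta\notin F$ it picks a level $m$ where the rays separate and splits the rigid level stabilizer as $\rist_G(m)\simeq \rist_G(y_m)\times\prod_{v\neq y_m}\rist_G(v)$, with the factor at $y_m$ inside $G_F$ and all the others inside $G_{\delta}$, so that $G_{\delta}G_F$ contains the \emph{open} subgroup $\rist_G(m)$ and hence has positive Haar measure --- independence straight from Definition~\ref{mu-independence}; for the negative direction it shows $\mu_G(G_F)=0$ because $G_F$ contains no $\rist_G(n)$ (normality of $\rist_G(n)$ plus level transitivity force it to act without fixed points on deep levels) while positive-measure subgroups are open and the $\rist_G(n)$ form a basis at $1$. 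You instead transfer everything to the boundary via Theorem~\ref{description_mu_independence} (legitimately, by the compact case of Remark~\ref{description_mu_independence_in_some_exmaples}): you upgrade Lemma~\ref{lem:rist_orbits} to the observation that the finitely many $\rist_G(v)$-orbits on $\partial T^v_{\alpha}$ are clopen, prove that a quasi-invariant measure $\nu$ on $\partial T_{\alpha}$ has full support, and conclude that $\o(a/B)\supseteq \rist_G(v)\cdot a$ is $\nu$-non-null. So both proofs use a single rigid vertex stabilizer inside $G_B$, but the paper certifies largeness of $G_{\delta}G_F$ inside $G$, whereas you certify largeness of the orbit inside $\partial T_{\alpha}$. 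What each buys: the paper's route is more self-contained (only the Haar measure on $G$; no boundary measure, no Borel* hypotheses; Lemma~\ref{lem:rist_orbits} is needed only for smallness), while yours makes Lemma~\ref{lem:rist_orbits} the single engine for all three claims and isolates two pleasant reusable facts (clopen rigid-stabilizer orbits, full support of quasi-invariant measures under a minimal action).

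Two small points of slack, neither fatal. First, the parenthetical witness $B=\{a\}$ for $\mR(a/\emptyset)\geq 1$ needs $\nu(\{a\})=0$, which full support alone does not give; either argue that atoms are null (quasi-invariance, transitivity of the action, and finiteness of $\nu$), or simply drop the witness, since $a\notin\mcl(\emptyset)$ already yields $\mR(a/\emptyset)\geq 1$ by Remark~\ref{characterization_of_acl}(1) (or by Theorem~\ref{properties_mu-independence}(5)). Second, the full-support argument should state explicitly that $\nu$ is non-zero and that transitivity --- which you do prove --- is what makes the translates $gU$, $g\in G$, cover $\partial T_{\alpha}$ before compactness extracts a finite subcover.
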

\begin{proof}
Let $F\subseteq \partial T_{\alpha}$ be finite. Say that $F=\{\xi_1,\dots,\xi_n\}$ with $\xi_j=(x_1^j,x_2^j,\dots)$ and set 
\[
\Omega:=\{v\in T_{\alpha}\mid \exists i,j\; d(v,x_i^j)=1\text{ and }\forall i,j\; v\neq x_i^j \}.
\]

For any $\delta \in \partial T_{\alpha}\setminus F$, there is some $v\in \Omega$ and $i\geq 1$ such that the $i$-th coordinate of $\delta $ equals $v$. It follows that 
\[
\bigcup_{v\in \Omega}\partial T^v_{\alpha}=\partial T_{\alpha} \setminus F.
\]
Furthermore, $\rist_G(v)\leq G_{F}$ for any $v\in \Omega$, and in view of Lemma~\ref{lem:rist_orbits}, $\rist_G(v)$ has finitely many orbits on $\partial T^v_{\alpha}$. The subgroup $\grp{\rist_G(v)\mid v\in \Omega}$ of $G_{F}$ therefore has countably many orbits on $\partial T_{\alpha} \setminus F$. We deduce that $G_{F}$ has countably many orbits on $\partial T_{\alpha}$. Hence, the action of $G$ on $\partial T_{\alpha}$ is small.

Let us now show that the $\mu$-structure $(\partial T_{\alpha},G)$ has $\mu$-rank $1$ and forms a pregeometry. We argue that for $\delta \in \partial T_{\alpha}$ and $F\subset \partial_{\alpha} T$ finite, $G_{\delta}G_{F}$ contains an open set if $\delta \notin F$. Let $F=\{\xi_1,\ldots,\xi_n\}$. Write $\delta=(y_1,y_2,\ldots)$ and $\xi_i=(x_1^i,x_2^i,\ldots)$ for $i=1,\ldots,n$. Since $\xi_1,\ldots,\xi_n,$ and $\delta$ are distinct, there is a level $m$ such that $y_m$ and $x_m^1,\ldots,x_m^n$ are distinct in $V_m$. Then $\rist_G(y_m)\subset G_{F}$ and $\rist_G(v)\subset G_{\delta}$ for all $v\neq y_m\in V_m$. Hence, $\rist_G(m)\subset G_{\delta}G_F$, so $G_{\delta}G_F$ contains an open set. On the other hand, $G_F$ does not contain $\rist_G(n)$ for any $n$. Indeed, fix $n$ and observe that $\rist_G(n)$ is normal in $G$. The group $G$ acts transitively on each level, so if $\rist_G(n)$ fixes a vertex on level $k$, then it in fact fixes the entire level. We conclude that $\rist_G(n)$ acts without fixed points on all suitably deep levels. It now follows that $\rist_G(n)$ is not contained in $G_F$ for any $n$. Hence, $\mu_G(G_F)=0$, since positive measure subgroups are open. For $\delta \in \partial T_{\alpha}$ and $F\subset \partial T_{\alpha}$ finite, we conclude that
\begin{itemize}
	\item $\delta \in \mcl(F)$ if and only if $\delta\in F$; and
	\item $\delta\indo^{\mu} F$ if and only if $\delta\notin F$.
\end{itemize} 
Thus, $\mR(\partial T_{\alpha})=1$ and $(\partial T_{\alpha},\mcl^{eq})$ forms a pregeometry.
\end{proof}

\end{document}